\RequirePackage{fix-cm}
\documentclass[smallcondensed]{svjour3}
\smartqed  %

\usepackage{makeidx}  %

\usepackage{graphicx}
\usepackage{amsfonts, amssymb, amsmath, cite,bm}
\usepackage{algorithm}
\usepackage{algorithmic}
\usepackage{color}
\usepackage{multirow}
\usepackage{multicol}
\usepackage{microtype}
\usepackage[hidelinks,bookmarks=false,hypertexnames=false]{hyperref}
\allowdisplaybreaks
\newtheorem{thm}{Theorem}

\newtheorem{lem}{Lemma}

\newtheorem{defn}{Definition}
  
\newcommand{\eq}{\begin{equation}}
\newcommand{\nq}{\end{equation}}
\newcommand\eqa{\begin{eqnarray}}%
\newcommand\nqa{\end{eqnarray}}

\newcommand\bess{\begin{eqnarray*}}%
\newcommand\eess{\end{eqnarray*}}
\newcommand{\nn}{\nonumber}

\newcommand{\hb}{\hfill $\Box$}

\makeatletter
\def\hlinew#1{%
  \noalign{\ifnum0=`}\fi\hrule \@height #1 \futurelet
   \reserved@a\@xhline}
\makeatother
\makeatletter
\newenvironment{breakablealgorithm}
{%
	\begin{center}
		\refstepcounter{algorithm}%
		\hrule height.8pt depth0pt \kern2pt%
		\renewcommand{\caption}[2][\relax]{%
			{\raggedright\textbf{\ALG@name~\thealgorithm} ##2\par}%
			\ifx\relax##1\relax %
			\addcontentsline{loa}{algorithm}{\protect\numberline{\thealgorithm}##2}%
			\else %
			\addcontentsline{loa}{algorithm}{\protect\numberline{\thealgorithm}##1}%
			\fi
			\kern2pt\hrule\kern2pt
		}
	}{%
		\kern2pt\hrule\relax%
	\end{center}
}
\makeatother
\journalname{}
\renewcommand{\makeheadbox}{}
\begin{document}
\title{The Restricted Inverse Optimal Value Problem on Shortest Path under  $l_\infty$  Norm on Trees
\thanks{ The work of Q.Z. was supported by National Natural Science Foundation of China (Grant
No. 1230012046). The work of X.C. Guan was supported by National Natural Science Foundation of China (62573120). The work of P.M. Pardalos was conducted within the framework of the Basic Research Program at the National Research University Higher School of Economics (HSE).}
}
\titlerunning{Restricted Inverse Optimal Value Problem on Shortest Path }
\author{Qiao Zhang
\and Ying Wang
\and Xiao Li
\and Junhua Jia
\and Xiucui Guan
\and Panos M. Pardalos
}                     %
\authorrunning{Q. Zhang, Y. Wang et al.}%
\institute{Q. Zhang \and Y. Wang  \at Aliyun School of Big Data, Changzhou University, Changzhou 213164, China\\ \email{qiaozhang@cczu.edu.cn} \\X. Li \at Edwardson School of Industrial Engineering, Purdue University, 315 N. Grant Street, West Lafayette, IN 47907-2023, USA\\\email{li5868@purdue.edu} (Corresponding author: Xiao Li)\\
J.H. Jia \at School of Mathematics and Physics, Bengbu University, Bengbu 233030, China\\
X.C. Guan  \at School of Mathematics, Southeast University, Nanjing
210096, China\\ \email{xcguan@163.com} \\
P.M. Pardalos \at Center for Applied Optimization, University of Florida, Gainesville 32611, Florida, USA 	\at
LATNA, Higher School of Economics, Russia
}
\date{}
\maketitle

\begin{abstract}
We study the restricted inverse optimal value problem on shortest paths under the weighted $l_\infty$ norm on trees (RIOVSPT$_\infty$): adjust edge weights at minimum worst-link cost so that the shortest root--leaf path is lower-bounded by a given value $D$, a restriction on a prescribed path $P_0$. Unlike the $l_1$ version minimizing total adjustment cost, the $l_\infty$ objective controls the worst link, the criterion relevant in bandwidth pricing. We characterize feasibility through the minimum cost shortest path interdiction problem (MCSPIT$_\infty$), whose optimal solution $\hat w$ splits the instance at the first edge of $P_0$ into two independent subproblems. Relative to $\hat w$, three cases arise. The first two admit closed-form optimal solutions. The third reduces to a parametric subproblem SPIT$^0_\infty(M)$: minimize the length of $P_0$ on the gate subtree under the budget $C(\hat w)+M$ with all other root--leaf paths kept at least $D$. It is solved exactly by an iterative greedy procedure. On its value function $V(M)$, the predicate $V(M)\leq D$ is equivalent to the nonnegativity of truncated-bound margins, which are monotone, continuous and piecewise affine; the smallest such $M$ gives the optimal value, located by binary search over the edge-bound breakpoints with an exact affine threshold evaluation. The same margin test on the full tree gives a uniform algorithm for all cases. Both algorithms run in $O(n\log n)$ time. Experiments on synthetic trees agree with a linear programming baseline and illustrate the two algorithms' distinct runtime profiles.

\keywords{Inverse optimal value problem; Shortest root--leaf path; Tree; Weighted $l_\infty$ norm; Budget search.}

\end{abstract}

\section{Introduction}

The inverse shortest path problem, denoted by \textbf{(ISP)}, is one of
the earliest inverse problems in network optimization, dating back to
Burton and Toint \cite{BT-MP-ISP-1992}.
Consider a finite weighted graph $G=(V,E,w)$ with node set $V$ and edge
set $E$. Let $P_j$ ($j=0,1,\ldots,r-1$) be all simple paths connecting the
source $s$ and the sink $t$ in $G$ with $P_0$ prescribed. For a vector
$f$ and a path $P_j$, let $f(P_j):=\sum_{e\in P_{j}}f(e)$ for short.
The problem \textbf{(ISP)} seeks a new weight with minimum adjusting cost  
under some measurement such that $P_0$ becomes a shortest $s$--$t$ path
under the new weights.
Then the  problem \textbf{(ISP)} \cite{BT-MP-ISP-1992} admits the following mathematical model.
\begin{eqnarray}
&\min_{\bar w}&\|\bar w-w\| \nonumber\\
\textbf{(ISP)} &s.t.&  \bar w(P_0)\leq  \bar w(P_j),j=1,2,\cdots, r-1,\nn\\
&&\bar w(e)\geq 0, e\in E.\nonumber
\end{eqnarray}

For \textbf{(ISP)} under the $l_2$ norm, Burton and Toint
\cite{BT-MP-ISP-1992} designed a solution algorithm. It builds upon the
Goldfarb--Idnani method for convex quadratic programming and assumes
mutually independent edge costs. They later generalized the algorithm
to correlated edge costs \cite{BT-MP-ISP-1994}. Under the $l_1$ norm,
Zhang et al. \cite{ZM-ZFOR-ISP-1995} formulated \textbf{(ISP)} as a
linear program of special structure and solved it by column generation whose columns are produced by shortest-path algorithms. Xu and Zhang
\cite{XZ-JJIAM-WSP} observed that the feasible adjusted weight vectors
form a polyhedral cone. They described its extreme directions
algebraically and graphically and related the problem to the minimum
cutset problem. Under the Hamming distance, Zhang and Wang
\cite{ZW-JHU-ISPH}  established NP-hardness by a polynomial-time
reduction from the knapsack problem.

A different setting arises in the inverse optimal value problem on shortest path \textbf{(IOVSP)}, where the information given in advance is a target objective value rather than a desired solution. It has drawn less attention  and  comparatively is harder to solve than the problem \textbf{(ISP)}.  Formally, on  the weighted graph $G=(V,E,w)$,
 the problem \textbf{(IOVSP)} aims at adjusting the weight vector to minimize the total adjustment cost under some measurement such that the length of the shortest path equals a prescribed value $D$. The mathematical model of the problem \textbf{(IOVSP)} \cite{CH-N-ISPL} can be stated as follows. 
\begin{eqnarray}
&\min_{\bar w}&\|\bar w-w\| \nonumber\\
\textbf{(IOVSP)} &s.t.& \min_{0\leq j\leq r-1} \bar w(P_j)=  D, \nn\\
&&\bar w(e)\geq 0, e\in E.\nonumber
\end{eqnarray}
Under the weighted $l_1$ norm, Cui and Hochbaum \cite{CH-N-ISPL}
generalized \textbf{(IOVSP)} to the inverse shortest path lengths problem
\textbf{(ISPL)}, which imposes target distances on several source--sink
paths simultaneously. They established its NP-hardness on general
graphs, and identified polynomially solvable cases through a lower
bound relaxation.For linear programming, Ahmed and Guan \cite{AG-MP-IOVP} introduced the
inverse optimal value problem \textbf{(IOVLP)}, which seeks a cost
vector making the optimal value of the resulting \textbf{(LP)} as close
as possible to a desired value, and proved it NP-hard. Lv et al.
\cite{Lv-JCAM-IOVP} reformulated \textbf{(IOVLP)} as a bilevel program
and converted it, via the Kuhn--Tucker conditions, into an ordinary
nonlinear program, for which they developed an exact penalty method with
convergence analysis.

Based on the bandwidth pricing problem of Paleologo and Takriti
\cite{PT-ANV-BT}, Zhang et al. \cite{ZhangQ22} introduced a restricted
version of \textbf{(IOVSP)}: the restricted inverse optimal value problem
on shortest path on trees, denoted by \textbf{(RIOVSPT)}. Let $T=(V, E, w)$ be an edge-weighted tree rooted at $v_1$, where $V:=\{v_1, v_2, \cdots, v_n\}$ and  $E:=\{e_2, e_3, \cdots, e_{n}\}$ are the sets of nodes and edges, respectively. Let $Y:=\{t_0,t_1,\ldots,t_{r-1}\}$ be the set of rooted leaves. Let $w(e)$, $l(e)$ and $u(e)$ be the  original weight, the lower and upper bounds of the adjusted weight for an edge $e\in E$, respectively,  where $l(e),w(e),u(e)$ are finite real numbers satisfying $l(e)\leq w(e)\leq u(e)$.  %
Let $c(e)>0$ be the cost of adjusting the weight of edge $e\in E$. Denoted by $P_{v_i, v_j}$ the unique path from $v_i$ to $v_j$ on the tree $T$ and record $P_k:=P_{v_1, t_k}$ as the ``root-leaf path" of the leaf node $t_k(k=0,1,2,\cdots,r-1)$ in brief. 
The problem \textbf{(RIOVSPT)} aims at adjusting the weights of some edges to minimize the total adjustment cost under some measurement such that the length of the shortest root-leaf path of the tree is lower-bounded by a given value $D$, while the length of the given path $P_0$ is required to be exactly $D$. The mathematical model can be stated below.
\begin{eqnarray}
&\min_{\bar w}&\|\bar w-w\| \nonumber\\
\textbf{ (RIOVSPT) }&s.t.&\bar w(P_k)\geq D, t_k\in Y\setminus \{t_0\}, \nonumber\\
&&  \bar w(P_0)=D,\nonumber\\
&& l(e)\leq \bar w(e)\leq u(e), e\in E.\nonumber
\end{eqnarray}
Notice that the feasible region of \textbf{(RIOVSPT)} can be regarded as the intersection of those of \textbf{(ISP)} and \textbf{(IOVSP)} restricted on trees.

 Under the weighted $l_1$ norm, Zhang et al. \cite{ZhangQ22} designed an $O(n^2)$ time algorithm for the problem \textbf{(RIOVSPT)} on trees, together with a linear time algorithm under the unit $l_1$ norm.
Similar to the definition of the problem \textbf{(RIOVSPT)},
Zhang et al. \cite{ZGP-LBIOVMST-JGO,ZGZ-IOVMST-OL} proposed the inverse optimal value problem on minimum spanning tree \textbf{(RIOVMST)}. It requires not only that a given spanning tree $T^0$ becomes a minimum spanning tree, but also that its weight equals a prescribed value $K$. They measured the adjustment cost under the unit $l_{\infty}$ norm and developed two strongly polynomial time algorithms with running time $O(|V||E|)$ for the unbounded case \cite{ZGZ-IOVMST-OL} and the lower bounded case \cite{ZGP-LBIOVMST-JGO}, respectively.
Additionally, Wang et al. \cite{WGZ-CIOVMST-JOCO} proposed an $O(|V||E| \log |E|)$ time algorithm for the bounded problem  by measuring the adjustment cost under the bottleneck Hamming distance. More recently, Jia et al. \cite{Jia2023}
studied the restricted bounded problem under the weighted $l_{\infty}$
norm. Their strongly polynomial combinatorial algorithm runs in
$O(|E|^{2}|V|)$ time, which they improved to $O(|E|^{2}\log|V|)$ by a
binary search method. Under the weighted $l_{1}$ norm, Jia et al. \cite{Jia2024} formulated
the problem as a linear program and solved it combinatorially, via LP
duality and a binary search over a critical value.   They \cite{Jia2026} further studied the restricted inverse optimal value problem on linear programming under the weighted $l_1$ norm.

Zhang et al. \cite{ZhangQ20,ZhangQ21} considered the minimum cost shortest path interdiction problem by upgrading edges on trees \textbf{(MCSPIT)}, in which the restriction on the path $P_0$ in the problem \textbf{(RIOVSPT)} is dropped. The problem \textbf{(MCSPIT)} aims at minimizing the total cost of upgrading some edges under  some measurement such that the length of the shortest path of the tree is lower-bounded by a given value $D$.  The mathematical model can be stated as follows.
\begin{eqnarray}
&\min_{\tilde w}& \|\tilde w-w\| \nonumber\\
\textbf{ (MCSPIT) }&s.t.&\tilde w(P_k)\geq D, t_k\in Y, \nonumber\\
&& w(e)\leq \tilde w(e)\leq u(e), e\in E.\nonumber
\end{eqnarray}

Under the weighted $l_1$ norm, Zhang et al. \cite{ZhangQ20} gave an
$O(n^2)$ primal-dual algorithm for \textbf{(MCSPIT$\bm{_1}$)}  and a
linear-time algorithm under the unit $l_1$ norm,respectively. Under the Hamming
distance, they \cite{ZhangQ21} proved the weighted version
NP-hard and gave an $O(n^4\log n)$ algorithm for the unit case  based on
dynamic programming algorithm. Subsequently, Lei et al. \cite{YiL22} 
improved this bound to $O(n^3\log n)$. Under the weighted $l_\infty$
norm, Zhang and Li \cite{ZhangL2025} designed a $O(n\log n)$ algorithm
for \textbf{(MCSPIT$\bm{_\infty}$)}. Its optimal solution serves as the
initial solution of this paper.

Interdiction problems on trees with respect to the sum of root-leaf distances have also attracted much attention recently. Zhang et al. \cite{ZGJQ22} studied the problem   by upgrading edges
or nodes on trees, giving $O(n)$ algorithms under the weighted $l_1$
norm and proving NP-hardness under the weighted Hamming distance. Li et al. \cite{Li2024} further imposed a cardinality constraint on the
upgraded edges and gave $O(n\log n)$ binary-search algorithms. Li et
al. \cite{Li2025} studied a double interdiction version with both a
budget constraint and a lower bound on the shortest root--leaf distance,
proving it NP-hard in general and giving a linear-time algorithm for the
single-upgrade case. A systematic summary of this line can be found in
the recent monograph chapter \cite{Xiao2025}.

When the weighted $l_\infty$ norm is adopted as the measurement $\|\cdot\|$, the problems \textbf{(RIOVSPT$\bm{_\infty}$)} and \textbf{(MCSPIT$\bm{_\infty}$)} can be formulated in the following forms, respectively.
\begin{eqnarray}
&\min_{\tilde w}& C(\tilde w):=\max_{e\in E}c(e)|\tilde w(e)-w(e)| \nonumber\\
\textbf{RIOVSPT$\bm{_\infty}$}\bm{(T)} &s.t.&\tilde w(P_k)\geq D, t_k\in Y\setminus\{t_0\}, \nonumber\\
&& \tilde w(P_0)=D,\label{eq-RIOVSPTinfty}\\
&& l(e)\leq \tilde w(e)\leq u(e), e\in E.\nonumber
\end{eqnarray}
\begin{eqnarray}
&\min_{\hat w}& C(\hat w):=\max_{e\in E}c(e)|\hat w(e)-w(e)| \nonumber\\
\textbf{MCSPIT$\bm{_\infty}$}\bm{(T,w,c,u,D)}&s.t.&\hat w(P_k)\geq D, t_k\in Y, \label{eq-MCSPITinfty}\\
&& w(e)\leq \hat w(e)\leq u(e), e\in E.\nonumber
\end{eqnarray}

The weighted $l_\infty$ measurement adopted in the problem \textbf{(RIOVSPT$\bm{_\infty}$)} is also well motivated in the  bandwidth pricing scenario in Fig.\ref{Fig-Background}\cite{ZhangQ22}. In practice, repricing a link is never free. Changing the quoted price
of a link $e$ by one unit incurs a penalty $c(e)$. This penalty
reflects the sensitivity of the existing contracts through that link.
It also captures the risk of customer churn and the attention from
regulators. Minimizing the total cost may then pile a drastic change
onto a few links. Such a change forces the renegotiation of the affected contracts,
or even violates service-level agreements. What the provider really
cares about is therefore the worst-affected link rather than the total
bill. This makes the $l_\infty$ objective
$C(\tilde w):=\max_{e\in E}c(e)|\tilde w(e)-w(e)|$ the natural choice.
It controls the worst weighted adjustment directly and guarantees that
no single link is repriced beyond tolerance. It need not equalize the
adjustments, nor does it impose an externally specified threshold unless
one is added separately.
The pricing requirements are unchanged. Every side route must be priced at least $D$. A cheaper one would
undercut the reference price and open an arbitrage. The designated
route $P_0$ is reserved for emergency communication and must be priced
at exactly $D$. This keeps it as cheap as any alternative, so that it
remains usable when an emergency occurs. 
Under the worst-link objective, these requirements no longer decouple
from the cost. Adjusting the edges of $P_0$ toward this target directly moves the
worst weighted deviation on the shared links.  Resolving this coupling is precisely the task of
\textbf{(RIOVSPT$\bm{_\infty}$)}.

\begin{figure}[!htbp]
	\centering
	\includegraphics[totalheight=2in]{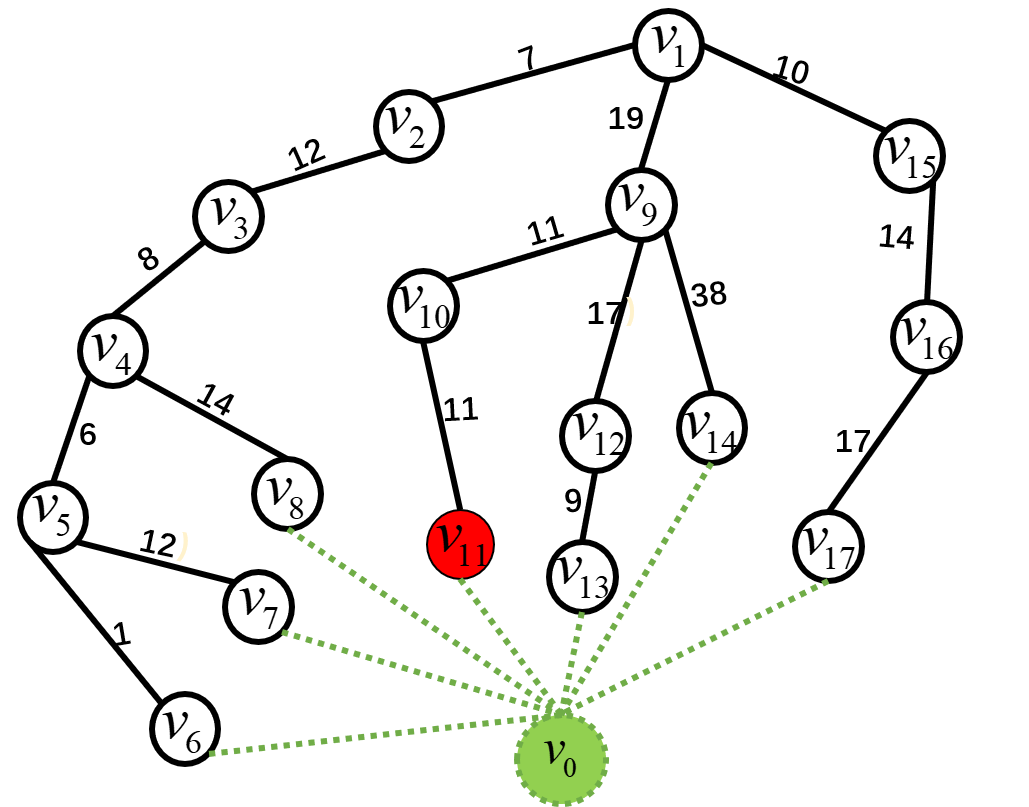}
	\caption{A special bandwidth
network between the cities $v_1$ and
$v_0$ with prices on the links.  The route through $v_{11}$ is reserved for emergency communication.} \label{Fig-Background}
\end{figure}

Zhang et al.~\cite{ZhangQ22}  solved the problem \textbf{(RIOVSPT)} under the
weighted $l_1$ norm  with  an $O(n^2)$ algorithm based on minimum cost
cuts. This approach suits the aggregate cost criterion. But the total
cost does not control the largest single-edge deviation: a drastic
change may hide in one link.  We therefore study the weighted $l_\infty$ objective, which
controls the maximum weighted adjustment while the bounds and path
constraints stay unchanged.

The change of objective is more than a modeling nuance. 
On the modeling side, a fixed maximum-adjustment budget truncates the
feasible interval of every edge.  It is
not a separable per-edge cost. On the algorithmic side,
the $l_1$ method works through minimum-cost cuts precisely because a
total cost decomposes edge by edge, whereas the worst-case objective
does not. 
New machinery is therefore necessary.

In this paper, we solve the problem exactly in $O(n\log n)$ time. We
analyze the structural properties of the problem, reduce the hard
case to a parametric subproblem, and locate the optimal value by a
binary search over the edge-bound breakpoints. The same machinery
applies verbatim to the full tree, yielding a uniform algorithm with
the same complexity.

The rest of the paper is organized as follows. Section~2 presents the details of the solution in two ways. Section~2.1 introduces the notation, recalls the
feasibility conditions, and establishes the decomposition used in the
case analysis. Sections~2.2--2.4 derive the optimal values and
solution constructions for the three cases relative to an auxiliary
interdiction solution; the budget characterization is developed in
Section~2.4. Section~\ref{subsec-uniform} applies this
characterization directly to the full tree and gives the uniform
algorithm. Section~\ref{sec:computational-study} reports
numerical experiments on correctness, running time, and case-specific
behavior. Section~\ref{sec:conclusion} concludes the paper.

\section{Solving the problem (RIOVSPT$_{\infty}$)}
In this section, we develop an exact combinatorial algorithm for the problem \textbf{(RIOVSPT$\bm{_\infty}$)}. We first introduce some necessary definitions and notation, most of which are inherited from \cite{ZhangQ20,ZhangQ21}. Based on these, we  characterize the feasibility of the problem. Then, starting from an optimal solution of the problem \textbf{(MCSPIT$\bm{_\infty}$)}, we show that the problem can be decomposed at the first edge of $P_0$ into two independent subproblems, so that it suffices to solve the subproblem on the part containing $P_0$.

\subsection{Definitions and feasibility}
We first recall the concepts of the layer of a node and the layer number of an edge introduced in \cite{ZhangQ20}, which decompose the tree into edge-disjoint chains.

\begin{defn}\cite{ZhangQ20}\label{defn-tab}
	For $e_j=(v_i, v_j)$, we call $v_i$  the \textbf{father} of $v_j$, denoted by $father(v_j)=v_i$. Define $Layer(v_1):=1$ and the \textbf{Layer} of any other nodes $v\in {V\setminus \{v_1\}}$ as
	$$Layer(v):=\left\{
	\begin{array}{lr}
	Layer(father(v)), & \text{if }\ deg(v)\leq{2},\\
	Layer(father(v))+1, & \text{if }\ deg(v)>2.
	\end{array} \right.$$
\end{defn}
\begin{defn}\cite{ZhangQ20}\label{defn-LN-edge}
	For each edge $e_j=(v_i,v_j)\in E$ with $Layer(v_i)\leq Layer(v_j),$ we define $LN(e_j):=Layer(v_i)$ as the \textbf{layer number} of the edge $e_j$.
\end{defn}
Denote by $V^*:=\{v\in V\setminus\{v_1\}\mid deg(v)>2\}$ the set of branching nodes of $T$.
Based on the node layers, we next define the critical descendants of a
branching node. They serve as the endpoints of the chains that
decompose the tree.

\begin{defn}\cite{ZhangQ20}
	For a  node $\bar v\in V^*\cup\{v_1\}$, we define a set $CD(\bar v)$ of \textbf{critical descendants}.
	Let $\bar v$ be on the path from $v_1$ to $v\in V^*\setminus\{v_1\}\cup Y$. If $v\in V^*\setminus\{v_1\}$ and $Layer(v)=Layer(\bar v)+1$, then $v\in{CD(\bar v)}$; if $v\in Y$ and $Layer(v)=Layer(\bar v)$, then $v\in{CD(\bar v)}$. Correspondingly, if $v\in CD(\bar v)$,  we call $\bar v$ the \textbf{critical ancestor} of $v$, denoted by $CA(v):=\bar v$.
\end{defn}
\begin{figure}
    \centering
\includegraphics[width=0.5\linewidth]{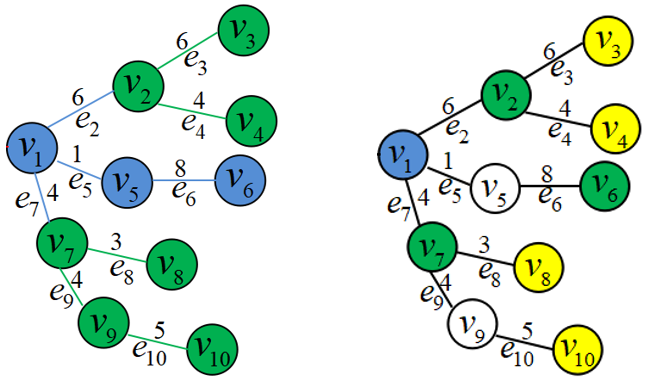}
    \caption{A rooted tree with adjustment coefficients $c(e)$ on the edges. Left: blue and green nodes have layers 1 and 2, respectively. Right: the green nodes are the critical descendant of $v_1$.}
    \label{LNCCV}
\end{figure}

For instance, in Fig.\ref{LNCCV}, $CD(v_1):=\{v_2, v_6, v_7\}$ and $CA(v_2):=v_1$.

With the critical ancestors defined above, the tree can be decomposed into chains as follows.

\begin{defn}\cite{ZhangQ20}\label{defn-chain}
	For any  node $v\in V^*\backslash\{v_1\}\cup Y$, define $\chi_v:=P_{CA(v),v}$ as the chain from $CA(v)$ to $v$. %
\end{defn}

Before constructing an optimal solution, we first characterize the feasibility of the problem \textbf{(RIOVSPT$\bm{_\infty}$)}. Notice that feasibility is independent of the measurement $\|\cdot\|$.
It concerns only the existence of a weight vector within the bounds
that meets the path-length requirements. Hence the characterization,
and its proof  are identical to those for the $l_1$ norm
(\cite{ZhangQ22}, Theorem~2).

\begin{thm}
\label{thm-fea=RIOVSPT_infty}
	The problem \textbf{ (RIOVSPT$\bm{_{\infty}}$)} %
	is feasible if and only if \textbf{(1)} $l(P_0)\leq D\leq \min_{t_k\in Y}u(P_k)$ and \textbf{(2)} $l(P_0 )\leq \min_{t_k\in Y}(l(P_k\cap P_0)+u(P_k\backslash P_0))$ hold.	
\end{thm}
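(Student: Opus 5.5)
Since feasibility involves only the constraint set of \eqref{eq-RIOVSPTinfty} and not the objective, I will argue directly on the polytope $\{\tilde w:\ l\le \tilde w\le u,\ \tilde w(P_0)=D,\ \tilde w(P_k)\ge D\ \forall t_k\neq t_0\}$. Necessity and sufficiency are treated separately. For sufficiency I will build one explicit feasible vector.

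\emph{Necessity.} Let $\tilde w$ be feasible. Then $l(P_0)\le \tilde w(P_0)=D$. For every leaf $t_k$ we have $D\le \tilde w(P_k)\le u(P_k)$; for $k=0$ this follows from $\tilde w(P_0)=D$. Hence $D\le\min_{t_k\in Y}u(P_k)$, which is condition (1). For condition (2), fix $t_k$ and split $P_k$ into $P_k\cap P_0$ (a common prefix from the root) and $P_k\setminus P_0$. Then
$$D\le \tilde w(P_k)=\tilde w(P_k\cap P_0)+\tilde w(P_k\setminus P_0)\le \tilde w(P_k\cap P_0)+u(P_k\setminus P_0).$$
Also $\tilde w(P_k\cap P_0)=D-\tilde w(P_0\setminus P_k)\le D-l(P_0\setminus P_k)$. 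This only yields a trivial inequality, so I will use the other direction instead. From $\tilde w\ge l$ on $P_0\setminus P_k$ and $\tilde w(P_0)=D\le \tilde w(P_k)$ we get
$$l(P_0\setminus P_k)+\tilde w(P_0\cap P_k)\le \tilde w(P_0)\le \tilde w(P_k\cap P_0)+u(P_k\setminus P_0).$$
Therefore $l(P_0\setminus P_k)\le u(P_k\setminus P_0)$. Adding $l(P_k\cap P_0)$ to both sides gives (2).

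\emph{Sufficiency.} Take $\tilde w=u$ on every edge off $P_0$. On $P_0$, choose $\tilde w$ with $l\le\tilde w\le u$ and $\tilde w(P_0)=D$. This is possible because $l(P_0)\le D\le u(P_0)$, by (1) applied to $k=0$. The difficulty is to place the values on $P_0$ so that every side path still has length at least $D$. I will fill greedily \emph{from the leaf end $t_0$ backwards toward the root}: start from $\tilde w=l$ on $P_0$, then raise edges to $u$ one at a time, beginning at the edge incident to $t_0$, and stop partway on one edge when the total reaches $D$. This gives a suffix of $P_0$ at $u$, a prefix at $l$, and one fractional edge.

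For a leaf $t_k\ne t_0$, write $A=P_k\cap P_0$ (a prefix) and $B=P_0\setminus P_k$ (the remaining suffix). The greedy order fills $B$ before $A$. There are two cases. If the fill does not reach $A$, then $\tilde w(A)=l(A)$ and $\tilde w(B)=D-l(A)$. Then $\tilde w(P_k)=l(A)+u(P_k\setminus P_0)$, and $D-l(A)=\tilde w(B)\le$ the value needed. To get $\tilde w(P_k)\ge D$ we need $l(A)+u(P_k\setminus P_0)\ge D$. This is not exactly (2); condition (2) gives $l(P_0)\le l(A)+u(P_k\setminus P_0)$, which is weaker. So this case is \emph{the main obstacle}, and it needs to be checked against the statement. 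If the fill does reach $A$, then $B$ is at $u$, so $\tilde w(A)=D-u(B)$. Hence $\tilde w(P_k)=D-u(B)+u(P_k\setminus P_0)$, which requires $u(P_k\setminus P_0)\ge u(B)$. Again this is not literally (2).

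I therefore expect to reinterpret (2) as the $l_1$ paper does (\cite{ZhangQ22}, Theorem~2), and to check its precise form before finalizing the construction. A likely fix is a different filling rule: choose $\tilde w$ on $P_0$ to be the \emph{pointwise maximal} feasible allocation, i.e.\ fill from the root down instead. Then for each $k$, $\tilde w(A)\ge\min(u(A),\,D-u(B))$. Both resulting cases then reduce to $D\le u(P_k)$ (from (1)) or to $\tilde w(A)+u(P_k\setminus P_0)\ge \tilde w(A)+\tilde w(B)$. The latter holds once $\tilde w(B)$ is as small as possible, namely $\tilde w(B)=\max(l(B),D-u(A))$. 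This case analysis, using (1) and (2) as stated, is the core verification.
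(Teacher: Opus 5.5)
Your necessity argument is correct once the abandoned first attempt is deleted: from $\tilde w(P_0)=D\le\tilde w(P_k)$ and $l\le\tilde w\le u$ you get $l(P_0\setminus P_k)\le u(P_k\setminus P_0)$, and this is (2). For sufficiency, the construction you finally settle on is the paper's own constructive step, as it appears in Lemma~\ref{lem-construct} and Lines 19--24 of Algorithm~\ref{alg-uniform}. That construction sets $\tilde w=u$ off $P_0$ and raises $P_0$ from $l$ starting at the root. It works with (1) and (2) exactly as stated, so no reinterpretation of (2) is needed. As written, however, your sufficiency part has two defects.

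First, filling from the leaf end is false, not merely unverified, so drop it rather than keep it as the main plan. Take $P_0=e_1e_2$, $P_k=e_1e_3$, $l\equiv 0$, $u(e_1)=u(e_2)=10$, $u(e_3)=1$ and $D=10$. Conditions (1) and (2) hold. Leaf-first filling gives $\tilde w(P_k)=0+1<D$, while root-first filling gives $10+1\ge D$.

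Second, write $A=P_k\cap P_0$ and $B=P_0\setminus P_k$ as you do. The bound $\tilde w(A)\ge\min\{u(A),D-u(B)\}$ you state for root-first filling is the leaf-first quantity, and it is too weak to finish. Using it, you would need $u(P_k\setminus P_0)\ge u(B)$, which (2) does not give. Root-first filling actually yields the identity $\tilde w(A)=\min\{u(A),\,D-l(B)\}$, equivalently $\tilde w(B)=\max\{l(B),\,D-u(A)\}$, which you do write at the end. The proof then splits into two cases:
\begin{itemize}
\item If $\tilde w(A)=u(A)$, then $\tilde w(P_k)=u(P_k)\ge D$ by (1).
\item Otherwise the filling stops inside $A$, so $\tilde w(B)=l(B)$ and $\tilde w(P_k)=D-l(B)+u(P_k\setminus P_0)\ge D$. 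This holds by (2) in the form $l(P_0\setminus P_k)\le u(P_k\setminus P_0)$ that you derived for necessity.
\end{itemize}
With these two repairs your argument is complete and coincides with the paper's.
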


\begin{corollary}\label{cor-fea=RIOVSPT1}
	If $D<l(P_0)$ or $ D>\min_{t_k\in Y}u(P_k)$ or $l(P_0)>\min_{t_k\in Y}(l(P_k\cap P_0)+u(P_k\backslash P_0))$, then the problem \textbf{ (RIOVSPT$\bm{_{\infty}}$)} is infeasible.
\end{corollary}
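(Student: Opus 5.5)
The corollary is the contrapositive of the necessity direction of Theorem~\ref{thm-fea=RIOVSPT_infty}. I would therefore argue by contradiction: suppose the problem \textbf{(RIOVSPT$\bm{_\infty}$)} is feasible with a feasible weight vector $\tilde w$, and show that all three displayed inequalities then hold. Since Theorem~\ref{thm-fea=RIOVSPT_infty} already states the equivalence, invoking it would suffice. To keep the corollary self-contained, however, I would redo the (easy) necessity direction directly from the constraints in \eqref{eq-RIOVSPTinfty}.

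First, the bounds $l(e)\le \tilde w(e)\le u(e)$, summed over $P_0$, give $l(P_0)\le \tilde w(P_0)=D$. This excludes $D<l(P_0)$.

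Second, for every leaf $t_k$ I need $D\le u(P_k)$. For $k\neq 0$ this follows from $D\le \tilde w(P_k)\le u(P_k)$. For $k=0$ it follows from $D=\tilde w(P_0)\le u(P_0)$. Together these give $D\le \min_{t_k\in Y}u(P_k)$, which excludes the second condition.

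Third, for each $t_k$ I would split $P_k$ into the common part $P_k\cap P_0$ and the remainder $P_k\setminus P_0$. Using $\tilde w(P_0)=D\le \tilde w(P_k)$ gives
\[
\tilde w(P_0\setminus P_k)\le \tilde w(P_k\setminus P_0)\le u(P_k\setminus P_0).
\]
Adding $\tilde w(P_k\cap P_0)$ to both sides would let me compare with $l(P_0)$. The issue is that the left side then equals $\tilde w(P_0)\ge l(P_0)$, while the right side becomes $\tilde w(P_k\cap P_0)+u(P_k\setminus P_0)$, not $l(P_k\cap P_0)+u(P_k\setminus P_0)$. Getting condition (2) in exactly the form stated would therefore require a more careful argument, for example replacing the common-part weights by their lower bounds, as in the proof of (\cite{ZhangQ22}, Theorem~2).

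This third step is the only delicate one, so I would not rederive it. Instead I would rely on Theorem~\ref{thm-fea=RIOVSPT_infty} (equivalently \cite{ZhangQ22}, Theorem~2), whose necessity part yields (2) for any feasible instance. Since feasibility does not depend on the norm, that result applies verbatim here. Negating the conjunction of (1) and (2) then gives exactly the three alternatives listed in the corollary, each of which implies infeasibility.
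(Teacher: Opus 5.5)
Your proposal is correct and matches the paper, which treats the corollary as the immediate contrapositive of the necessity direction of Theorem~\ref{thm-fea=RIOVSPT_infty}. The step you call delicate is actually a one-liner, so you could make the argument fully self-contained: from $\tilde w(P_0\setminus P_k)\le u(P_k\setminus P_0)$, use $l(P_0\setminus P_k)\le\tilde w(P_0\setminus P_k)$ and add $l(P_k\cap P_0)$ to both sides to obtain $l(P_0)\le l(P_k\cap P_0)+u(P_k\setminus P_0)$.
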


Raising the downgraded edges of a feasible solution of
\textbf{(RIOVSPT$\bm{_\infty}$)} back to the original weights yields a
feasible solution of \textbf{(MCSPIT$\bm{_\infty}$)} at no larger cost.
The following lemma records it, which will be invoked
repeatedly in the subsequent proofs.
\begin{lem}\label{lem-lift}
Let $\tilde w$ be a feasible solution of
\textbf{(RIOVSPT$\bm{_\infty}$)}. Then the vector
$\tilde w'(e):=\max\{\tilde w(e),w(e)\}(e\in E)$ is feasible for
\textbf{(MCSPIT$\bm{_\infty}$)} and satisfies
$C(\tilde w')\le C(\tilde w)$.
\end{lem}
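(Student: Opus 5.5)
We verify the two defining requirements of \textbf{(MCSPIT$\bm{_\infty}$)} in (\ref{eq-MCSPITinfty}) for $\tilde w'$ directly, and then compare the objective edge by edge. No earlier result beyond the models (\ref{eq-RIOVSPTinfty}) and (\ref{eq-MCSPITinfty}) and the standing assumption $l(e)\le w(e)\le u(e)$ is needed.

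\textbf{Box constraints.} Fix $e\in E$. By definition $\tilde w'(e)=\max\{\tilde w(e),w(e)\}\ge w(e)$, which gives the lower bound. For the upper bound, feasibility of $\tilde w$ in (\ref{eq-RIOVSPTinfty}) gives $\tilde w(e)\le u(e)$, and $w(e)\le u(e)$ holds by assumption. Hence $\tilde w'(e)\le u(e)$, so $w(e)\le \tilde w'(e)\le u(e)$.

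\textbf{Path constraints.} Since $\tilde w'\ge \tilde w$ componentwise, we have $\tilde w'(P_k)\ge \tilde w(P_k)$ for every leaf $t_k\in Y$. For $t_k\in Y\setminus\{t_0\}$, the constraint $\tilde w(P_k)\ge D$ of (\ref{eq-RIOVSPTinfty}) then yields $\tilde w'(P_k)\ge D$. For $t_0$, the equality $\tilde w(P_0)=D$ yields $\tilde w'(P_0)\ge D$. Thus every root--leaf path has length at least $D$ under $\tilde w'$, and $\tilde w'$ is feasible for \textbf{(MCSPIT$\bm{_\infty}$)}.

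\textbf{Cost.} For each $e$ we distinguish two cases. If $\tilde w(e)\ge w(e)$, then $\tilde w'(e)=\tilde w(e)$ and the deviations of $\tilde w'$ and $\tilde w$ on $e$ coincide. Otherwise $\tilde w'(e)=w(e)$ and the deviation of $\tilde w'$ on $e$ is $0$. In both cases $c(e)|\tilde w'(e)-w(e)|\le c(e)|\tilde w(e)-w(e)|$, using $c(e)>0$. Taking the maximum over $e\in E$ gives $C(\tilde w')\le C(\tilde w)$.

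There is no real obstacle here. The only point needing care is the upper bound $\tilde w'(e)\le u(e)$, which relies on the assumption $w(e)\le u(e)$.
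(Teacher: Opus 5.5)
Your proof is correct and follows the paper's argument essentially step for step: $w\le\tilde w'\le u$ for the bounds, componentwise domination $\tilde w'\ge\tilde w$ for all root--leaf paths including $P_0$, and the edgewise comparison $c(e)|\tilde w'(e)-w(e)|=c(e)\max\{\tilde w(e)-w(e),0\}\le c(e)|\tilde w(e)-w(e)|$ for the cost. Your explicit use of $w(e)\le u(e)$ for the upper bound simply spells out what the paper compresses into \emph{by definition}.
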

\begin{proof}
By definition, we have $w(e)\le\tilde w'(e)\le u(e)$ for $e\in E$. Additionally, 
$\tilde w'(P_k)\ge\tilde w(P_k)\ge D$ for every
$t_k\in Y\setminus\{t_0\}$. What's more, we have 
$\tilde w'(P_0)\ge\tilde w(P_0)=D$. Hence $\tilde w'$ is feasible for
\textbf{(MCSPIT$\bm{_\infty}$)}. Moreover, for $e\in E$, we have
\[
c(e)\bigl|\tilde w'(e)-w(e)\bigr|
=c(e)\max\{\tilde w(e)-w(e),0\}
\le c(e)\bigl|\tilde w(e)-w(e)\bigr|.
\]
Hence,  $C(\tilde w')\le C(\tilde w)$.\hb
\end{proof}

An immediate consequence is a universal lower bound: every feasible
solution of \textbf{(RIOVSPT$\bm{_\infty}$)} never costs
less than the problem \textbf{(MCSPIT$\bm{_\infty}$)}.
\begin{lem}\label{lem-lowbd-hard}
Let $\tilde w$ be a feasible solution of
\textbf{(RIOVSPT$\bm{_\infty}$)}, then $C(\tilde w)\geq C(\hat w)$.
\end{lem}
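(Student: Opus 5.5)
The plan is to combine Lemma~\ref{lem-lift} with the optimality of $\hat w$ for \textbf{(MCSPIT$\bm{_\infty}$)}. Here $\hat w$ denotes an optimal solution of \textbf{(MCSPIT$\bm{_\infty}$)}$(T,w,c,u,D)$, as fixed in the surrounding discussion. The argument is a two-line chain of inequalities, and no case analysis on the structure of the tree is needed.

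Let $\tilde w$ be any feasible solution of \textbf{(RIOVSPT$\bm{_\infty}$)}. First I will form the lifted vector $\tilde w'(e):=\max\{\tilde w(e),w(e)\}$ for $e\in E$. By Lemma~\ref{lem-lift}, $\tilde w'$ is feasible for \textbf{(MCSPIT$\bm{_\infty}$)} and $C(\tilde w')\le C(\tilde w)$. Second, since $\hat w$ minimizes $C$ over the feasible region of \textbf{(MCSPIT$\bm{_\infty}$)}, which contains $\tilde w'$, we get $C(\hat w)\le C(\tilde w')$. Chaining the two inequalities gives
\[
C(\hat w)\le C(\tilde w')\le C(\tilde w),
\]
which is the claim.

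The only point needing care is that \textbf{(MCSPIT$\bm{_\infty}$)} actually attains its minimum, so that $\hat w$ exists. Its feasible set is nonempty here, since it contains $\tilde w'$. It is also compact, being the box $\prod_{e}[w(e),u(e)]$ intersected with finitely many closed half-spaces. Since $C$ is continuous, a minimizer exists, consistent with the algorithm of \cite{ZhangL2025} producing one. With existence settled, the proof is immediate; I expect no real obstacle beyond stating this.
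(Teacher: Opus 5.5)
Your proof is correct and is the paper's argument: lift $\tilde w$ to $\tilde w'=\max\{\tilde w,w\}$ via Lemma~\ref{lem-lift}, then use the optimality of $\hat w$ for \textbf{(MCSPIT$\bm{_\infty}$)} to chain $C(\hat w)\le C(\tilde w')\le C(\tilde w)$. Your compactness remark on the existence of $\hat w$ is a harmless addition that the paper takes for granted, since it treats $\hat w$ as the output of the algorithm of \cite{ZhangL2025}.
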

\begin{proof}
By Lemma \ref{lem-lift}, the vector  $\tilde w'(e):=\max\{\tilde w(e),w(e)\}(e\in E)$ is feasible for
\textbf{(MCSPIT$\bm{_\infty}$)} and satisfies
$C(\tilde w')\le C(\tilde w)$. 
Since $\hat w$ is an optimal solution of
\textbf{(MCSPIT$\bm{_\infty}$)}, we have
$C(\hat w)\le C(\tilde w')\le C(\tilde w)$. Then $C(\tilde w)\ge
C(\hat w)$ holds for every feasible solution $\tilde w$ of
\textbf{(RIOVSPT$\bm{_\infty}$)}.\hb
\end{proof}

We next construct  an optimal solution. The starting point is
the problem \textbf{(MCSPIT$\bm{_\infty}$)}, in which the restriction on
$P_0$ is dropped. An optimal solution $\hat w$ of \textbf{MCSPIT$\bm{_\infty}$}$\bm{(T)}$
can be computed in $O(n\log n)$ time by the algorithm of Zhang and Li
\cite{ZhangL2025}. It satisfies $\hat w(P_k)\ge D$ for every
$t_k\in Y$.  In particular, $\hat w(P_0)\ge D$. A feasible solution of
\textbf{(RIOVSPT$\bm{_\infty}$)} in model (\ref{eq-RIOVSPTinfty}) must
meet more: $\tilde w(P_k)\ge D$ for every $t_k\in Y\setminus\{t_0\}$
and $\tilde w(P_0)=D$ simultaneously. If $\hat w(P_0)=D$, then
$\hat w$ already meets all the requirements and the following theorem shows that it is indeed optimal.
\begin{thm}\label{thm-P0D}
	Let $\hat w$ be an optimal solution of the problem \textbf{(MCSPIT}$\bm{_{\infty})}$. If $\hat w(P_0)=D$, then $\tilde w:=\hat w$ is also an optimal solution of the problem \textbf{(RIOVSPT$\bm{_{\infty}}$)}.
\end{thm}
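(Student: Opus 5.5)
The plan is to show two things: that $\hat w$ is feasible for \textbf{(RIOVSPT$\bm{_\infty}$)}, and that no feasible solution of that problem costs less than $C(\hat w)$. The second half is already Lemma~\ref{lem-lowbd-hard}, so nearly all the work is the feasibility check. Once feasibility is established, optimality follows at once from that lemma.

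For feasibility, I will check the constraints of model (\ref{eq-RIOVSPTinfty}) one at a time, using the constraints of model (\ref{eq-MCSPITinfty}) that $\hat w$ satisfies.

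\begin{enumerate}
\item \emph{The restriction on $P_0$.} The equality $\hat w(P_0)=D$ is exactly the hypothesis of the theorem.
\item \emph{The other root--leaf paths.} For every $t_k\in Y\setminus\{t_0\}$, the bound $\hat w(P_k)\ge D$ holds because $\hat w$ is feasible for \textbf{(MCSPIT$\bm{_\infty}$)}, which imposes it for all $t_k\in Y$.
\item \emph{The box constraints.} \textbf{(MCSPIT$\bm{_\infty}$)} gives $w(e)\le \hat w(e)\le u(e)$ for every $e\in E$. Combined with the standing assumption $l(e)\le w(e)$, this yields $l(e)\le \hat w(e)\le u(e)$.
\end{enumerate}

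The box constraints are the only point needing care. The two problems have different lower bounds, $l(e)$ versus $w(e)$, so the inclusion must be stated explicitly: the MCSPIT box is contained in the RIOVSPT box.

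With feasibility in hand, Lemma~\ref{lem-lowbd-hard} gives $C(\tilde w)\ge C(\hat w)$ for every feasible $\tilde w$ of \textbf{(RIOVSPT$\bm{_\infty}$)}. Since $\hat w$ is itself feasible and attains $C(\hat w)$, it is optimal. The objective functions of the two problems coincide, so no further comparison is needed.
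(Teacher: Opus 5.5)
Your proof is correct and follows essentially the paper's route. The paper also treats feasibility as immediate and gets optimality from the lifting argument of Lemma~\ref{lem-lift}, phrased as a contradiction; Lemma~\ref{lem-lowbd-hard}, which you invoke, is that same argument packaged. Your explicit check that $[w(e),u(e)]\subseteq[l(e),u(e)]$ spells out the step the paper calls obvious.
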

\begin{proof}
Feasibility is obvious. It remains to show the optimality of $\hat w$.
Suppose  there exists a feasible solution $\bar w$
of the problem \textbf{(RIOVSPT$\bm{_\infty}$)} with
$C(\bar w)<C(\hat w)$. By
Lemma~\ref{lem-lift}, the vector $w':=\max\{\bar w,w\}$ is feasible
for \textbf{(MCSPIT$\bm{_\infty}$)} and satisfies $C(w')\le C(\bar w)$.
Hence, $C(w')\leq C(\bar w)<C(\hat w)$, contradicting the
optimality of $\hat w$ for \textbf{(MCSPIT$\bm{_\infty}$)}. Therefore
$\tilde w:=\hat w$ is an optimal solution of
\textbf{(RIOVSPT$\bm{_\infty}$)}.\hb
\end{proof}
It remains to consider the case $\hat w(P_0)>D$, in which additional adjustment on the path $P_0$ is required. We show that in this case the problem can be decomposed at the first edge of $P_0$ into two independent subproblems.
For any edge $e_j=(v_i,v_j) \in E$, define $T_{e_j}:=P_{v_1,v_j}\cup T_{v_j}$, where $T_{v_j}$ is the subtree of $T$ rooted at $v_j$. Let $\ddot{e}:=(v_1,\ddot v)\in P_0$ be the \textbf{gate} edge, which is the first edge from the root $v_1$ on the path $P_0$, and denote by $\ddot T:=T_{\ddot e}=\{\ddot e\}\cup T_{\ddot v}$ in brief. Then the tree $T=(T\backslash \ddot T)\cup\ddot T$ is divided into two parts, and every root-leaf path of $T$ lies entirely in one of them: the path $P_0$, together with all the root-leaf paths ending in $\ddot T$, lies in $\ddot T$, while the remaining root-leaf paths lie in $T\backslash\ddot T$. Consequently, the problem \textbf{RIOVSPT$\bm{_\infty}$}$\bm{(T)}$ is decomposed into two independent subproblems \textbf{RIOVSPT$\bm{_\infty}$}$\bm{(T\backslash \ddot T)}$ and \textbf{RIOVSPT$\bm{_\infty}$}$\bm{(\ddot T)}$. Since $P_0$ is contained in $\ddot T$, the former subproblem involves only the lower-bound constraints on the path lengths. Its optimal solution can be obtained from an optimal solution of the problem \textbf{MCSPIT$\bm{_\infty}$}$\bm{(T\backslash \ddot T)}$: replacing a weight below $w(e)$ by $w(e)$ can only increase path lengths and cannot increase the adjustment objective, so that the two subproblems on $T\backslash\ddot T$ share the same optimal value.

\begin{thm}\label{thm-construct}
	Let $\ddot{ w}$ be an optimal solution of the subproblem \textbf{RIOVSPT$\bm{_\infty}$}$\bm{(\ddot T)}$ and $\hat w$ be an optimal solution of the problem \textbf{MCSPIT$\bm{_\infty}$}$\bm{(T\backslash \ddot T)}$. Then
	\begin{equation}
		\tilde w(e):=\left\{\begin{array}{ll}
			\hat w(e),&e\in T\backslash \ddot T,\\
			\ddot{ w}(e), &e\in \ddot T
		\end{array}\right.\label{eq-construct0}
	\end{equation}
	is an optimal solution of the problem \textbf{RIOVSPT$\bm{_\infty}$}$\bm{(T)}$, and the relevant optimal objective value is $C(\tilde w)=\max\{C(\hat w), C(\ddot w)\}$.
\end{thm}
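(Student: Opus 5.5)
The plan is to exploit the edge-disjoint decomposition $E=E(T\backslash\ddot T)\,\dot\cup\, E(\ddot T)$ together with the fact that every root--leaf path of $T$ lies entirely in one of the two parts. Because of this, both the constraints of model (\ref{eq-RIOVSPTinfty}) and the $l_\infty$ objective split cleanly between the two parts. We first prove that $\tilde w$ defined in (\ref{eq-construct0}) is feasible. Then we show that no feasible solution of \textbf{RIOVSPT$\bm{_\infty}$}$\bm{(T)}$ can cost less than $\max\{C(\hat w),C(\ddot w)\}$. Throughout, $C(\cdot)$ restricted to a subtree means the maximum over that subtree's edges; in particular $C(\tilde w)=\max\{C(\hat w),C(\ddot w)\}$ holds directly, because the maximum over a disjoint union of edge sets is the maximum of the two partial maxima.

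\textbf{Feasibility.} Take a leaf $t_k$. If $t_k\in\ddot T$, then $P_k\subseteq\ddot T$, so $\tilde w(P_k)=\ddot w(P_k)$. This value is $\ge D$ for $k\ne0$ and $=D$ for $k=0$, by feasibility of $\ddot w$ for the subproblem on $\ddot T$. If $t_k\notin\ddot T$, then $P_k\subseteq T\backslash\ddot T$ and $k\neq 0$, so $\tilde w(P_k)=\hat w(P_k)\ge D$ by feasibility of $\hat w$ for \textbf{MCSPIT$\bm{_\infty}$}$\bm{(T\backslash\ddot T)}$. For the bounds, edges of $\ddot T$ satisfy $l\le\ddot w\le u$. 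Edges of $T\backslash\ddot T$ satisfy $l(e)\le w(e)\le\hat w(e)\le u(e)$.

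\textbf{Optimality.} Let $\bar w$ be any feasible solution of \textbf{RIOVSPT$\bm{_\infty}$}$\bm{(T)}$. Its restriction $\bar w|_{\ddot T}$ is feasible for \textbf{RIOVSPT$\bm{_\infty}$}$\bm{(\ddot T)}$, since $P_0$ and all leaves in $\ddot T$ have their paths there. Hence $C(\bar w)\ge C(\bar w|_{\ddot T})\ge C(\ddot w)$. For the other part, the restriction $\bar w|_{T\backslash\ddot T}$ satisfies only lower-bound path constraints and box constraints $[l,u]$. We apply the lifting argument of Lemma~\ref{lem-lift} on $T\backslash\ddot T$: set $w'=\max\{\bar w,w\}$ there. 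Then $w'$ is feasible for \textbf{MCSPIT$\bm{_\infty}$}$\bm{(T\backslash\ddot T)}$, because path lengths only increase and $w\le w'\le u$. It also satisfies $C(w')\le C(\bar w|_{T\backslash\ddot T})$. By optimality of $\hat w$ we get $C(\hat w)\le C(w')\le C(\bar w)$. Combining the two bounds gives $C(\bar w)\ge\max\{C(\hat w),C(\ddot w)\}=C(\tilde w)$, which proves optimality.

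The only delicate step is the lifting on $T\backslash\ddot T$. Lemma~\ref{lem-lift} is stated for the whole tree and uses $\tilde w(P_0)=D$, so it must be adapted rather than cited. The adaptation is easy: $P_0\not\subseteq T\backslash\ddot T$, so only the inequality constraints $\bar w(P_k)\ge D$ occur there, and the componentwise argument of the lemma goes through verbatim. A degenerate case to note is when $T\backslash\ddot T$ has no edges (the root has degree one). Then $C(\hat w)$ is taken as $0$ and the statement reduces to $\tilde w=\ddot w$.
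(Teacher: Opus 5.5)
Your proof is correct and follows essentially the same route as the paper. The edge-disjoint split at the gate edge, with every root--leaf path confined to one part, gives feasibility piecewise and the separable value $\max\{C(\hat w),C(\ddot w)\}$, and optimality comes from restricting an arbitrary feasible solution and lifting its $T\setminus\ddot T$ part to $\max\{\bar w,w\}$, which yields an \textbf{MCSPIT}$_\infty(T\setminus\ddot T)$-feasible vector of no larger cost (the paper likewise adapts this lifting by referring to the proof of Theorem~\ref{thm-P0D}, as you do).
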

\begin{proof}
	The two parts are edge-disjoint: $E(\ddot T)\cap E(T\setminus\ddot
T)=\emptyset$. Moreover, every root--leaf path of $T$ lies entirely in
one of them. Hence the constraints on $\ddot T$ and those on
$T\setminus\ddot T$ are independent of each other, and the objective is
separable with respect to the two parts:
	\begin{eqnarray*}
	    C(\tilde w)&=&\max_{e\in E}c(e)|\tilde w(e)-w(e)|\\&=&\max\left\{\max_{e\in T\backslash\ddot T}c(e)|\tilde w(e)-w(e)|,\ \max_{e\in \ddot T}c(e)|\tilde w(e)-w(e)|\right\}.
	\end{eqnarray*}
	We first show the feasibility of the solution $\tilde w$ in (\ref{eq-construct0}). The bounds $l(e)\leq\tilde w(e)\leq u(e)$ hold piecewise by the feasibility of $\hat w$ and $\ddot w$. For a leaf $t_k$ contained in $T\backslash\ddot T$, we have $\tilde w(P_k)=\hat w(P_k)\geq D$. For a leaf $t_k$ contained in $\ddot T$ with $t_k\neq t_0$, we have $\tilde w(P_k)=\ddot w(P_k)\geq D$. Finally, $\tilde w(P_0)=\ddot w(P_0)=D$ as $P_0\subseteq \ddot T$. Hence, $\tilde w$ is feasible. Moreover, $C(\tilde w)=\max\{C(\hat w),C(\ddot w)\}$ follows directly from the separability.
	
	We next show the optimality of $\tilde w$. Let $\tilde w'$ be an
arbitrary feasible solution of \textbf{RIOVSPT$\bm{_\infty}$}$\bm{(T)}$.
Its restrictions to $T\setminus\ddot T$ and to $\ddot T$ are feasible
for the two subproblems, respectively. Consider the restriction to
$T\setminus\ddot T$. As in the proof of Theorem~\ref{thm-P0D}, raising
its downgraded edges back to the original weights gives a feasible
solution of \textbf{MCSPIT$\bm{_\infty}$}$\bm{(T\setminus\ddot T)}$ with
no larger objective value. Hence the restriction of $\tilde w'$ to
$T\setminus\ddot T$ has objective value at least $C(\hat w)$. Therefore,
	$$C(\tilde w')=\max\left\{C_{T\backslash\ddot T}(\tilde w'),\, C_{\ddot T}(\tilde w')\right\}\geq \max\{C(\hat w),C(\ddot w)\}=C(\tilde w),$$
	where $C_{T\backslash\ddot T}(\cdot)$ and $C_{\ddot T}(\cdot)$ denote the objective values restricted to the two parts. Hence, $\tilde w$ is an optimal solution of the problem \textbf{RIOVSPT$\bm{_\infty}$}$\bm{(T)}$ with the optimal value $C(\tilde w)=\max\{C(\hat w), C(\ddot w)\}$. \hb
\end{proof}

By Theorem \ref{thm-construct}, it remains to solve the subproblem \textbf{RIOVSPT$\bm{_\infty}$}$\bm{(\ddot T)}$, on which we focus in the remainder of this paper. Denote by $\ddot Y:=\{t_k\in Y\mid t_k\in \ddot T\}$ the set of leaves contained in $\ddot T$, and recall that $\chi_{t_0}$ is the chain ending at the leaf $t_0$. 
Let $\hat w$ with cost $C(\hat w)$  be an optimal solution of the
problem \textbf{MCSPIT$\bm{_\infty}$}$\bm{(\ddot T\setminus\chi_{t_0})}$,  which can be  computed by Algorithm~1
of \cite{ZhangL2025}. 
The optimal solution returned by Algorithm~1 of \cite{ZhangL2025} has
the level-cap form
\[
\hat w(e):=\min\left\{u(e),\,w(e)+\frac{C(\hat w)}{c(e)}\right\},
\qquad e\in\ddot T\setminus\chi_{t_0},
\]
which we record as the \textbf{canonical optimal solution} of
\textbf{MCSPIT$\bm{_\infty}$}$\bm{(\ddot T\setminus\chi_{t_0})}$. 

Extend the vector by $\hat w(e):=w(e)$ on $\chi_{t_0}$, which leaves the optimal value unchanged. 
For convenience, we introduce the following
notation. For each $e\in E$, let
\begin{align*}
\Delta l(e) &:= l(e)-w(e)\le 0,\qquad
 \Delta u(e) := u(e)-w(e)\ge 0,\\
\Delta\hat l(e) &:= -\min\Bigl\{\frac{C(\hat w)}{c(e)},\,-\Delta l(e)\Bigr\}
                  = \max\Bigl\{-\frac{C(\hat w)}{c(e)},\,\Delta l(e)\Bigr\},
\\ \Delta\hat u(e) &:= \min\Bigl\{\frac{C(\hat w)}{c(e)},\,\Delta u(e)\Bigr\}.
\end{align*}

Moreover, recall that for a vector $f$ and a path $P_j$, we let $f(P_j):=\sum_{e\in P_{j}}f(e)$ for short. Then for  the chain $\chi_{t_0}$, we have 
$
\Delta\hat l(\chi_{t_0}):=\sum_{e\in\chi_{t_0}}\Delta\hat l(e)
$ and $
\Delta\hat u(\chi_{t_0}):=\sum_{e\in\chi_{t_0}}\Delta\hat u(e).
$

The deviation $D^{0}:=D-\hat w(P_{0})$ measures the gap between the
target $D$ and the current length of $P_{0}$. Together with the
adjustment range of the  chain $\chi_{t_0}$, it distinguishes
the three cases below.

\subsection{The case when $\Delta \hat l(\chi_{t_0})\leq D^0\leq \Delta \hat u(\chi_{t_0})$. }
If $\Delta\hat l(\chi_{t_0})\le D^0\le\Delta\hat u(\chi_{t_0})$, then an
optimal solution of \textbf{RIOVSPT$\bm{_\infty(\ddot T)}$} can be obtained
by a direct adjustment of the terminal chain.

\begin{thm}\label{thm-case-mid}
When $\Delta \hat l(\chi_{t_0})\leq D^0\leq \Delta \hat u(\chi_{t_0})$ in the  problem  \textbf{RIOVSPT$\bm{_{\infty}(\ddot T)}$},

(1.1) if $\Delta \hat l(\chi_{t_0})\leq D^0\leq 0,$   determine the edge $e_b=(v_a,v_b)\in \chi_{t_0}$ satisfying \begin{eqnarray}\Delta \hat l(P_{CA(t_0),v_b})\leq D^0\leq \Delta \hat l(P_{CA(t_0),v_a}),\label{eq-th-case1_1}\end{eqnarray}  then 
\begin{eqnarray}\tilde w(e)=\left\{\begin{array}{ll}w(e)+\Delta \hat l(e),&e\in P_{CA(t_0),v_a}\\ w(e)+D^0-\Delta \hat l(P_{CA(t_0),v_a}),& e=e_b\\\hat w(e),&\text{otherwise}\end{array}\right.\label{tilde w-case-1}\end{eqnarray} is an optimal solution of the problem \textbf{RIOVSPT$\bm{_{\infty}(\ddot T)}$}.

(1.2) if $0< D^0\leq \Delta \hat u(\chi_{t_0}),$  determine the edge $e_b=(v_a,v_b)\in \chi_{t_0}$ satisfying\begin{eqnarray}\Delta \hat u( P_{CA(t_0),v_a}) \leq D^0\leq\Delta \hat u(P_{CA(t_0),v_b}),\label{eq-th-case1_2}\end{eqnarray}
 then \begin{eqnarray}\tilde w(e)=\left\{\begin{array}{ll}w(e)+\Delta \hat u(e),&e\in P_{CA(t_0),v_a}\\w(e)+D^0-\Delta \hat u(P_{CA(t_0),v_a}),& e=e_b\\ \hat w(e),&\text{otherwise}\end{array}\right.\label{tilde w-case-2}\end{eqnarray} 
is an optimal solution of the problem \textbf{RIOVSPT$\bm{_{\infty}(\ddot T)}$}.

\end{thm}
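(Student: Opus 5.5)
The plan is to show two things: the vector $\tilde w$ in (\ref{tilde w-case-1}) or (\ref{tilde w-case-2}) is feasible for \textbf{RIOVSPT$\bm{_{\infty}(\ddot T)}$}, and $C(\tilde w)=C(\hat w)$. Here $\hat w$ is the canonical optimal solution of \textbf{MCSPIT$\bm{_\infty}$}$\bm{(\ddot T\setminus\chi_{t_0})}$, extended by $w$ on $\chi_{t_0}$. Optimality then comes from a lower bound: every feasible solution of \textbf{RIOVSPT$\bm{_{\infty}(\ddot T)}$} costs at least $C(\hat w)$. I would first note that the edge $e_b$ exists. The partial sums $\Delta\hat l(P_{CA(t_0),v})$ are nonincreasing as $v$ moves down $\chi_{t_0}$, and they start at $0$ at $CA(t_0)$. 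Symmetrically, the sums $\Delta\hat u(P_{CA(t_0),v})$ are nondecreasing from $0$. The case hypothesis places $D^0$ between $0$ and the full-chain sum, so an intermediate-value argument on the chain yields $e_b$ satisfying (\ref{eq-th-case1_1}) or (\ref{eq-th-case1_2}).

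\textbf{Feasibility.} On $\ddot T\setminus\chi_{t_0}$ we have $\tilde w=\hat w$, and $w\le\hat w\le u$ there. On $\chi_{t_0}$, each modified edge $e\ne e_b$ gets $w(e)+\Delta\hat l(e)\in[l(e),w(e)]$ in case (1.1), or $w(e)+\Delta\hat u(e)\in[w(e),u(e)]$ in case (1.2). For $e_b$, inequality (\ref{eq-th-case1_1}) gives $\Delta\hat l(e_b)\le D^0-\Delta\hat l(P_{CA(t_0),v_a})\le 0$, and similarly in case (1.2). Hence $e_b$ stays within $[l,u]$, and its weighted change is at most $C(\hat w)$ in absolute value. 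Edges below $e_b$ on $\chi_{t_0}$ keep $w=\hat w$. Summing along $P_0$ gives
\[
\tilde w(P_0)=\hat w(P_0)+\Delta\hat l(P_{CA(t_0),v_a})+D^0-\Delta\hat l(P_{CA(t_0),v_a})=\hat w(P_0)+D^0=D,
\]
because $\hat w=w$ on $\chi_{t_0}$. This is the key reason to extend $\hat w$ by $w$ on the chain.

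\textbf{The other leaves.} The main point to check is that every other leaf $t_k\in\ddot Y\setminus\{t_0\}$ still satisfies $\tilde w(P_k)\ge D$. Since $\chi_{t_0}$ is a chain ending at the leaf $t_0$, its internal nodes have degree $2$. So no other root–leaf path uses an edge of $\chi_{t_0}$, and $P_k\subseteq\ddot T\setminus\chi_{t_0}$ for $k\neq0$. Therefore $\tilde w(P_k)=\hat w(P_k)\ge D$ by feasibility of $\hat w$ for the MCSPIT subproblem. I would verify this structural fact carefully from Definition~\ref{defn-chain}, including the degenerate situation where $CA(t_0)=v_1$ and $\chi_{t_0}$ contains $\ddot e$. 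Combining the bounds above, $C(\tilde w)=\max\{C(\hat w),\max_{e\in\chi_{t_0}}c(e)|\tilde w(e)-w(e)|\}=C(\hat w)$.

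\textbf{Optimality.} Let $\bar w$ be any feasible solution of \textbf{RIOVSPT$\bm{_{\infty}(\ddot T)}$}. Its restriction to $\ddot T\setminus\chi_{t_0}$ satisfies $\bar w(P_k)\ge D$ for all leaves of that subtree, since those are exactly the leaves of $\ddot Y\setminus\{t_0\}$. Applying the lifting argument of Lemma~\ref{lem-lift} on this subtree gives a feasible solution of \textbf{MCSPIT$\bm{_\infty}$}$\bm{(\ddot T\setminus\chi_{t_0})}$ with no larger cost. Hence $C(\bar w)\ge C(\hat w)=C(\tilde w)$, which completes the proof. The step I expect to be most delicate is the structural claim that $\chi_{t_0}$ is disjoint from all other root–leaf paths, together with correctly identifying which constraints the subproblem on $\ddot T\setminus\chi_{t_0}$ encodes. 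The rest is bookkeeping.
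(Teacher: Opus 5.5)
Your proposal is correct and follows the paper's own proof. You construct $\tilde w$, check the bounds and the weighted change on $e_b$ via the sandwich inequalities from the defining condition on $e_b$, verify $\tilde w(P_0)=\hat w(P_0)+D^0=D$ and $\tilde w(P_k)=\hat w(P_k)\ge D$ for the side leaves, and conclude $C(\tilde w)=C(\hat w)$ from the lifting lower bound. You also make explicit three points the paper leaves implicit: the existence of $e_b$ (from the monotone partial sums), the fact that no side path meets $\chi_{t_0}$, and a lower bound applied directly to \textbf{MCSPIT$\bm{_\infty}$}$\bm{(\ddot T\setminus\chi_{t_0})}$ rather than a bare citation of Lemma~\ref{lem-lowbd-hard}.
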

\begin{proof}
	(1.1) We first show the feasibility of the solution $\tilde w$.
 \textbf{About the bounds,} we focus on checking $e_b$. By
(\ref{eq-th-case1_1}) and $\Delta \hat l(e_b)\geq \Delta l(e_b)=l(e_b)-w(e_b)$ , we have 
$D^0\ge\Delta\hat l(P_{CA(t_0),v_b})
=\Delta\hat l(e_b)+\Delta\hat l(P_{CA(t_0),v_a})
\ge l(e_b)-w(e_b)+\Delta\hat l(P_{CA(t_0),v_a}).$
 Together with $D^0-\Delta\hat l(P_{CA(t_0),v_a})\leq 0$ from (\ref{eq-th-case1_1}), we have 
\begin{eqnarray*}
    l(e_b)\le w(e_b)+D^0-\Delta\hat l(P_{CA(t_0),v_a})
=\tilde w(e_b)\le w(e_b)\le u(e_b).\label{th4-eq-bound-eb-1}
\end{eqnarray*}
\textbf{About the path-length constraints,} we have
 \begin{eqnarray*}\tilde w(P_0)&=&\hat w(P_{v_1,CA(t_0)})+w(P_{CA(t_0),v_a})+\Delta \hat l(P_{CA(t_0),v_a})\\&&+w(e_b)+
  	D^0-\Delta \hat l(P_{CA(t_0),v_a})+w(P_{v_b,t_0})\\&=&\hat w(P_0)+\Delta \hat l(P_{CA(t_0),v_a})+D^0-\Delta \hat l(P_{CA(t_0),v_a})=D.
 \end{eqnarray*} For other $t_k\in \ddot Y\backslash\{t_0\}$,  we have $\tilde w(P_k)=\hat w(P_k)\geq D $. Hence, the feasibility holds. 

 We next verify the cost and the optimality. For $e\in P_{CA(t_0),v_a}$,
	\begin{eqnarray*}
	c(e)\big|\tilde w(e)-w(e)\big|=-c(e)\Delta \hat l(e)=c(e)\min\Big\{\frac{C(\hat w)}{c(e)},\,-\Delta l(e)\Big\}\leq C(\hat w).
	\end{eqnarray*}
	For $e_b$, since $0\leq w(e_b)-\tilde w(e_b)=\Delta \hat l(P_{CA(t_0),v_a})-D^0\leq -\Delta \hat l(e_b)$ by  (\ref{eq-th-case1_1}), we have
	$$
	c(e_b)\big|\tilde w(e_b)-w(e_b)\big|\leq -c(e_b)\Delta \hat l(e_b)\leq C(\hat w). 
	$$
    For other edges,  	we have  $c(e)\big|\tilde w(e)-w(e)\big|=c(e)\big|\hat w(e)-w(e)\big|\leq C(\hat w)$. Therefore, $C(\tilde w)\leq C(\hat w)$. By Lemma \ref{lem-lowbd-hard}, we have 
	$C(\tilde w)\geq C(\hat w)$. Then $C(\tilde w)= C(\hat w)$ and $\tilde w$ is an optimal solution of the problem \textbf{RIOVSPT$\bm{_{\infty}(\ddot T)}$}.

	(1.2)  We first show the feasibility of the solution $\tilde w$.
\textbf{About the bounds,} we focus on checking $e_b$. By (\ref{eq-th-case1_2}) and $\Delta \hat u(e_b)\leq \Delta u(e_b)=u(e_b)-w(e_b)$, we have
	$
		D^0\leq \Delta \hat u(P_{CA(t_0),v_b})=\Delta \hat u(P_{CA(t_0),v_a})+\Delta \hat u(e_b)\leq \Delta \hat u(P_{CA(t_0),v_a})+u(e_b)-w(e_b).
		$\\
	Together with  $D^0-\Delta \hat u(P_{CA(t_0),v_a})\geq 0$ from (\ref{eq-th-case1_2}), we have  \begin{eqnarray*}
	    l(e_b)\leq w(e_b)\leq \tilde w(e_b)=w(e_b)+D^0-\Delta \hat u(P_{CA(t_0),v_a})\leq u(e_b).\label{th4-eq-bound-eb-2}
	\end{eqnarray*}
\textbf{About the path-length constraints,} we have
 \begin{eqnarray*}\tilde w(P_0)&=&\hat w(P_{v_1,CA(t_0)})+w(P_{CA(t_0),v_a})+\Delta \hat u(P_{CA(t_0),v_a})\\&&+w(e_b)+
		D^0-\Delta \hat u(P_{CA(t_0),v_a})+w(P_{v_b,t_0})\\&=&\hat w(P_0)+\Delta \hat u(P_{CA(t_0),v_a})+D^0-\Delta \hat u(P_{CA(t_0),v_a})=D.
	\end{eqnarray*} For other $t_k\in \ddot Y\backslash\{t_0\}$, we have $\tilde w(P_k)=\hat w(P_k)\geq D $. Hence, the feasibility holds.
    
 We next verify the cost and the optimality.  For $e\in P_{CA(t_0),v_a}$,
	\begin{eqnarray*}
	c(e)\big|\tilde w(e)-w(e)\big|=c(e)\Delta \hat u(e)=c(e)\min\Big\{\frac{C(\hat w)}{c(e)},\,\Delta u(e)\Big\}\leq C(\hat w).
	\end{eqnarray*}
	For $e_b$, since $0\leq \tilde w(e_b)-w(e_b)=D^0-\Delta \hat u(P_{CA(t_0),v_a})\leq \Delta \hat u(e_b)$ by (\ref{eq-th-case1_2}), we have
	\begin{eqnarray*}
	c(e_b)\big|\tilde w(e_b)-w(e_b)\big|\leq c(e_b)\Delta \hat u(e_b)\leq C(\hat w).
	\end{eqnarray*}
	For other edges, we have  $c(e)\big|\tilde w(e)-w(e)\big|=c(e)\big|\hat w(e)-w(e)\big|\leq C(\hat w)$. Therefore, $C(\tilde w)\leq C(\hat w)$. By Lemma \ref{lem-lowbd-hard}, we have 
	$C(\tilde w)\geq C(\hat w)$. Then $C(\tilde w)= C(\hat w)$ and $\tilde w$ is an optimal solution of the problem \textbf{RIOVSPT$\bm{_{\infty}(\ddot T)}$}.\hb

	\end{proof}

\subsection{The case when $\Delta \hat u(\chi_{t_0})<D^0$.}\label{subsec-case-up}
In this case $D^0=D-\hat w(P_0)>\Delta\hat u(\chi_{t_0})\geq 0$,that is, $\hat w(P_0)<D$. It means  even if every edge of the chain $\chi_{t_0}$ receives its free upgrade $\Delta\hat u(e)$, the length of $P_0$ is still smaller than $D$. Thus the length of $P_0$ cannot reach $D$ at the current cost level. 
 We next determine directly how much length can be obtained on $P_0$ under a per-edge cost budget $\lambda$. For any $\lambda\geq 0$, define the \textbf{achievable length function} of the path $P_0$ by
\begin{equation}\label{eq-Ulam}
L(\lambda):=\sum_{e\in P_0}\min\Big\{u(e),\ w(e)+\frac{\lambda}{c(e)}\Big\}.
\end{equation}
Clearly, $L(\lambda)$ is the maximal length of $P_0$ attainable by a weight vector whose per-edge adjustment cost does not exceed $\lambda$. Its relevant properties are summarized below, the proof being immediate from the definition.

\begin{lem}\label{lem-Uprop}
The function $L$ is continuous, nondecreasing and piecewise linear on
$[0,+\infty)$. It is strictly increasing on $[0,\bar\mu]$ and stays
constantly equal to $u(P_0)$ on $[\bar\mu,+\infty)$, where
$\bar\mu:=\max_{e\in P_0}c(e)\Delta u(e)$. Moreover, $L(0)=w(P_0)$.
\end{lem}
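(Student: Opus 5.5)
The plan is to work term by term. For each edge $e\in P_0$ I will set
\[
f_e(\lambda):=\min\Bigl\{u(e),\,w(e)+\frac{\lambda}{c(e)}\Bigr\},\qquad \lambda\ge 0,
\]
so that $L=\sum_{e\in P_0}f_e$. Every property in Lemma~\ref{lem-Uprop} will then follow from the explicit form of each $f_e$ together with closure of the relevant properties under finite sums.

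First I will describe each $f_e$ exactly. Put $\mu_e:=c(e)\Delta u(e)\ge 0$. Because $c(e)>0$, the comparison $w(e)+\lambda/c(e)\le u(e)$ holds if and only if $\lambda\le\mu_e$. Hence $f_e(\lambda)=w(e)+\lambda/c(e)$ on $[0,\mu_e]$ and $f_e(\lambda)=u(e)$ on $[\mu_e,+\infty)$. The two expressions agree at $\mu_e$.

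This shows that $f_e$ is continuous and piecewise linear with a single breakpoint $\mu_e$. It is nondecreasing, since its slope is $1/c(e)>0$ on the first piece and $0$ on the second. It is strictly increasing on $[0,\mu_e]$.

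Summing over the finitely many edges of $P_0$, $L$ is continuous, nondecreasing and piecewise linear. Its breakpoints lie among $\{\mu_e : e\in P_0\}$, so it has at most $|P_0|+1$ affine pieces. At $\lambda=0$ we have $f_e(0)=\min\{u(e),w(e)\}=w(e)$ because $w(e)\le u(e)$, and therefore $L(0)=w(P_0)$.

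Next I will take $\bar\mu=\max_{e\in P_0}\mu_e$.

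\emph{Constant part.} For $\lambda\ge\bar\mu$ we have $\lambda\ge\mu_e$ for every $e$, so every $f_e(\lambda)=u(e)$ and $L(\lambda)=u(P_0)$.

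\emph{Strict monotonicity.} Choose an edge $e^*$ with $\mu_{e^*}=\bar\mu$. For $0\le\lambda<\lambda'\le\bar\mu$, the term $f_{e^*}$ increases strictly, since $f_{e^*}(\lambda')-f_{e^*}(\lambda)=(\lambda'-\lambda)/c(e^*)>0$. All other terms are nondecreasing. Hence $L(\lambda')>L(\lambda)$.

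The only point needing care is the degenerate case $\bar\mu=0$, where $u=w$ on all of $P_0$. There the interval $[0,\bar\mu]$ is a single point, so the strictness claim holds vacuously and $L\equiv w(P_0)=u(P_0)$. I would state this case explicitly. Otherwise the argument is a routine calculation, and I expect no real obstacle.
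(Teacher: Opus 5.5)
Your proposal is correct and takes the same approach as the paper, which omits the proof as ``immediate from the definition.'' Your term-by-term analysis of $\min\{u(e),w(e)+\lambda/c(e)\}$ with its single kink at $c(e)\Delta u(e)$, plus the choice of an edge attaining $\bar\mu$ for strict monotonicity, is exactly that omitted verification.
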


Since the problem (\textbf{RIOVSPT$\bm{_\infty}$}) is feasible, Theorem \ref{thm-fea=RIOVSPT_infty} gives $D\leq u(P_0)$, while in the present case $D>\hat w(P_0)\geq w(P_0)=L(0)$. Hence the value
\begin{equation}\label{eq-lamstar}
\lambda^*:=\min\{\lambda\geq 0\mid L(\lambda)\geq D\}
\end{equation}
is well defined, and the continuity of $L$ implies $L(\lambda^*)=D$.
The next lemma shows that in this case the new cost level necessarily exceeds $C(\hat w)$.

\begin{lem}\label{lem-lamstar-gt}
If $\Delta\hat u(\chi_{t_0})<D^0$, then $\lambda^*>C(\hat w)$.
\end{lem}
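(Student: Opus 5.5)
The plan is to argue by contraposition through the monotonicity of $L$. By the definition (\ref{eq-lamstar}) of $\lambda^*$ and Lemma~\ref{lem-Uprop} ($L$ nondecreasing), if $\lambda^*\le C(\hat w)$ then $L(C(\hat w))\ge L(\lambda^*)= D$. So it suffices to prove the strict inequality $L(C(\hat w))<D$. Since $L$ is nondecreasing and $\lambda^*$ is the least $\lambda$ with $L(\lambda)\ge D$, this inequality forces $\lambda^*>C(\hat w)$.

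The main step is to evaluate $L(C(\hat w))$ explicitly in terms of $\hat w$. Split $P_0$ into the prefix $P_{v_1,CA(t_0)}$ and the terminal chain $\chi_{t_0}=P_{CA(t_0),t_0}$; if $CA(t_0)=v_1$, the prefix is empty.

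\textbf{Prefix.} Every edge of $P_{v_1,CA(t_0)}$ lies in $\ddot T\setminus\chi_{t_0}$, and this includes the gate edge $\ddot e$. So the canonical level-cap form of $\hat w$ applies there, and each summand $\min\{u(e),w(e)+C(\hat w)/c(e)\}$ equals $\hat w(e)$. Hence the prefix contributes exactly $\hat w(P_{v_1,CA(t_0)})$.

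\textbf{Chain.} On $\chi_{t_0}$ we have $\hat w(e)=w(e)$ by the extension convention. Each summand can be rewritten as
\[
\min\{u(e),w(e)+C(\hat w)/c(e)\}=w(e)+\min\{\Delta u(e),C(\hat w)/c(e)\}=\hat w(e)+\Delta\hat u(e).
\]

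Adding the two parts gives
\[
L(C(\hat w))=\hat w(P_0)+\Delta\hat u(\chi_{t_0}).
\]
The case hypothesis $\Delta\hat u(\chi_{t_0})<D^0=D-\hat w(P_0)$ then yields $L(C(\hat w))<D$, which completes the argument.

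The only delicate point I expect is the bookkeeping in the prefix step. One must check that the level-cap formula for $\hat w$ really holds on every edge of $P_{v_1,CA(t_0)}$, i.e.\ that these edges belong to $\ddot T\setminus\chi_{t_0}$ and are not modified by the extension to $\chi_{t_0}$. One must also check the degenerate case $C(\hat w)=0$, where $\Delta\hat u\equiv 0$, and where $L(0)=w(P_0)=\hat w(P_0)<D$ gives the same conclusion directly.
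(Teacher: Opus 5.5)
Your proposal is correct and is essentially the paper's own proof. You evaluate $L(C(\hat w))$ edge by edge, using the level-cap form of $\hat w$ on $P_0\setminus\chi_{t_0}$ and $\hat w=w$ on $\chi_{t_0}$, to get $L(C(\hat w))=\hat w(P_0)+\Delta\hat u(\chi_{t_0})<D$, and then conclude $\lambda^*>C(\hat w)$ from the monotonicity of $L$ and the minimality in the definition of $\lambda^*$. The separate check for $C(\hat w)=0$ is harmless but unnecessary, since the general identity already covers that case.
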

\begin{proof}
For $e\in P_0\setminus\chi_{t_0}$, we have $\min\left\{u(e),w(e)+\frac{C(\hat w)}{c(e)}\right\}=\hat w(e).$ \\
For $e\in\chi_{t_0}$ we have $\hat w(e)=w(e)$, and hence
$$\min\left\{u(e),w(e)+\frac{C(\hat w)}{c(e)}\right\}=w(e)+\min\left\{\frac{C(\hat w)}{c(e)},\Delta u(e)\right\}=\hat w(e)+\Delta\hat u(e).$$
 Summing over $e\in P_0$ yields
\begin{eqnarray*}
L\big(C(\hat w)\big)&=&\hat w(P_0\backslash \chi_{t_0})+\sum_{e\in \chi_{t_0}}(\hat w(e)+\Delta \hat u(e))\\&=&\hat w(P_0)+\Delta\hat u(\chi_{t_0})<\hat w(P_0)+D^0=D=L(\lambda^*).
\end{eqnarray*}
 If $\lambda^*\leq C(\hat w)$, the monotonicity of $L$ would imply $L(\lambda^*)\leq L(C(\hat w))<D=L(\lambda^*)$, a contradiction. Hence $\lambda^*>C(\hat w)$. \hb
\end{proof}

We next focus on computing the value $\lambda^*$. By Lemma~\ref{lem-lamstar-gt}, the search may start from $C(\hat w)$. Sort the distinct values of
$\{c(e)\Delta u(e)\mid e\in P_{0}, c(e)\Delta u(e)>C(\hat w)\}$
in ascending order:
\begin{eqnarray}
\lambda_{1}:=c(e_{\alpha_{1}})\Delta u(e_{\alpha_{1}})<\cdots
<\lambda_{m}:=c(e_{\alpha_{m}})\Delta u(e_{\alpha_{m}}),\label{eq:Sequence-up}
\end{eqnarray}
and set $\lambda_{0}:=C(\hat w)$. Since $L(\lambda_0)=\hat w(P_0)+\Delta\hat u(\chi_{t_0})<D$   and $L(\lambda_{m})=u(P_{0})\ge D$, a binary
search over $\lambda_{0},\dots,\lambda_{m}$ finds a unique index $i$
with $L(\lambda_{i})<D\le L(\lambda_{i+1})$.

Each edge $e\in P_{0}$ takes the value $u(e)$ if $c(e)\varDelta u(e)\le\lambda_{i}$, and $w(e)+\lambda_{i}/c(e)$ otherwise.  Hence $L(\lambda_{i})$ can be directly computed as follows.
\begin{equation}\label{eq-Laffine}
L(\lambda_j)=\sum_{e\in E_j}u(e)+\sum_{e\in P_0\setminus E_j}\Big(w(e)+\frac{\lambda_j}{c(e)}\Big),~\text{where } E_j:=\{e\in P_0\mid c(e)\Delta u(e)\leq\lambda_j\}.
\end{equation}
Then $\lambda^*\in(\lambda_i,\lambda_{i+1}]$. We have  $E_<^*:=\{e\in P_0\mid c(e)\Delta u(e)<\lambda^*\}=\{e\in P_0\mid c(e)\Delta u(e)\leq\lambda_i\}$ and $E_\ge^*:=\{e\in P_0\mid c(e)\Delta u(e)\ge\lambda^*\}=\{e\in P_0\mid c(e)\Delta u(e)>\lambda_i\}$. 
 Solving $L(\lambda^*)=\sum_{e\in E_<^*}u(e)+\sum_{e\in E_\ge^*}\Big(w(e)+\frac{\lambda^*}{c(e)}\Big)=D$  and we can obtain 
\begin{equation}\label{eq-lamstar-closed}
\lambda^*=\frac{\displaystyle D-w(P_0)-\sum_{e\in E_<^*}\Delta u(e)}{\displaystyle\sum_{e\in E_\ge^*}\frac{1}{c(e)}}.
\end{equation}

No feasible solution costs less than $\lambda^{*}$. Indeed, the bounds
and the budget constraint of any feasible $\tilde w$ give, for every
$e\in P_{0}$,
\[
\tilde w(e)\le\min\Big\{u(e),\,w(e)+\frac{C(\tilde w)}{c(e)}\Big\},
\]
so $D=\tilde w(P_{0})\le L(C(\tilde w))$, and hence
$C(\tilde w)\ge\lambda^{*}$ by the minimality in (\ref{eq-lamstar}).

We can now obtain an optimal solution of
\textbf{(RIOVSPT$\bm{_\infty}(\ddot T)$)} as follows.

\begin{thm}\label{thm-opt-up}
If $\Delta\hat u(\chi_{t_0})<D^0$,  then
\begin{equation}\label{eq-wstar-up}
\tilde w(e):=\left\{\begin{array}{ll}
w(e)+\min\Big\{\dfrac{\lambda^*}{c(e)},\,\Delta u(e)\Big\}, & e\in P_0\\
\hat w(e), & e\in\ddot T\setminus P_0
\end{array}\right.
\end{equation}
is an optimal solution of the subproblem \emph{\textbf{RIOVSPT$\bm{_\infty}$}$\bm{(\ddot T)}$}, where $\lambda^*$ is computed as (\ref{eq-lamstar-closed}).  The optimal value is $C(\tilde w)=\lambda^*$.
\end{thm}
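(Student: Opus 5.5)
The plan is to follow the same two-step pattern as Theorem~\ref{thm-case-mid}: first show that $\tilde w$ in (\ref{eq-wstar-up}) is feasible with $C(\tilde w)\le\lambda^*$, then invoke the lower bound already established just before the statement, namely that every feasible solution satisfies $D=\tilde w(P_0)\le L(C(\tilde w))$ and hence $C(\tilde w)\ge\lambda^*$ by the minimality in (\ref{eq-lamstar}). Combining the two inequalities gives $C(\tilde w)=\lambda^*$ and optimality.

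For feasibility we check three things.

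\textbf{Bounds.} On $P_0$ we have $w(e)\le\tilde w(e)\le w(e)+\Delta u(e)=u(e)$, and $l(e)\le w(e)$. Off $P_0$, $\tilde w=\hat w$ is within $[w(e),u(e)]$.

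\textbf{The path $P_0$.} By construction $\tilde w(P_0)=L(\lambda^*)$, and this equals $D$ by continuity of $L$ (Lemma~\ref{lem-Uprop}). One can also verify this directly via the closed form (\ref{eq-lamstar-closed}): the sets $E^*_<$ and $E^*_\ge$ are exactly the edges where the minimum in (\ref{eq-wstar-up}) takes the value $\Delta u(e)$ or $\lambda^*/c(e)$, respectively.

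\textbf{Other leaves.} For $t_k\in\ddot Y\setminus\{t_0\}$ we must compare $\tilde w(P_k)$ with $\hat w(P_k)\ge D$. The edges of $P_k\cap P_0$ lie in $P_0\setminus\chi_{t_0}$ or in the common part with $\chi_{t_0}$. Since $P_k$ leaves $P_0$ at a branching node, $P_k\cap P_0\subseteq P_{v_1,CA(t_0)}$, so these edges avoid $\chi_{t_0}$ entirely. On such edges $\hat w(e)=\min\{u(e),w(e)+C(\hat w)/c(e)\}$ by the canonical level-cap form. Because $\lambda^*>C(\hat w)$ by Lemma~\ref{lem-lamstar-gt}, it follows that $\tilde w(e)\ge\hat w(e)$ there. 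The remaining edges of $P_k$ keep the value $\hat w$. Hence $\tilde w(P_k)\ge\hat w(P_k)\ge D$. Here we use that $\hat w(P_k)\ge D$ holds because $\hat w$ is feasible for \textbf{MCSPIT$\bm{_\infty}$}$\bm{(\ddot T\setminus\chi_{t_0})}$, whose leaves are exactly those of $\ddot Y\setminus\{t_0\}$.

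For the cost, every $e\in P_0$ satisfies $c(e)|\tilde w(e)-w(e)|=\min\{\lambda^*,c(e)\Delta u(e)\}\le\lambda^*$. Every edge off $P_0$ contributes at most $C(\hat w)<\lambda^*$. Hence $C(\tilde w)\le\lambda^*$, and combined with the lower bound we get equality.

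The main obstacle is the monotonicity step $\tilde w\ge\hat w$ on $P_0\setminus\chi_{t_0}$. It needs both the level-cap form of $\hat w$ and the precise claim that shared edges between $P_0$ and the other root--leaf paths lie above $CA(t_0)$. In particular we must make sure the extension $\hat w=w$ on $\chi_{t_0}$ causes no issue. This is harmless, since $\tilde w\ge w=\hat w$ there as well.
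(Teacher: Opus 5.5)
Your proposal is correct and follows essentially the same route as the paper: feasibility via $\tilde w\ge\hat w$ on $P_0\cap P_k$ from Lemma~\ref{lem-lamstar-gt}, the cost bound $C(\tilde w)\le\lambda^*$, and the lower bound $D\le L(C(\tilde w))$ giving $C(\tilde w)\ge\lambda^*$. Your explicit check that $P_k\cap P_0\subseteq P_{v_1,CA(t_0)}$, so that the level-cap form of $\hat w$ applies on the shared edges, with edges of $\chi_{t_0}$ harmless anyway, makes precise a step the paper uses only implicitly.
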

\begin{proof}
\textbf{Feasibility.} 
For $e\in P_0$ we have $$l(e)\leq w(e)\leq\tilde w(e)=\min\left\{u(e),w(e)+\frac{\lambda^*}{c(e)}\right\}\leq u(e).$$ For  $e\in\ddot T\setminus P_0$ we have $l(e)\leq w(e)\leq\hat w(e)=\tilde w(e)\leq u(e)$. Moreover,
\begin{equation*}
\tilde w(P_0)=\sum_{e\in P_0}\min\Big\{u(e),\,w(e)+\frac{\lambda^*}{c(e)}\Big\}=L(\lambda^*)=D.
\end{equation*}
For $t_k\in\ddot Y\setminus\{t_0\}$ and $e\in P_0\cap P_k$, we
have, by Lemma~\ref{lem-lamstar-gt},
\begin{equation*}
\tilde w(e)=\min\Big\{u(e),\,w(e)+\frac{\lambda^*}{c(e)}\Big\}\geq\min\Big\{u(e),\,w(e)+\frac{C(\hat w)}{c(e)}\Big\}=\hat w(e).
\end{equation*}
While for $e\in P_k\setminus P_0$, we have $\tilde w(e)=\hat w(e)$. Hence $\tilde w(P_k)\geq\hat w(P_k)\geq D$  and $\tilde w$ is a feasible solution of the problem \textbf{RIOVSPT$\bm{_\infty}$}$\bm{(\ddot T)}$.

\textbf{Optimality.} On the one hand, 
for $e\in P_0$,
we have $c(e)\big(\tilde w(e)-w(e)\big)=\min\{\lambda^*,\,c(e)\Delta u(e)\}\leq\lambda^*$. 
For $e\in\ddot T\setminus P_0$, we have
$c(e)\big(\tilde w(e)-w(e)\big)=c(e)\big(\hat w(e)-w(e)\big)\leq C(\hat w)<\lambda^*$
by Lemma \ref{lem-lamstar-gt}. Therefore $C(\tilde w)\leq\lambda^*$.

On the other hand, no feasible solution costs less than $\lambda^{*}$. Indeed, the bounds
and the budget constraint of any feasible $\tilde w$ imply that , for every
$e\in P_{0}$,
\[
\tilde w(e)\le\min\Bigl\{u(e),\,w(e)+\frac{C(\tilde w)}{c(e)}\Bigr\}.
\]
Summing over $P_{0}$ yields $D=\tilde w(P_{0})\le L(C(\tilde w))$, and
hence $C(\tilde w)\ge\lambda^{*}$ by (\ref{eq-lamstar}). Together with
$C(\tilde w)\le\lambda^{*}$ proved above, 
$C(\tilde w)=\lambda^{*}$ holds.\hb
\end{proof}

\subsection{The case when $\Delta \hat l(\chi_{t_0})> D^0$. }
The remaining case is $\Delta\hat l(\chi_{t_0})>D^0$.  At the fixed level $C(\hat w)$, shortening $\chi_{t_0}$ alone
cannot bring the length of $P_0$ down to $D$. A larger budget is
needed, or edges outside $\chi_{t_0}$ on $P_0$ must also be adjusted,
or both. This is the most significant and difficult case of the problem
\textbf{(RIOVSPT$\bm{_\infty}(\ddot T)$)}. For convenience,
we call the paths $P_k$ with $t_k\in\ddot Y\setminus\{t_0\}$ the side
paths.

The analysis proceeds in four steps. Section~\ref{sss-spit}
introduces a parametric subproblem and solves it exactly by an
iterative greedy procedure. Section~\ref{sss-V} characterizes
its value function $V(M)$ and reduces the problem to the threshold
$M^{*}$. Section~\ref{sss-margins} tests the attainability of a
budget through truncated-bound margins. Section~\ref{sss-final}
locates the optimal budget in closed form and returns an optimal
solution.
\subsubsection {The parametric subproblem and its greedy solver.}\label{sss-spit}
Since $\Delta \hat l(\chi_{t_0})> D^0$, the length of $P_0$ must be reduced by further
adjustments.  We
determine their minimal extra cost beyond $C(\hat w)$. To this end we
introduce a parametric subproblem. It minimizes the length of $P_{0}$
under an extra budget. An iterative greedy procedure solves it exactly.

We now prepare the construction. If  $\Delta \hat l(\chi_{t_0})> D^0$, we first adjust $\hat w$ on $\chi_{t_0}$ by its free downgrading at the level $C(\hat w)$:
\begin{equation}\label{eq-free-down}
\hat w(e):=w(e)-\min\Big\{\frac{C(\hat w)}{c(e)},\,-\Delta l(e)\Big\}\;=\;w(e)+\Delta\hat l(e),\qquad \text{for~}e\in\chi_{t_0}
\end{equation}
and keep $\hat w(e)$ unchanged elsewhere. Note that $l(e)\le\hat w(e)\le w(e)$ for $e\in\chi_{t_0}$. 
After this adjustment the length of $P_0$ still exceeds $D$. Indeed,
$\hat w(P_0)-D=\Delta\hat l(\chi_{t_0})-D^0>0$ by the case condition.
This excess must be removed by further adjustments. By
Lemma~\ref{lem-lowbd-hard}, their cost has the form $C(\hat w)+M$ with
$M\ge0$. We introduce a parametric subproblem at the level
$C(\hat w)+M$ and study how its optimum varies with $M$.

In this subsection, the reference vector is the pre-adjusted $\hat w$ specified above. For
any extra cost $M\ge0$, the subproblem $\mathbf{SPIT}_{\infty}^{0}(M)$
seeks a weight vector $\tilde w^{M}$ to minimize the length of $P_0$ on the premise that 
the total adjustment cost under $l_\infty$ norm does not exceed the total budget $C(\hat w)+M$ and 
every path $P_j$ with $t_j\in\ddot Y\setminus\{t_0\}$ still has length at
least $D$. The mathematical model is as follows.
\begin{eqnarray}\label{SPITM}
&\min_{\tilde w^M}& \tilde w^M(P_0)\nonumber\\
\textbf{SPIT$_\infty^0$(M)}&s.t.& \max_{e\in \ddot T}c(e)|\tilde w^M(e)-w(e)|\leq C(\hat w)+M, \\
&& \tilde w^M(P_j)\geq D,\quad t_j\in \ddot Y\backslash \{t_0\},\nonumber\\
&&l(e)\leq \tilde w^M(e)\leq u(e),\quad e\in \ddot T.\nonumber
\end{eqnarray}

The subproblem is solved by an iterative greedy procedure. Its initial
state and update rules are specified next and then summarized as an
algorithm, together with the optimality analysis.

The construction starts from the following initial solution, which upgrades
the edges of $\ddot T\setminus P_0$ as much as the level $C(\hat w)+M$ allows
and downgrades the edges of $\chi_{t_0}$ accordingly:
\begin{equation}\label{tildew1}
\tilde w^1(e):=\left\{\begin{array}{lr}
\hat w(e)+\min\left\{\dfrac{M}{c(e)},\,u(e)-\hat w(e)\right\},&e\in\ddot T\backslash P_0\\[2mm]
\hat w(e),&e\in P_0\backslash\chi_{t_0}\\[2mm]
\hat w(e)-\min\left\{\dfrac{M}{c(e)},\,\hat w(e)-l(e)\right\},&e\in\chi_{t_0}
\end{array}\right..
\end{equation}

\begin{lem}\label{lem-w1fea}
For any $M\geq 0$, the solution $\tilde w^1$ as (\ref{tildew1})
is a feasible solution of the problem \textbf{SPIT$_\infty^0$(M)}.
\end{lem}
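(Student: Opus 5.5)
We check the three groups of constraints of \textbf{SPIT$_\infty^0$(M)} in turn: the bounds, the budget, and the side-path lengths. Throughout, $\hat w$ is the pre-adjusted vector of (\ref{eq-free-down}). It coincides with the canonical level-cap solution on $\ddot T\setminus\chi_{t_0}$, equals $w+\Delta\hat l$ on $\chi_{t_0}$, and satisfies $l(e)\le\hat w(e)\le u(e)$ on all of $\ddot T$.

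\textbf{Bounds.} On $\ddot T\setminus P_0$ the increment $\min\{M/c(e),u(e)-\hat w(e)\}$ is nonnegative and at most $u(e)-\hat w(e)$. Hence $l(e)\le\hat w(e)\le\tilde w^1(e)\le u(e)$. On $\chi_{t_0}$ the decrement $\min\{M/c(e),\hat w(e)-l(e)\}$ is nonnegative and at most $\hat w(e)-l(e)$. Hence $l(e)\le\tilde w^1(e)\le\hat w(e)\le w(e)\le u(e)$. On $P_0\setminus\chi_{t_0}$ nothing changes.

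\textbf{Budget.} For $e\in\ddot T\setminus P_0$ we have $\hat w(e)\ge w(e)$ and $c(e)(\hat w(e)-w(e))\le C(\hat w)$. Therefore
\[
c(e)|\tilde w^1(e)-w(e)|=c(e)(\hat w(e)-w(e))+c(e)\min\{M/c(e),\,u(e)-\hat w(e)\}\le C(\hat w)+M.
\]
For $e\in\chi_{t_0}$ we have $w(e)-\hat w(e)=-\Delta\hat l(e)\le C(\hat w)/c(e)$, so the same triangle-type estimate gives $c(e)(w(e)-\tilde w^1(e))\le C(\hat w)+M$. For $e\in P_0\setminus\chi_{t_0}$ the cost is that of the canonical solution, which is at most $C(\hat w)$.

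\textbf{Side paths.} This is the only step needing care. Fix $t_j\in\ddot Y\setminus\{t_0\}$. The path $P_j$ shares with $P_0$ only an initial segment $P_j\cap P_0=P_{v_1,v}$, where $v$ is the branching node at which $P_j$ leaves $P_0$. Since $\chi_{t_0}$ runs from $CA(t_0)$ to the leaf $t_0$ with no branching node strictly inside it, $v$ is an ancestor of $CA(t_0)$ or equal to it. Hence $P_j\cap\chi_{t_0}=\emptyset$. Thus on $P_j$ every edge either is unchanged ($e\in P_0\setminus\chi_{t_0}$) or is raised ($e\in\ddot T\setminus P_0$). This gives $\tilde w^1(P_j)\ge\hat w(P_j)$. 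The free downgrade (\ref{eq-free-down}) also touched only $\chi_{t_0}$, so $\hat w(P_j)$ equals the value of the canonical optimal solution of \textbf{MCSPIT$_\infty(\ddot T\setminus\chi_{t_0})$} on $P_j$, which is at least $D$ by its feasibility. Therefore $\tilde w^1(P_j)\ge D$, and $\tilde w^1$ is feasible for every $M\ge0$.

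The main obstacle is the side-path step. It rests on the structural fact that no side path meets the terminal chain $\chi_{t_0}$, which follows from Definitions of layer, critical ancestor and chain. I would state this fact explicitly before using it.
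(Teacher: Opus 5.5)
Your proof is correct and follows the paper's argument essentially line for line. It makes the same three checks: the bounds; the per-edge budget, via $c(e)(\hat w(e)-w(e))\le C(\hat w)$ or $c(e)(w(e)-\hat w(e))\le C(\hat w)$ plus a change of at most $M/c(e)$; and $\tilde w^1(P_j)\ge\hat w(P_j)\ge D$. Your explicit observation that no side path meets $\chi_{t_0}$ (since $CA(t_0)$ is the deepest branching node of $P_0$) is exactly what the paper uses tacitly when it writes $\tilde w^1(P_k)=\hat w(P_k\cap P_0)+\cdots$, so stating it is a useful clarification.
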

\begin{proof}
\textbf{About the bounds.} Recall that $w(e)\leq\hat w(e)\leq u(e)$ for $e\in\ddot T\backslash\chi_{t_0}$ and $l(e)\leq\hat w(e)\leq w(e)$ for $e\in\chi_{t_0}$ after the pre-adjustment (\ref{eq-free-down}).
 For $e\in\ddot T\backslash P_0$,we have 
$
u(e)\geq\tilde w^1(e)=\hat w(e)+\min\Big\{\frac{M}{c(e)},\,u(e)-\hat w(e)\Big\}\geq\hat w(e)\geq w(e)\geq l(e).
$\\
For $e\in P_0\backslash\chi_{t_0}$, $l(e)\leq\hat w(e)=\tilde w^1(e)\leq u(e)$. For $e\in\chi_{t_0}$,\\
$
l(e)=\hat w(e)-\big(\hat w(e)-l(e)\big)\leq\tilde w^1(e)=\hat w(e)-\min\Big\{\frac{M}{c(e)},\,\hat w(e)-l(e)\Big\}\leq \hat w(e)\leq u(e).
$
\textbf{About the cost.} For $e\in\ddot T\backslash P_0$,\\
$
c(e)\big|\tilde w^1(e)-w(e)\big|=c(e)\big(\hat w(e)-w(e)\big)+c(e)\min\Big\{\frac{M}{c(e)},\,u(e)-\hat w(e)\Big\}\leq C(\hat w)+M.
$
For $e\in P_0\backslash\chi_{t_0}$, $c(e)\big|\tilde w^1(e)-w(e)\big|=c(e)\big(\hat w(e)-w(e)\big)\leq C(\hat w)\leq C(\hat w)+M$.\\
For $e\in\chi_{t_0}$,\\
$
c(e)\bigl|\tilde w^{1}(e)-w(e)\bigr|
=c(e)\bigl(w(e)-\hat w(e)\bigr)
 +c(e)\min\Bigl\{\frac{M}{c(e)},\,\hat w(e)-l(e)\Bigr\}
\le C(\hat w)+M.
$\\
 \textbf{About the path constraints.} For  $t_k\in\ddot Y\backslash\{t_0\}$,\\
$
\tilde w^1(P_k)=\hat w(P_k\cap P_0)+\sum_{e\in P_k\backslash P_0}\Big(\hat w(e)+\min\Big\{\frac{M}{c(e)},\,u(e)-\hat w(e)\Big\}\Big)\geq\hat w(P_k)\geq D.
$
Hence $\tilde w^1$ is a feasible solution of the problem \textbf{SPIT$_\infty^0$(M)}. \hb
\end{proof}
Starting from $\tilde w^1$, the procedure shortens $P_0$ iteratively.
In each iteration it processes an initial subpath of the unprocessed
part of $P_0$. The edges of this subpath are decreased to their lowest
values attainable at the cost $C(\hat w)+M$. For convenience, for $e\in P_0$,
denote by
\begin{equation}\label{eq-target}
\underline w(e):=w(e)-\min\Big\{-\Delta l(e),\,\frac{C(\hat w)+M}{c(e)}\Big\} 
\end{equation}
the lowest value of edge $e\in P_0$ attainable at the cost $C(\hat w)+M$. Then for any
subpath $P\subseteq P_0$,  let
\begin{equation}\label{eq-room}
R(P):=\sum_{e\in P}\big(\hat w(e)-\underline w(e)\big)=\sum_{e\in P}\Big(\hat w(e)-w(e)+\min\Big\{-\Delta l(e),\frac{C(\hat w)+M}{c(e)}\Big\}\Big)
\end{equation}
be  the remaining shortening potential of $P$ from $\hat w$ at this
level. Note that $l(e)\leq\underline w(e)\leq w(e)$,
$c(e)\big(w(e)-\underline w(e)\big)\leq C(\hat w)+M$ and
$\underline w(e)\leq\hat w(e)$ for every $e\in P_0$, so that $R(P)\geq 0$.
Moreover, notice that $\tilde w^1(e)=\underline w(e)$ for $e\in\chi_{t_0}$.

For $v\in P_0$, denote by $d_0(v):=|E(P_{v_1,v})|$ the depth of
$v$ on $P_0$, which is the number of edges from the root $v_1$ to $v$. For each leaf $t_k\in\ddot Y\setminus\{t_0\}$, let $v^{*}(t_k)$ be the
\textbf{merge node} of $t_k$, which is the deepest intersection node on
$P_0\cap P_k$. 

We now describe one iteration of the procedure. It processes $P_0$ from
the root towards $t_0$. At the beginning of the $\tau$-th iteration, the
procedure maintains four objects as follows. \textbf{First,} the processed node $v^{\tau-1}\in P_0$ is the deepest node of $P_0$
whose values have been settled. Every edge of
$P_{v_1,v^{\tau-1}}$ has received its final value. 
Only the part of $P_0$ deeper than $v^{\tau-1}$ can still be modified.
A leaf $t_k\in\ddot Y\setminus\{t_0\}$ is still affected by the
forthcoming adjustments exactly when its root path shares an edge with
this part. We call such a leaf \textbf{active} after processing 
$v^{\tau-1}$. Equivalently, its merge node is strictly deeper than
$v^{\tau-1}$, that is,
$d_0\bigl(v^{*}(t_k)\bigr)>d_0(v^{\tau-1})$.
\textbf{Second,} the active set
$Y^\tau\subseteq\ddot Y\setminus\{t_0\}$ consists of the active
leaves. Hence, later updates can still affect the lengths of their
root-leaf paths. \textbf{Third,} the current vector
$\tilde w^\tau$ holds the weights fixed so far.\textbf{ Fourth,} the witness list $\mathcal T$ is an ordered log of the iterations.
Each entry corresponds to a leaf whose root-leaf path length has just been
reduced to exactly $D$. It records the leaf, its merge node  and the
edge at which this value is reached.

Initialization: $\tau:=1$,  $v^0:=v_1$, $Y^1:=\ddot Y\setminus\{t_0\}$, $\tilde w^1$ is given
by (\ref{tildew1})  and $\mathcal T:=\emptyset$. One invariant is maintained throughout. At the beginning of the $\tau$-th iteration, the weight of the edge $e\in P_{v^{\tau-1},CA(t_0)}$ on the unprocessed part of $P_0$ still equals $\hat w(e)$:
\begin{equation}\label{eq-invariant}
\tilde w^\tau(e)=\hat w(e), e\in P_{v^{\tau-1},CA(t_0)}.
\end{equation}
It holds initially by (\ref{tildew1}) and carries over to each
following iteration as every iteration modifies only its own
subpath $P_{v^{\tau-1},v^\tau}$.

While $Y^\tau\neq\emptyset$, the $\tau$-th iteration proceeds in the following three steps.

\textbf{Step 1:} select the active leaf $t^\tau\in Y^\tau$ with the currently shortest root-leaf path 
and the subpath $P_{v^{\tau-1},v^\tau}$ to be processed in this iteration, where $v^\tau:=v^*(t^\tau)$ is the merge node of $t^\tau$. 
 By (\ref{eq-invariant}),
every edge of this subpath still equals $\hat w(e)$ at the beginning
of the iteration. So the amount by which it can still be shortened at
the present level is exactly $R(P_{v^{\tau-1},v^{\tau}})$ defined in
(\ref{eq-room}).

\textbf{Step 2:} decrease the weights of the edges on the subpath $P_{v^{\tau-1},v^{\tau}}$. Two cases occur, depending on whether the
excess $D^{\tau}:=\tilde w^{\tau}(P_{t^{\tau}})-D$ can be exhausted
within this subpath.
If
$R\bigl(P_{v^{\tau-1},v^{\tau}}\bigr)<D^{\tau}$, the excess cannot be
exhausted within this subpath. Every edge of it is set to its  lowest value 
$\underline w(e)$. The length of every active leaf's root-leaf path
remains strictly above $D$. The new vector is   
\begin{equation}\label{tildewtauand1-1}
\tilde w^{\tau+1}(e):=\left\{\begin{array}{lr}\underline w(e),&e\in P_{v^{\tau-1},v^\tau}\\ \tilde w^\tau(e),&\text{otherwise}\end{array}\right..
\end{equation}
   Otherwise the excess is exhausted within this
subpath. Let $e_b=(v_a,v_b)\in P_{v^{\tau-1},v^{\tau}}$ be the first edge of the subpath  satisfying
\begin{equation}\label{eq-cross}
R\bigl(P_{v^{\tau-1},v_a}\bigr)\le D^\tau\le
R\bigl(P_{v^{\tau-1},v_b}\bigr).
\end{equation}
All edges of $P_{v^{\tau-1},v_a}$ are set to their floors. The edge $e_b$ 
is set to $\hat w(e_b)+\delta$ with
\begin{equation}\label{eq-delta}
\delta:=-D^\tau+R\bigl(P_{v^{\tau-1},v_a}\bigr).
\end{equation}
The edges of $P_{v_b,v^\tau}$ are left unchanged. The new vector is
\begin{equation}\label{tildewtauand1-2}
\tilde w^{\tau+1}(e):=\left\{\begin{array}{lr}\underline w(e),&e\in P_{v^{\tau-1},v_a}\\ \hat w(e_b)+\delta,&e=e_b\\ \tilde w^\tau(e),&\text{otherwise}\end{array}\right..
\end{equation}
The total shortening of this step is then exactly $D^\tau$. Hence the
root-leaf path length of $t^\tau$ is decreased to exactly $D$. The edge $e_b$ is called the \textbf{partial edge} of $t^\tau$. The triple
$(t^\tau,v^\tau,e_b)$ is appended to $\mathcal T$. 

\textbf{Step 3: update the state.} Update
$Y^{\tau+1}:=\{t\in Y^\tau:d_0(v^*(t))>d_0(v^\tau)\}$ and  $\tau:=\tau+1$. The invariant (\ref{eq-invariant}) carries over to the
next iteration.

The following lemma is the inductive step: one iteration maintains
feasibility and shortens $P_0$ as required.

\begin{lem}\label{lem-iter}
Suppose that $\tilde w^\tau$ is feasible for
$\mathbf{SPIT}_{\infty}^{0}(M)$ at the beginning of the $\tau$-th
iteration. 

\emph{(1)} If $R\bigl(P_{v^{\tau-1},v^\tau}\bigr)<D^\tau$, then
$\tilde w^{\tau+1}$ defined as (\ref{tildewtauand1-1}) is feasible for
$\mathbf{SPIT}_{\infty}^{0}(M)$ with
$\tilde w^{\tau+1}(P_0)=\tilde w^\tau(P_0)-R\bigl(P_{v^{\tau-1},v^\tau}\bigr)\le\tilde w^\tau(P_0)$
and $\tilde w^{\tau+1}(P_{t^\tau})>D$.

\emph{(2)} Otherwise, let $e_b=(v_a,v_b)\in
P_{v^{\tau-1},v^\tau}$ be the edge satisfying (\ref{eq-cross}) and
$\tilde w^{\tau+1}$ defined as (\ref{tildewtauand1-2}) is feasible for
$\mathbf{SPIT}_{\infty}^{0}(M)$  with
$\tilde w^{\tau+1}(P_0)=\tilde w^\tau(P_0)-D^\tau\le\tilde w^\tau(P_0)$
and $\tilde w^{\tau+1}(P_{t^\tau})=D$.
\end{lem}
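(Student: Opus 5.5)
The plan is to verify the three groups of constraints of $\mathbf{SPIT}_{\infty}^{0}(M)$ (bounds, budget, side-path lengths) for $\tilde w^{\tau+1}$. Only edges of $P_{v^{\tau-1},v^\tau}$ change, so every check reduces to these edges and to the leaves whose paths pass through them. By the invariant (\ref{eq-invariant}), each such edge equals $\hat w(e)$ before the update. This holds provided the subpath lies above $CA(t_0)$, which is true because every merge node of a side leaf lies on $P_{v_1,CA(t_0)}$. Hence the total shortening applied in Step 2 is exactly $R(P_{v^{\tau-1},v^\tau})$ in case (1), and $R(P_{v^{\tau-1},v_a})-\delta=D^\tau$ in case (2). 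The stated formulas for $\tilde w^{\tau+1}(P_0)$ follow at once, and the nonincrease follows from $R\ge 0$ and from $D^\tau\ge0$ (feasibility of $\tilde w^\tau$).

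\textbf{Bounds and budget.} For edges set to $\underline w(e)$, use the noted facts $l(e)\le\underline w(e)\le w(e)\le u(e)$ and $c(e)(w(e)-\underline w(e))\le C(\hat w)+M$. For the partial edge in case (2), (\ref{eq-cross}) gives $-(\hat w(e_b)-\underline w(e_b))\le\delta\le 0$, since $R(P_{v^{\tau-1},v_b})=R(P_{v^{\tau-1},v_a})+\hat w(e_b)-\underline w(e_b)$. So $\tilde w^{\tau+1}(e_b)$ lies in $[\underline w(e_b),\hat w(e_b)]$. Both endpoints satisfy the bounds and the budget: $\hat w(e_b)$ has cost at most $C(\hat w)$, and the cost $c(e)|x-w(e)|$ is convex in $x$, so it stays within $C(\hat w)+M$ on the whole interval. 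All other edges are unchanged.

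\textbf{Side paths.} This is the main step. Each leaf $t_k$ whose path meets the modified subpath has merge node deeper than $v^{\tau-1}$, so it is active, $t_k\in Y^\tau$. Its path contains an initial portion of $P_{v^{\tau-1},v^\tau}$, namely $P_{v^{\tau-1},v^{*}(t_k)}\cap P_{v^{\tau-1},v^\tau}$. The shortening it suffers is therefore at most the total shortening of the subpath, because all individual decreases are nonnegative. In case (1) that total is $R<D^\tau$. In case (2) it is $D^\tau$. Since $t^\tau$ was chosen with the currently shortest root-leaf path, $\tilde w^\tau(P_k)-D\ge D^\tau$ for every active $t_k$. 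Hence $\tilde w^{\tau+1}(P_k)\ge\tilde w^\tau(P_k)-D^\tau\ge D$, and the inequality is strict in case (1). For $t^\tau$ itself the whole subpath lies on its path, because $v^\tau=v^*(t^\tau)$. This yields the exact equality $\tilde w^{\tau+1}(P_{t^\tau})=D$ in case (2) and $>D$ in case (1). Inactive leaves are untouched.

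The expected obstacle is making precise that the subpath edges all still equal $\hat w$. This requires that $v^\tau$ lies on $P_{v_1,CA(t_0)}$ and that earlier iterations touched only $P_{v_1,v^{\tau-1}}$. I would argue this from (\ref{eq-invariant}) and from the structure of the chain $\chi_{t_0}$, which contains no branching node strictly inside it. It also needs the existence of $e_b$ in (\ref{eq-cross}), which follows from monotonicity and continuity of $R$ along the subpath, with $R(P_{v^{\tau-1},v^{\tau-1}})=0\le D^\tau\le R(P_{v^{\tau-1},v^\tau})$.
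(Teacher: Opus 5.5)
Your proposal is correct and follows essentially the same route as the paper's proof. The invariant puts every edge of $P_{v^{\tau-1},v^\tau}$ at $\hat w(e)$. The bound and budget checks are edgewise, with the partial edge sandwiched between $\underline w(e_b)$ and $\hat w(e_b)$. The side-path constraints hold because every active leaf loses at most the total shortening ($R<D^\tau$ in case (1), exactly $D^\tau$ in case (2)), while $t^\tau$ loses exactly that amount.

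You also make explicit some points the paper uses tacitly:
\begin{itemize}
\item the subpath lies on $P_{v_1,CA(t_0)}$, so the invariant applies;
\item $D^\tau\ge 0$, which gives the nonincrease of $\tilde w^{\tau+1}(P_0)$;
\item $e_b$ exists, since the prefix sums of $R$ along the subpath are nondecreasing from $0$ to a value at least $D^\tau$ (a discrete monotonicity argument rather than continuity).
\end{itemize}
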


\begin{proof}
By the invariant (\ref{eq-invariant}), $\tilde w^\tau(e)=\hat w(e)$ for every $e\in P_{v^{\tau-1},v^\tau}$.     Hence,  each update decreased it by at most $\hat w(e)-\underline w(e)$.
 Also, $D^\tau=\min_{t\in Y^\tau}\bigl(\tilde w^\tau(P_t)-D\bigr)$, so
every active leaf has length at least $D+D^\tau$  with equality at
$t=t^\tau$. 

(1) Feasibility. For $e\in P_{v^{\tau-1},v^\tau}$, we have $l(e)\le\tilde w^{\tau+1}(e)=\underline w(e)\le w(e)\le u(e)$ and 
$c(e)\bigl|\tilde w^{\tau+1}(e)-w(e)\bigr|
=c(e)\bigl(w(e)-\underline w(e)\bigr)\le C(\hat w)+M.$
Every other edge keeps its bounds and cost, since
$\tilde w^{\tau+1}(e)=\tilde w^\tau(e)$ and $\tilde w^\tau$ is
feasible.

For 
$t_j\in\ddot Y\setminus\{t_0\}$, if $t_j$ is no longer active at the
beginning of this iteration, its merge node lies no deeper than
$v^{\tau-1}$. Hence $P_j\cap P_{v^{\tau-1},v^\tau}=\emptyset$ and
$\tilde w^{\tau+1}(P_j)=\tilde w^\tau(P_j)\ge D$. If $t_j$ is still
active, its path length decreases by at most the total shortening of this
iteration and thus
\begin{equation*}
\tilde w^{\tau+1}(P_j)\ge\tilde w^\tau(P_j)-R\bigl(P_{v^{\tau-1},v^\tau}\bigr)
>\tilde w^\tau(P_j)-D^\tau\ge D.
\end{equation*}
Then the feasibility holds. 

In this case every edge of the subpath changes by exactly
$\hat w(e)-\underline w(e)$. The total shortening is therefore
$R\bigl(P_{v^{\tau-1},v^\tau}\bigr)$. Since $P_{t^\tau}$ contains the
whole subpath, we have 
$
\tilde w^{\tau+1}(P_{t^\tau})=\tilde w^\tau(P_{t^\tau})-R\bigl(P_{v^{\tau-1},v^\tau}\bigr)
=D+D^\tau-R\bigl(P_{v^{\tau-1},v^\tau}\bigr)>D$. As only the edges of this subpath are modified on $P_0$, so
$
\tilde w^{\tau+1}(P_0)=\tilde w^\tau(P_0)-R\bigl(P_{v^{\tau-1},v^\tau}\bigr)
\le\tilde w^\tau(P_0).$
Then  case (1) of the this lemma holds.

(2) Feasibility. For $e\in P_{v^{\tau-1},v_a}$ the
bounds and the cost follow as in case (1). Consider $e=e_b$. The two
inequalities of (\ref{eq-cross}) give
$-R(\{e_b\})\le\delta:=-D^\tau+R\bigl(P_{v^{\tau-1},v_a}\bigr)\le0$. Then
$$l(e_b)\le\underline w(e_b)=\hat w(e_b)-R(\{e_b\})
\le\tilde w^{\tau+1}(e_b)=\hat w(e_b)+\delta\le\hat w(e_b)\le u(e_b).$$
Additionally, since $\underline w(e_b)\le\tilde w^{\tau+1}(e_b)\le\hat w(e_b)$
and $\underline w(e_b)\le w(e_b)\le\hat w(e_b)$,
\begin{eqnarray*}
c(e_b)\bigl|\tilde w^{\tau+1}(e_b)-w(e_b)\bigr|
&\le&\max\Bigl\{c(e_b)\bigl(\hat w(e_b)-w(e_b)\bigr),\,
c(e_b)\bigl(w(e_b)-\underline w(e_b)\bigr)\Bigr\}\\
&\le& C(\hat w)+M.
\end{eqnarray*}
Every other edge keeps the value $\tilde w^\tau(e)$ and their bounds and cost constraints hold.

 The total shortening of
this iteration is $R\bigl(P_{v^{\tau-1},v_a}\bigr)+(-\delta)=D^\tau.$
Every path $P_j$( $t_j\in\ddot Y\setminus\{t_0\}$) shares a 
subpath with $P_0$, so its length decreases by at most the total shortening $D^\tau$
of this iteration. If $t_j$ is no longer active at the
beginning of this iteration, then  $P_j\cap P_{v^{\tau-1},v^\tau}=\emptyset$ and
$\tilde w^{\tau+1}(P_j)=\tilde w^\tau(P_j)\ge D$.  If $t_j$
is still active, then $\tilde w^{\tau+1}(P_j)\ge\tilde w^\tau(P_j)-D^\tau\ge D.$ Then the feasibility holds. 

Moreover, $P_{t^\tau}$ contains all the modified edges, so its length
decreases by exactly $D^\tau$ and then $
\tilde w^{\tau+1}(P_{t^\tau})=\tilde w^\tau(P_{t^\tau})-D^\tau=D.$
Since these are the only modified edges on $P_0$, then we have 
$
\tilde w^{\tau+1}(P_0)=\tilde w^\tau(P_0)-D^\tau\le\tilde w^\tau(P_0).
$
Therefore,  case (2) of the this lemma holds.\hb
\end{proof}

When the loop exits, $Y^\tau=\emptyset$, which means no active leaves. The processed part of $P_0$
ends at $v^{\tau-1}$. So the remaining path $P_{v^{\tau-1},CA(t_0)}$
shares no edge with any path $P_j$,
$t_j\in\ddot Y\setminus\{t_0\}$. Its edges can therefore be set to their lowest attainable
values without affecting any other paths. 
This is done in the terminal step
\begin{equation}\label{eq-leftover}
\tilde w^{\tau+1}(e):=\left\{\begin{array}{lr}\underline w(e),&e\in P_{v^{\tau-1},CA(t_0)}\\ \tilde w^\tau(e),&\text{otherwise}\end{array}\right..
\end{equation}
The whole procedure is summarized in Algorithm~\ref{alg-SPIT0}, where the labels $v^{*}(t)$ and $d_0$ can be precomputed in $O(n)$ time by one
depth-first traversal of $\ddot T$.

\begin{breakablealgorithm}
	\renewcommand{\thealgorithm}{1}
	\caption{$[\tilde w^M,\tilde w^M(P_0)]:=\textbf{SPIT}_\infty^0(M)(\ddot T, \ddot Y, w,\hat w, l, u, c, D,M,t_0)$}
	\label{alg-SPIT0}
	\begin{algorithmic}[1]
		\REQUIRE The gate subtree $\ddot T$, its leaf set $\ddot Y$, vectors $w,l,u,c$, the vector $\hat w$,  the given leaf $t_0$, the given values$D$ and $M$. 
		\ENSURE The optimal solution $\tilde w^M$  with  its objective value $\tilde w^M(P_0)$.
		\STATE Initialize $v^0:=v_1$, $\tau:=1$, $Y^1:=\ddot Y\backslash\{t_0\}$ and $\mathcal T:=\emptyset$.
		\STATE Construct $\tilde w^1$ by (\ref{tildew1}).
		\WHILE{$Y^\tau\neq\emptyset$}
		\STATE Let $D^\tau:=\min_{t\in Y^\tau}\left(\tilde w^\tau(P_t)-D\right)$. Let $t^\tau:=\arg\min_{t\in Y^\tau}\left(\tilde w^\tau(P_t)-D\right)$  and $v^{\tau}:=v^{*}(t^{\tau})$ be its merge node.
		\IF{$R\big(P_{v^{\tau-1},v^\tau}\big)<D^\tau$}
			\STATE Update $\tilde w^{\tau+1}$ by (\ref{tildewtauand1-1}).
		\ELSE
			\STATE Determine the edge $e_b=(v_a,v_b)\in P_{v^{\tau-1},v^\tau}$ satisfying (\ref{eq-cross}).  Compute $\delta$ by (\ref{eq-delta}) and update $\tilde w^{\tau+1}$ by (\ref{tildewtauand1-2}).  Append $(t^\tau,v^\tau,e_b)$ to $\mathcal T$.
		\ENDIF
		\STATE Update $Y^{\tau+1}:=\{t\in Y^\tau:d_0(v^*(t))>d_0(v^\tau)\}$ and $\tau:=\tau+1$. 
		\ENDWHILE
		\STATE  Update $\tilde w^{\tau+1}$ as (\ref{eq-leftover}).
		\RETURN ($\tilde w^M:=\tilde w^{\tau+1}$, $\tilde w^M(P_0)$)
	\end{algorithmic}
\end{breakablealgorithm}
The optimal solution $\tilde w^{M}$ of
$\mathbf{SPIT}_{\infty}^{0}(M)$ produced by
Algorithm~\ref{alg-SPIT0} is called the \textbf{canonical optimal
solution} of $\mathbf{SPIT}_{\infty}^{0}(M)$. We next prove that Algorithm \ref{alg-SPIT0} solves the subproblem exactly.
\begin{thm}\label{thm-SPITopt}
For any $M\ge0$, let $\tilde w^{M}$ be the output of
Algorithm~\emph{\ref{alg-SPIT0}}. Then $\tilde w^{M}$ is an optimal
solution of $\mathbf{SPIT}_{\infty}^{0}(M)$  and $\tilde w^{M}(P_{0})$
is the minimum length of $P_{0}$ among all feasible solutions.
\end{thm}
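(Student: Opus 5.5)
Feasibility of the output is already in hand. Lemma~\ref{lem-w1fea} gives feasibility of $\tilde w^1$. Lemma~\ref{lem-iter} propagates it through every iteration. The terminal step (\ref{eq-leftover}) only lowers edges of $P_{v^{\tau-1},CA(t_0)}$ to $\underline w(e)$; these edges lie on no side path once $Y^\tau=\emptyset$, and $\underline w(e)$ respects the bounds and the budget. So the whole task is optimality. The plan is to exhibit, for the output, a certificate lower bound valid for every feasible $w'$ of $\mathbf{SPIT}_\infty^0(M)$.

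\textbf{Decomposition of $P_0$.} The witness list $\mathcal T=\{(t^{\tau_1},v^{\tau_1},e_{b_1}),\dots,(t^{\tau_s},v^{\tau_s},e_{b_s})\}$ cuts $P_0$ into pieces. Let $e_{b_0}$ correspond to the root $v_1$. Each piece runs from just after one partial edge to the next partial edge $e_{b_i}$ inclusive, and a final piece runs from after $e_{b_s}$ to $t_0$.

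On the final piece, every edge of $P_0$ not lying on the initial part of a witness subpath has been set to $\underline w(e)$, which is the pointwise minimum allowed by the bounds and the budget. This holds for non-exhausting iterations, for the terminal step, and for $\chi_{t_0}$ through $\tilde w^1$. The exception is the unchanged tails $P_{v_b,v^\tau}$ after a partial edge, which are handled below. Any feasible $w'$ satisfies $w'(e)\ge\underline w(e)$ edgewise, so these edges contribute at least as much under $w'$.

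\textbf{Witness inequalities.} For a witness $t^{\tau_i}$, the output satisfies $\tilde w^M(P_{t^{\tau_i}})=D$. The key lower bound uses the side path of the witness. On the segment $Q_i:=P_{v^{\tau_{i-1}'},v^{\tau_i}}$ of $P_0$ covered by witness $i$ (from the previous witness's merge node), I will show
\[
w'(Q_i)\ \ge\ D-w'(P_{t^{\tau_i}}\setminus Q_i)\ \ge\ D-\tilde w^M(P_{t^{\tau_i}}\setminus Q_i)=\tilde w^M(Q_i).
\]
The middle inequality needs $\tilde w^M\ge w'$ on $P_{t^{\tau_i}}\setminus Q_i$. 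This holds on the branch $P_{t^{\tau_i}}\setminus P_0$, because there $\tilde w^M$ equals the maximal upgrade $\min\{u,\hat w+M/c\}=\min\{u, w+(C(\hat w)+M)/c\}$. This uses the level-cap form of the canonical $\hat w$, so $\hat w+M/c$ is exactly the top of the budget box. However, on the earlier part of $P_0$ (before $Q_i$) this fails, since $\tilde w^M$ is low there.

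So I instead telescope. I sum the witness constraint for $t^{\tau_s}$, which contains the entire prefix of $P_0$ up to $v^{\tau_s}$. This gives
\[
w'(P_{v_1,v^{\tau_s}})\ge D-w'(P_{t^{\tau_s}}\setminus P_0)\ge D-\tilde w^M(P_{t^{\tau_s}}\setminus P_0)=\tilde w^M(P_{v_1,v^{\tau_s}}),
\]
using only the branch bound. Beyond $v^{\tau_s}$, every edge of $P_0$ is at $\underline w$. The tail after $e_{b_s}$ is untouched only up to $v^{\tau_s}$, which is already inside the prefix. Adding the edgewise bound then gives $w'(P_0)\ge\tilde w^M(P_0)$.

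\textbf{Main obstacle.} The hard case is when the last witness is followed by non-exhausting iterations (case (1) of Lemma~\ref{lem-iter}). I must check that those later subpaths, and the leftover, are set to $\underline w$. They are, by (\ref{tildewtauand1-1}) and (\ref{eq-leftover}). If no witness exists at all, every edge of $P_0$ outside $\chi_{t_0}$ is at $\underline w$, and the edgewise bound alone suffices.

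I also need to confirm that the invariant guarantees the tail $P_{v_b,v^{\tau_s}}$ stays at $\hat w$ without harming the prefix argument. It does, since it lies inside the prefix. The one delicate step I expect is verifying that $\tilde w^M$ on the branch edges of $t^{\tau_s}$ is unchanged by later iterations. It is, because later iterations modify only edges of $P_0$ deeper than $v^{\tau_s}$.
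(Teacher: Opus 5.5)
Your proposal is correct and follows essentially the same route as the paper. The edges of $\ddot T\setminus P_0$ sit at their budget caps $\min\{u(e),w(e)+(C(\hat w)+M)/c(e)\}$ by the level-cap form of $\hat w$. The side path of the last witness then bounds $w'$ on the whole prefix $P_{v_1,v^{*}(t^{\tau_s})}$, and every deeper edge of $P_0$ (case-(1) subpaths, the terminal step, and $\chi_{t_0}$) is at its floor $\underline w(e)$. Your abandoned segment-wise decomposition is unnecessary; the paper likewise proves the prefix inequality for every witness but only uses it for the last one.
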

\begin{proof}
\textbf{Feasibility.} The initial vector $\tilde w^1$ is feasible by
Lemma~\ref{lem-w1fea}. The invariant (\ref{eq-invariant}) holds
throughout. Hence the conclusion of Lemma~\ref{lem-iter} holds at every iteration. In the terminal step (\ref{eq-leftover}), every modified edge $e$ is
set to $\underline w(e)$ with $l(e)\le\underline w(e)\le w(e)$ and then 
$c(e)\bigl(w(e)-\underline w(e)\bigr)\le C(\hat w)+M$. Moreover, no
path $P_j(t_j\in\ddot Y\setminus\{t_0\})$ contains any of these  edges,
otherwise its leaf would still be active and the loop would not
have terminated. Hence the output $\tilde w^M$ is feasible for
$\mathbf{SPIT}_{\infty}^{0}(M)$.

\textbf{Optimality.} Let $w'$ be any feasible solution of
$\mathbf{SPIT}_{\infty}^{0}(M)$. We show that $w'(P_{0})\ge\tilde
w^{M}(P_{0})$ in three steps. 

(1) We first show that for $e\in\ddot T\setminus P_{0}$,  $w'(e)\le\tilde w^{M}(e)$ holds.  The iterations only modify edges of
$P_{0}$. Hence $\tilde w^{M}(e)=\hat w(e)+\min\bigl\{\frac{M}{c(e)},\,
u(e)-\hat w(e)\bigr\}$ by (\ref{tildew1}). Suppose the minimum equals
$u(e)-\hat w(e)$. This happens in particular when $\hat w(e)=u(e)$.
Then $\tilde w^{M}(e)=u(e)\ge w'(e)$. Otherwise the definition of
$\hat w$ gives $\hat w(e)=w(e)+\frac{C(\hat w)}{c(e)}$. Hence
$\tilde w^{M}(e)=w(e)+\frac{C(\hat w)+M}{c(e)}$. If
$w'(e)>\tilde w^{M}(e)$, then
\begin{equation*}
c(e)\bigl|w'(e)-w(e)\bigr|\ge c(e)\bigl(w'(e)-w(e)\bigr)
>c(e)\bigl(\tilde w^{M}(e)-w(e)\bigr)=C(\hat w)+M,
\end{equation*}
contradicting the cost constraint of
$\mathbf{SPIT}_{\infty}^{0}(M)$.

(2)  Let $\mathcal T$ be the
witness list recorded by Algorithm~\ref{alg-SPIT0} during the
execution. We next show that for every witness $(t^{\tau},v^{*}(t^{\tau}),e_b)\in\mathcal T$, 
\begin{equation}\label{eq-sharedprefix}
w'\bigl(P_{v_1,v^{*}(t^{\tau})}\bigr)\ \ge\
\tilde w^{M}\bigl(P_{v_1,v^{*}(t^{\tau})}\bigr).
\end{equation}
Indeed, the iteration that recorded $t^{\tau}$ shortened its path by
exactly $D^{\tau}$, reducing its length to exactly $D$. Every later
iteration modifies only the edges of $P_0$ deeper than
$v^{*}(t^{\tau})$. Hence the final vector still satisfies
$\tilde w^{M}(P_{t^{\tau}})=D$. Since part (1) shows
$w'(P_{t^\tau}\setminus P_0)\le\tilde w^{M}(P_{t^\tau}\setminus P_0)$ 
and $w'(P_{t^\tau})\ge D$, then we have 
$$w'\bigl(P_{v_1,v^{*}(t^{\tau})}\bigr)
 =w'(P_{t^{\tau}})-w'(P_{t^{\tau}}\setminus P_0)
 \ge D-\tilde w^M(P_{t^{\tau}}\setminus P_0)
 =\tilde w^M\bigl(P_{v_1,v^{*}(t^{\tau})}\bigr).$$

(3) We finally show that $w'(P_0)\geq \tilde w^M(P_0)$. Note first that every feasible
$w'$ satisfies $w'(e)\ge\underline w(e)$ for $e\in P_0$. 
Let $\bar v:=v^{\tau^*-1}$, where $\tau^*$ is the value of $\tau$ at
which the loop exits. We distinguish two cases.

Suppose first that $\mathcal T\neq\emptyset$. Let $t_k$ be the last
witness leaf in $\mathcal T$. Then $P_0$ splits into three parts.
\textbf{The first part} $P_{v_1,v^{*}(t_k)}$ is covered by
(\ref{eq-sharedprefix}). Let $\tau_k$ be the iteration that recorded $t_k$. Take any edge $e$
of \textbf{the second part} $P_{v^{*}(t_k),\bar v}$. It was processed in some
iteration $\tau>\tau_k$. This iteration must fall under case (1) of
Lemma~\ref{lem-iter}: if it fell under case (2) of
Lemma~\ref{lem-iter}, it would have appended a witness later than
$t_k$, contradicting the choice of $t_k$. By the update rule
(\ref{tildewtauand1-1}) of that case, the edge $e$ was then set to
its lowest value $\underline w(e)$, and no later iteration touches it again.
 Hence, $\tilde w^M(e)=\underline w(e)\le w'(e)$. \textbf{The third part} $P_{\bar v,t_0}$ also satisfies
$\tilde w^M(e)=\underline w(e)\le w'(e)$ for every edge $e$. Its
edges are of two kinds. An edge on $\chi_{t_0}$ already equals
$\underline w(e)$ in the initial vector $\tilde w^1$ by
(\ref{tildew1}), and no iteration ever modifies it. Each remaining edge is set to $\underline w(e)$ by the terminal step
(\ref{eq-leftover}). Combining the three parts, we obtain
\begin{equation*}
\begin{aligned}
w'(P_0)
&=w'\bigl(P_{v_1,v^{*}(t_k)}\bigr)+w'\bigl(P_{v^{*}(t_k),\bar v}\bigr)
 +w'\bigl(P_{\bar v,t_0}\bigr)\\
&\ge\tilde w^M\bigl(P_{v_1,v^{*}(t_k)}\bigr)
 +\tilde w^M\bigl(P_{v^{*}(t_k),\bar v}\bigr)
 +\tilde w^M\bigl(P_{\bar v,t_0}\bigr)
=\tilde w^M(P_0).
\end{aligned}
\end{equation*}

Suppose that $\mathcal T=\emptyset$. Then every iteration fell under
case (1) of Lemma~\ref{lem-iter}, and every edge of $P_0$ lies either
on $\chi_{t_0}$ or in the subpath set to $\underline w$ by the
terminal step (\ref{eq-leftover}). Hence
$\tilde w^M(e)=\underline w(e)\le w'(e)$ for every $e\in P_0$.
Summing over $P_0$ gives $w'(P_0)\ge\tilde w^M(P_0)$.

In both cases $w'(P_0)\ge\tilde w^M(P_0)$. Hence $\tilde w^M$ is
optimal for $\mathbf{SPIT}_{\infty}^{0}(M)$.
\end{proof}

\subsubsection {The value function and the threshold $M^*$.}\label{sss-V}
For each individual $M$, the subproblem is now settled by Algorithm \ref{alg-SPIT0}. What is still missing is a global view over all budgets $M\ge 0$. We next develop it, which leads to a single threshold $M^{*}$.

We introduce a notation used throughout the rest of this paper. For
a real number $B\ge0$ and an edge $e\in\ddot T$, define the
\textbf{truncated bounds}:
\begin{equation}\label{eq-trunc}
l_{B}(e):=\max\Big\{l(e),\,w(e)-\frac{B}{c(e)}\Big\},\qquad
u_{B}(e):=\min\Big\{u(e),\,w(e)+\frac{B}{c(e)}\Big\}.
\end{equation}
They describe the feasible range of an edge value under a cost budget
$B$: the two requirements $c(e)|\tilde w(e)-w(e)|\le B$ and
$l(e)\le\tilde w(e)\le u(e)$ together are equivalent to the single
inequality $l_{B}(e)\le\tilde w(e)\le u_{B}(e)$. Lemma~\ref{lem-Vprop}
below will use (\ref{eq-trunc}) with $B=C(\hat w)+M$. The attainability
analysis afterwards treats $B$ as a free parameter. The following observation is immediate.
\begin{lem}\label{lem-budget}
For every $B\ge0$ and every edge $e\in\ddot T$,
\begin{equation}\label{eq-sandwich}
l(e)\ \le\ l_B(e)\ \le\ w(e)\ \le\ u_B(e)\ \le\ u(e).
\end{equation}
Moreover, a weight vector $\tilde w$ satisfies the bounds
$l(e)\le\tilde w(e)\le u(e)$ and the budget constraint
$c(e)\big|\tilde w(e)-w(e)\big|\le B$ for all $e\in\ddot T$ if and only if
it satisfies $l_B(e)\le\tilde w(e)\le u_B(e)$ for all $e\in\ddot T$.
\end{lem}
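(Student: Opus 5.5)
The plan is to prove the chain of inequalities in (\ref{eq-sandwich}) first, and then the equivalence one edge at a time. Both parts follow directly from the definitions in (\ref{eq-trunc}), together with the standing assumptions $l(e)\le w(e)\le u(e)$, $c(e)>0$ and $B\ge 0$.

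For the sandwich, I treat the lower and upper sides symmetrically. The outer inequalities hold because $l_B(e)$ is a maximum that includes $l(e)$, and $u_B(e)$ is a minimum that includes $u(e)$. Hence $l(e)\le l_B(e)$ and $u_B(e)\le u(e)$. For the inner inequalities, I check each argument of the max and the min against $w(e)$:
\begin{itemize}
\item $l(e)\le w(e)$ by assumption, and $w(e)-B/c(e)\le w(e)$ since $B/c(e)\ge 0$, so $l_B(e)\le w(e)$;
\item $u(e)\ge w(e)$ and $w(e)+B/c(e)\ge w(e)$, so $u_B(e)\ge w(e)$.
\end{itemize}

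For the equivalence, it suffices to fix one edge $e$, because both the bounds and the budget constraint are imposed edgewise. Dividing by $c(e)>0$, the condition $c(e)|\tilde w(e)-w(e)|\le B$ is equivalent to the two-sided inequality $w(e)-B/c(e)\le\tilde w(e)\le w(e)+B/c(e)$.

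Intersecting this interval with $[l(e),u(e)]$ gives
\[
\max\{l(e),\,w(e)-B/c(e)\}\le\tilde w(e)\le\min\{u(e),\,w(e)+B/c(e)\}.
\]
This is exactly $l_B(e)\le\tilde w(e)\le u_B(e)$, using that a number is at least a maximum if and only if it is at least each argument, and dually for the minimum. Running this over all $e\in\ddot T$ gives both directions at once.

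There is no real obstacle here. The only care point is to use $c(e)>0$ when rewriting the absolute-value constraint, and $B\ge 0$ when comparing $w(e)\pm B/c(e)$ with $w(e)$.
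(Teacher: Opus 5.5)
Your proof is correct and is essentially the argument the paper has in mind: the paper gives no separate proof and calls the lemma an immediate observation from the definitions (\ref{eq-trunc}). Your edgewise check is exactly that direct verification, using $l(e)\le w(e)\le u(e)$, $c(e)>0$, $B\ge 0$, and the intersection of $[l(e),u(e)]$ with $[w(e)-B/c(e),\,w(e)+B/c(e)]$.
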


\begin{remark}\label{rem:kinks}
Each $l_{B}(e)$ is continuous and piecewise affine in $B$ with a single
kink at $B=c(e)\bigl(w(e)-l(e)\bigr)$, and each $u_{B}(e)$ is continuous
and piecewise affine in $B$ with a single kink at
$B=c(e)\bigl(u(e)-w(e)\bigr)$.
\end{remark}

With the subproblem \textbf{SPIT$_\infty^0$(M)} solved exactly, we can study how the minimum length of $P_0$ varies with the extra cost $M$, and thereby determine the optimal value of the problem \textbf{RIOVSPT$\bm{_{\infty}(\ddot T)}$}. For any $M\geq 0$, define the value function
\begin{equation}\label{eq-V}
V(M):=\tilde w^M(P_0),
\end{equation}
where $\tilde w^M$ is the canonical optimal solution of the problem \textbf{SPIT$_\infty^0$(M)}.  By Theorem \ref{thm-SPITopt}, $V(M)$ is the minimum length of $P_0$ over all feasible solutions of \textbf{SPIT$_\infty^0$(M)}.
\begin{lem}\label{lem-Vprop}
The value function $V$ in \emph{(\ref{eq-V})} has the following properties.

\emph{(1)} $V$ is nonincreasing on $[0,+\infty)$.

\emph{(2)} $V$ is continuous and piecewise affine on $[0,+\infty)$.

\emph{(3)} The problem \textbf{RIOVSPT$\bm{_{\infty}(\ddot T)}$} admits a feasible solution with cost at most $C(\hat w)+M$ if and only if $V(M)\leq D$.

\emph{(4)} $V(M)\leq D$ holds for every sufficiently large $M$.
\end{lem}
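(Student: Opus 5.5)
For (1) I will use feasible-set monotonicity rather than the algorithm. By Theorem~\ref{thm-SPITopt}, $V(M)$ is the optimal value of $\mathbf{SPIT}_{\infty}^{0}(M)$. By Lemma~\ref{lem-budget}, the feasible region of that problem is the polytope $\{\tilde w:\ l_B(e)\le\tilde w(e)\le u_B(e),\ \tilde w(P_j)\ge D,\ t_j\in\ddot Y\setminus\{t_0\}\}$ with $B=C(\hat w)+M$. By (\ref{eq-sandwich}) and Remark~\ref{rem:kinks}, $l_B$ is nonincreasing and $u_B$ is nondecreasing in $B$, so these regions are nested and increase with $M$. Hence the minimum of $\tilde w(P_0)$ is nonincreasing in $M$.

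For (2) I will write $V(M)$ as an explicit sum. The argument in the optimality part of Theorem~\ref{thm-SPITopt} identifies $V(M)$ with the maximum of the lower bounds it produces. I will therefore use a min-cut/max style formula. For any prefix node $v$ of $P_0$ that is a merge node, and for the leaves merging at or below it, feasibility forces
\[
\tilde w(P_{v_1,v})\ge D-\min_{t_k:\,v^*(t_k)=v'}\ \bigl(u_B(P_k\setminus P_0)+u_B(P_{v,v'})\bigr)-\cdots,
\]
so the bound is more cleanly stated as $V(M)=\max\bigl(l_B(P_0),\ \max_k [D-u_B(P_k\setminus P_0)+l_B(P_{v^*(t_k),t_0})]\bigr)$ in the appropriate form. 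Rather than relying on that, the cleaner route is LP theory. $V(M)$ is the optimal value of a linear program whose right-hand sides $l_B(e),u_B(e)$ are continuous piecewise-affine functions of $M$, with finitely many kinks by Remark~\ref{rem:kinks}. Between consecutive kinks the right-hand side is affine in $M$. The optimal value of an LP as a function of an affinely parametrized right-hand side is convex and piecewise affine on the set where it is finite, and here it is finite because $\tilde w^1$ is feasible by Lemma~\ref{lem-w1fea} and the feasible region is bounded. It is also continuous on that interval. Gluing finitely many intervals, using continuity of the right-hand side at kinks and continuity of the LP value in the right-hand side over the region where the LP is feasible and bounded, gives (2). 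The step I expect to be the main obstacle is continuity across the kinks. If LP stability feels too abstract, I will instead trace Algorithm~\ref{alg-SPIT0} on each interval. There $\underline w$, $\tilde w^1$ and hence $R(\cdot)$ and $D^\tau$ are affine, the branch choices are fixed on subintervals, and the outputs agree at branch boundaries, because case (1) and case (2) coincide when $R=D^\tau$.

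For (3), the \emph{if} direction goes as follows. Take $\tilde w^M$ with $V(M)\le D$. Since $\tilde w^M\le \hat w$ on $P_0$ and $\hat w(P_0)>D$, the continuous path $\theta\mapsto(1-\theta)\tilde w^M+\theta\hat w$ on $P_0$ (keeping $\tilde w^M$ off $P_0$) stays in the convex feasible region of $\mathbf{SPIT}_{\infty}^{0}(M)$. Note that $\hat w$ on $P_0$ together with $\tilde w^M$ off $P_0$ is feasible, since it dominates $\tilde w^M$ on all side paths. By the intermediate value theorem some $\theta$ gives $P_0$-length exactly $D$, which is a feasible solution of \textbf{RIOVSPT$_\infty(\ddot T)$} with cost at most $C(\hat w)+M$. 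Conversely, any such solution is feasible for $\mathbf{SPIT}_{\infty}^{0}(M)$ with $P_0$-length $D$, so $V(M)\le D$.

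For (4), take $M$ large enough that $B\ge \max_e c(e)\max\{u(e)-w(e),w(e)-l(e)\}$. Then $l_B=l$ and $u_B=u$, and feasibility of the original problem via Theorem~\ref{thm-fea=RIOVSPT_infty}, restricted to $\ddot T$, yields a feasible solution of cost at most $B$. By (3), $V(M)\le D$.
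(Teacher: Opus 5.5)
Your proposal is correct. Parts (1), (3) and (4) follow the same lines as the paper; the genuine difference is in (2).

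\textbf{Part (2).} The paper uses no LP theory. It proves the closed form $V(M)=H(B)$, $B=C(\hat w)+M$, where $H(B)=\max\{l_B(P_0),\ \max_{t_k\in\ddot Y\setminus\{t_0\}}[D+l_B(P_0\setminus P_k)-u_B(P_k\setminus P_0)]\}$. This is exactly the expression you started to write and then set aside. The paper shows that every feasible vector satisfies $\tilde w(P_0)\ge H(B)$. It then exhibits a root-first filling vector attaining $H(B)$: truncated upper bounds off $P_0$ and on a prefix of $P_0$, one partial edge, and truncated lower bounds on the rest. Continuity and piecewise affinity follow because $H$ is a finite maximum of $\pm1$-sums of the functions $l_B(e)$ and $u_B(e)$.

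Your parametric-LP argument is shorter and sound. The kink-gluing issue you worry about disappears if you use duality directly. The dual feasible polyhedron does not depend on $M$, is nonempty and pointed, and the primal is feasible and bounded for every $M\ge0$. Hence $V(M)=\max_y b(M)^{\top}y$ over its finitely many vertices, with $b(M)$ the right-hand-side vector. That is a finite maximum of continuous piecewise affine functions of $M$ on all of $[0,+\infty)$ at once.

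\textbf{What each route buys.} The paper's route gives an elementary, self-contained proof and an explicit formula. In that formula, $H(B)\le D$ is precisely $g_0(B)\ge0$ together with $g_k^{(2)}(B)\ge0$, which anticipates Theorem~\ref{thm-Bfea}. Your route is more general but yields no formula. Your fallback of tracing Algorithm~\ref{alg-SPIT0} would be the most delicate option, since the selected leaf $t^\tau$, and hence the processed subpaths, can change as $M$ varies.

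\textbf{A minor point in (3).} Your hybrid vector needs $\tilde w^M\le\hat w$ on $P_0$. This holds for the output of Algorithm~\ref{alg-SPIT0}, but your sketch does not prove it. The paper avoids the issue by combining $\tilde w^M$ with the pre-adjusted $\hat w$ itself, which is already feasible for $\mathbf{SPIT}_{\infty}^{0}(M)$.
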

\begin{proof}
(1) Let $M\le M'$. The feasible set of
$\mathbf{SPIT}_{\infty}^{0}(M')$ contains that of
$\mathbf{SPIT}_{\infty}^{0}(M)$: the bound constraints and the side-path
constraints are the same in the two problems, while the cost budget
$C(\hat w)+M'$ is larger than $C(\hat w)+M$. Enlarging the feasible set can only lower the minimum, so $V(M')\le V(M)$.  Hence $V$ is nonincreasing
on $[0,+\infty)$.

(2) We determine $V(M)$ by a closed formula. Let $B=C(\hat w)+M$. We first show that no feasible vector can make $P_0$ shorter than an
explicit quantity $H(B)$  and then construct a feasible vector attaining
$H(B)$.
By (\ref{eq-trunc}) and Lemma \ref{lem-budget}, define 
\begin{equation}\label{eq-H}
H(B):=\max\left\{l_B(P_0),\ \max_{t_k\in\ddot Y\setminus\{t_0\}}
\bigl[D+l_B(P_0\setminus P_k)-u_B(P_k\setminus P_0)\bigr]\right\}.
\end{equation}

 Let $\tilde w$ be any feasible vector of
$\mathbf{SPIT}_{\infty}^{0}(M)$. 
We  first have $\tilde w(P_0)\ge l_B(P_0)$ by the edge bounds. 
 Then for every leaf $t_k\in \ddot Y\backslash\{t_0\}$, we have $\tilde w(P_k)\geq D$ and $l_B(e)\le\tilde w(e)\le u_B(e)$. Then  
\[
\tilde w(P_0)=\tilde w(P_k)-\tilde w(P_k\setminus P_0)+\tilde w(P_0\setminus
P_k)\ \ge\ D-u_B(P_k\setminus P_0)+l_B(P_0\setminus P_k),
\]
 Summarizing, every feasible vector $\tilde w$ of
$\mathbf{SPIT}_{\infty}^{0}(M)$ satisfies
\[
\tilde w(P_0)  \ge\ l_B(P_0)
 \text{~and~} 
\tilde w(P_0) \ge  D-u_B(P_k\setminus P_0)+l_B(P_0\setminus P_k)
\quad\bigl(t_k\in\ddot Y\setminus\{t_0\}\bigr).
\]
That is, $\tilde w(P_0)$ is no smaller than every term of (\ref{eq-H}), and
hence no smaller than their maximum. Then  for every feasible vector $\tilde w$, we have 
$\tilde w(P_0)\ \ge\ H(B)$. 
Therefore, by (\ref{eq-V}), we have 
\begin{equation}\label{eq-Hlower}
V(M)\ \ge\ H(B).
\end{equation}
What's more, $\hat w$ can also be seen as a feasible solution of  $\mathbf{SPIT}_{\infty}^{0}(M)$ with $C(\hat w)\leq B$, then  we have 
\begin{equation}\label{eq-Hchain}
H(B)  \le  \hat w(P_0) \le\ u_B(P_0).
\end{equation}

 We exhibit a feasible vector
$w'$ with $w'(P_0)=H(B)$, proving $V(M)\le H(B)$. Distribute the increment 
$\Delta:=H(B)-l_B(P_0)\leq u_B(P_0)-l_B(P_0)$ by (\ref{eq-Hchain}) to fill up the edges on the path $P_0$ from  $v_1$ to  $t_0$.
Then  there is a  edge $e_b=(v_a,v_b)\in
P_0$  satisfying
\begin{equation}\label{eq-crossB}
u_B(P_{v_1,v_a})-l_B(P_{v_1,v_a})\le\Delta\le
u_B(P_{v_1,v_b})-l_B(P_{v_1,v_b}).
\end{equation}
Let 
\begin{equation}\label{eq-construct}
w'(e):=\begin{cases}
u_B(e), & e\in\ddot T\setminus P_{v_a,t_0}\\
l_B(e_b)+\Delta-\bigl(u_B(P_{v_1,v_a})-l_B(P_{v_1,v_a})\bigr), & e=e_b\\
l_B(e), & e\in P_{v_b,t_0}
\end{cases}.
\end{equation}
Clearly,  $l_B(e)\le w'(e)\le u_B(e)$ holds for every edge.
We next show $w'(P_k)\ge D$ for every $t_k\in\ddot Y\setminus\{t_0\}$. Let
$v^*:=v^*(t_k)$ be the merge node of $t_k$, which is the last intersection node on $P_0 \cap P_k$. We have 
$w'(P_k\setminus P_0)=u_B(P_k\setminus P_0)$ and it remains to show
$w'(P_{v_1,v^*})\ge D-u_B(P_k\setminus P_0)$.

If $e_b\notin P_{v_1,v^*}$, then 
\[
w'(P_{v_1,v^*})=u_B(P_{v_1,v^*})\ \ge\ \hat w(P_{v_1,v^*})\ =\
\hat w(P_k)-\hat w(P_k\setminus P_0)\ \ge\ D-u_B(P_k\setminus P_0),
\]
If  $e_b\in P_{v_1,v^*}$, then $P_0\setminus P_k=P_{v^*,t_0}$ and we have $H(B)\geq D+l_B(P_{v^*,t_0})-u_B(P_k\backslash P_0)$ in (\ref{eq-H}). Thus
\[
\begin{aligned}
w'(P_{v_1,v^*})
&=u_B(P_{v_1,v_a})+l_B(e_b)+\Delta-\bigl(u_B(P_{v_1,v_a})-l_B(P_{v_1,v_a})\bigr)
  +l_B(P_{v_b,v^*})\\
&=l_B(P_{v_1,v^*})+\Delta
 =H(B)-l_B(P_{v^*,t_0})\ge D-u_B(P_k\setminus P_0).
\end{aligned}
\]

In both cases $w'(P_{v_1,v^*})\ge D-u_B(P_k\setminus P_0)$ and   we have 
$w'(P_k)\ge D$. Then  $w'$ is feasible for
$\mathbf{SPIT}_{\infty}^{0}(M)$. 
As
\begin{eqnarray*}
    w'(P_0)&=&u_B(P_{v_1,v_a})+l_B(e_b)+\Delta-\bigl(u_B(P_{v_1,v_a})-l_B(P_{v_1,v_a})\bigr)
+l_B(P_{v_b,t_0})\\&=&l_B(P_0)+\Delta=H(B),
\end{eqnarray*}

 we obtain
$V(M)\le w'(P_0)= H(B)$. Together with (\ref{eq-Hlower}), we have
\begin{eqnarray}
    V(M)=H(B).\label{eq-VH}
\end{eqnarray}

 We show that $V$ is continuous and piecewise affine on
$[0,+\infty)$. Consider first the building blocks of $H(B)$, namely
$l_B(e)$ and $u_B(e)$ defined in (\ref{eq-trunc}).
By Remark~\ref{rem:kinks}, each $l_{B}(e)$ and $u_{B}(e)$ is continuous
and piecewise affine, and so is every term of $H(B)$, being a finite sum
of such functions with coefficients $\pm1$. The kinks of each individual
term of $H(B)$ lie in the finite set (\ref{eq-kinks}).
\begin{equation}\label{eq-kinks}
\{0\}\cup\bigl\{c(e)\bigl(w(e)-l(e)\bigr),\,c(e)\bigl(u(e)-w(e)\bigr)\;\big|\;
e\in\ddot T\bigr\}.
\end{equation}
The pointwise maximum of a finite family of continuous piecewise affine functions is itself continuous and piecewise affine, hence so is $H(B)$. Finally, $B=C(\hat w)+M$ is affine in $M$, so $V$ inherits both properties
from $H$ through (\ref{eq-VH}): $V$ is continuous and piecewise affine on
$[0,+\infty)$.

(3) \textbf{Necessity.} Let $\tilde w$ be a feasible solution of
$\mathrm{RIOVSPT}_{\infty}(\ddot T)$ with $C(\tilde w)\le C(\hat w)+M$ and we
show that $V(M)\le D$. Obviously, the vector $\tilde w$ is also feasible for
$\mathbf{SPIT}_{\infty}^{0}(M)$. As $V(M)$ is
the minimum of the length of $P_0$ over all feasible solution for
$\mathbf{SPIT}_{\infty}^{0}(M)$ and thus we have 
$V(M)\ \le\ \tilde w(P_0)\ =\ D.$

\textbf{Sufficiency.} Assume $V(M)\le D$. It remains to produce a feasible solution of
$\mathrm{RIOVSPT}_{\infty}(\ddot T)$ with cost at most $C(\hat w)+M$. We
obtain it as a convex combination of two vectors, $\tilde w^{M}$ and $\hat w$, which are both feasible for
$\mathbf{SPIT}_{\infty}^{0}(M)$ with cost at most $C(\hat w)+M$. 
Recall that $ \tilde w^{M}$ is the canonical optimal solution of the problem $\mathbf{SPIT}_{\infty}^{0}(M)$ with 
$\tilde w^{M}(P_0)=V(M)\le D$ and $\hat w$ is the pre-adjusted 
vector based on the \text{canonical optimal solution} of the problem \textbf{MCSPIT$\bm{_\infty}$}$\bm{(\ddot T\backslash\chi_{t_0})}$ established in
(\ref{eq-free-down}) with $\hat w(P_0)>D$.  
The length of $P_0$  varies linearly from $\tilde w^{M}(P_0)\le D$ to
$\hat w(P_0)>D$, so some convex combination hits $D$ exactly. The
corresponding weight, obtained by solving
$\alpha\hat w(P_0)+(1-\alpha)\tilde w^{M}(P_0)=D$, is
\[
\alpha:=\frac{D-\tilde w^{M}(P_0)}{\hat w(P_0)-\tilde w^{M}(P_0)}
       =\frac{D-V(M)}{\hat w(P_0)-V(M)}.
\]
Notice that the numerator satisfies $D-\tilde w^{M}(P_0)  =  D-V(M) \ge  0$
and the denominator
satisfies $ \hat w(P_0)-\tilde w^{M}(P_0)  =  \hat w(P_0)-V(M)  > D-V(M)\ge 0.$ Then $\alpha\in[0,1)$ holds.
 Construct $\tilde w_{\alpha}:=\alpha\hat w+(1-\alpha)\tilde w^{M}$. We next show its feasibility. 
 About the bound, for every edge $e$, we have 
$l(e)\le\hat w(e)\le u(e)$  and 
$l(e)\le\tilde w^{M}(e)\le u(e)$, then 
$   l(e) =\alpha l(e)+(1-\alpha) l(e) \le  \alpha\hat w(e)+(1-\alpha)
\tilde w^{M}(e)  =  \tilde w_{\alpha}(e)  \le \alpha u(e)+(1-\alpha)
u(e)  =\ u(e). $\\
About the length of paths, We have $\tilde w_{\alpha}(P_0)=\alpha\hat w(P_0)+(1-\alpha)\tilde w^{M}(P_0)
      \ =\ D.$ \\
      For every $t_k\in\ddot Y\setminus\{t_0\}$,
     $\tilde w_{\alpha}(P_k)=\alpha\hat w(P_k)+(1-\alpha)\tilde w^{M}(P_k)
      \ \ge\ \alpha D+(1-\alpha)D\ =\ D. $
Then about its cost,    
         \begin{eqnarray*}
         c(e)\bigl|\tilde w_{\alpha}(e)-w(e)\bigr|
      & \le& \alpha c(e)\bigl|\hat w(e)-w(e)\bigr|
        +(1-\alpha) c(e)\bigl|\tilde w^{M}(e)-w(e)\bigr|\\
       & \le& \alpha C(\hat w)+(1-\alpha)\bigl(C(\hat w)+M\bigr)
        \le C(\hat w)+M \qquad(e\in\ddot T).
    \end{eqnarray*}
Hence,  $\tilde w_{\alpha}$ is the required feasible solution.

(4) Finally, we show that $V(M)\le D$ for every sufficiently large
$M$. Let $\tilde w$  be a feasible solution of the problem
$\mathrm{RIOVSPT}_{\infty}(\ddot T)$. Let
$M:=C(\tilde w)-C(\hat w)\geq 0$ by Lemma \ref{lem-lowbd-hard}. Then $\tilde w$ is also a feasible solution of cost at most $C(\hat w)+M$ of the problem  $\mathrm{RIOVSPT}_{\infty}(\ddot T)$. Hence from  (3), we have  $V(M)\le D$, and monotonicity (1) extends this
to every $M'\ge M$.\hb
\end{proof}

Lemma~\ref{lem-Vprop} characterizes the value function $V$. It is
nonincreasing, continuous and piecewise affine. Moreover, by
Lemma~\ref{lem-Vprop}(3), whether $V(M)\le D$ can be decided by testing
the budget $C(\hat w)+M$ for attainability. Hence determining the optimal
value of $\mathrm{RIOVSPT}_{\infty}(\ddot T)$ reduces to locating the
threshold where $V$ crosses the target $D$.

\begin{lem}\label{lem-optval}
The optimal value of the problem $\mathrm{RIOVSPT}_{\infty}(\ddot T)$
equals $C(\hat w)+M^{*}$, where
\begin{equation}\label{eq-Mstar}
M^{*}:=\min\bigl\{M\ge 0\mid V(M)\le D\bigr\} 
\end{equation}
and the minimum is attained.
\end{lem}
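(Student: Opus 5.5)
The plan is to derive Lemma~\ref{lem-optval} directly from Lemma~\ref{lem-Vprop}, combined with the universal lower bound of Lemma~\ref{lem-lowbd-hard}. The argument has three parts: show that the set $S:=\{M\ge0\mid V(M)\le D\}$ is nonempty and closed, so that its infimum is attained; then establish the inequality ``optimal value $\le C(\hat w)+M^*$''; then establish the reverse inequality.

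\textbf{Existence of $M^*$.} By Lemma~\ref{lem-Vprop}(4), $S\neq\emptyset$. Since $V$ is continuous on $[0,+\infty)$ by Lemma~\ref{lem-Vprop}(2), $S=V^{-1}((-\infty,D])\cap[0,+\infty)$ is closed in $[0,+\infty)$. It is also bounded below by $0$. Hence $M^*:=\inf S$ belongs to $S$, so the minimum in (\ref{eq-Mstar}) is attained and $V(M^*)\le D$. By monotonicity (Lemma~\ref{lem-Vprop}(1)), $S=[M^*,+\infty)$, although only attainment is needed here.

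\textbf{Upper bound.} Since $V(M^*)\le D$, the sufficiency direction of Lemma~\ref{lem-Vprop}(3) gives a feasible solution of $\mathrm{RIOVSPT}_{\infty}(\ddot T)$ with cost at most $C(\hat w)+M^*$. This is the explicit convex combination $\tilde w_\alpha$. Therefore the optimal value is at most $C(\hat w)+M^*$.

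\textbf{Lower bound.} Let $\tilde w$ be any feasible solution and set $M:=C(\tilde w)-C(\hat w)$. By Lemma~\ref{lem-lowbd-hard}, applied on $\ddot T$ exactly as in the proof of Lemma~\ref{lem-Vprop}(4), we have $M\ge0$. Since $\tilde w$ is feasible with cost at most $C(\hat w)+M$, the necessity direction of Lemma~\ref{lem-Vprop}(3) yields $V(M)\le D$, i.e.\ $M\in S$. Hence $M\ge M^*$ and $C(\tilde w)\ge C(\hat w)+M^*$. Combining both bounds, the optimal value equals $C(\hat w)+M^*$, and $\tilde w_\alpha$ at $M=M^*$ is an optimal solution.

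\textbf{Main obstacle.} The only delicate point is the appeal to Lemma~\ref{lem-lowbd-hard} on the subtree $\ddot T$, where $\hat w$ is the canonical solution on $\ddot T\setminus\chi_{t_0}$ extended by $w$ on $\chi_{t_0}$ (and then pre-adjusted). I would handle it by lifting $\tilde w$ through Lemma~\ref{lem-lift}, restricting the lift to $\ddot T\setminus\chi_{t_0}$, and checking that the restriction is feasible for $\mathbf{MCSPIT}_\infty(\ddot T\setminus\chi_{t_0})$. This gives $C(\tilde w)\ge C(\hat w)$, since the pre-adjustment (\ref{eq-free-down}) does not change $C(\hat w)$. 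If that verification stalls, I can alternatively restrict $M$ to $\max\{0,\cdot\}$: the set $S$ only contains nonnegative $M$, and a solution of cost below $C(\hat w)$ would still be feasible at budget $C(\hat w)+0$, so it would witness $0\in S$ and the bound $C(\tilde w)\ge C(\hat w)+M^*$ would need the lemma anyway.
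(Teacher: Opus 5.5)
Your proposal is correct and follows essentially the same route as the paper: attainment of $M^{*}$ from Lemma~\ref{lem-Vprop}(2),(4), the upper bound from the sufficiency direction of Lemma~\ref{lem-Vprop}(3), and the lower bound from Lemma~\ref{lem-lowbd-hard} combined with the necessity direction. Two minor differences: your attainment argument (closed, nonempty, bounded below) does not need the monotonicity of Lemma~\ref{lem-Vprop}(1), and your explicit check that the lifted solution restricted to $\ddot T\setminus\chi_{t_0}$ is feasible for $\mathbf{MCSPIT}_\infty(\ddot T\setminus\chi_{t_0})$ makes explicit a step the paper only cites; the closing ``fallback'' is unnecessary and, as you yourself note, does not avoid the lemma.
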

\begin{proof}
The minimum in (\ref{eq-Mstar}) is attained, because the set is
nonempty (Lemma~\ref{lem-Vprop}(4)), upward closed ($V$ is nonincreasing,
Lemma~\ref{lem-Vprop}(1)) and closed ($V$ is continuous,
Lemma~\ref{lem-Vprop}(2)), hence a closed ray $[M^{*},+\infty)$
containing its left endpoint $M^{*}$.

For the lower bound, Lemma \ref{lem-lowbd-hard} shows that no feasible solution has a cost
below $C(\hat w)$. Let   $\tilde w$ be a feasible solution of the problem $\mathrm{RIOVSPT}_{\infty}(\ddot T)$ and record its
cost as $C(\tilde w)=C(\hat w)+M'$ with $M'\ge0$. Then $\tilde w$ is
feasible for $\mathbf{SPIT}_{\infty}^{0}(M')$ and $\tilde w(P_{0})=D$. 
Hence $V(M')\le D$ by Lemma~\ref{lem-Vprop}(3) and then  $M'\ge M^{*}$ by (\ref{eq-Mstar}), that
is, $C(\tilde w)=C(\hat w)+M'\ge C(\hat w)+M^{*}$.

For the upper bound, $M^{*}$ itself satisfies $V(M^{*})\le D$, and
Lemma~\ref{lem-Vprop}(3) provides a feasible solution of cost at most
$C(\hat w)+M^{*}$. Combining both bounds, the optimal value equals
$C(\hat w)+M^{*}$.\hb
\end{proof}

 By continuity and
minimality, $V(0)>D$ implies $V(M^{*})=D$. In this case an optimal
solution is $\tilde w^{M^{*}}$, while otherwise the convex combination
constructed in the proof of Lemma~\ref{lem-Vprop}(3) is optimal. The
case $M^{*}=0$ is settled in the next lemma.

\begin{lem}\label{lem-trivial}
An optimal solution of $\mathrm{RIOVSPT}_{\infty}(\ddot T)$ is given as
follows. If $V(0)>D$, it is $\tilde w^{M^{*}}$. If $V(0)=\tilde w^0(P_0)\le D$, it is the
convex combination
\begin{equation}\label{eq-walpha}
\tilde w_{\alpha}:=\alpha\hat w+(1-\alpha)\tilde w^{0},
\qquad
\alpha:=\frac{D-\tilde w^{0}(P_{0})}{\hat w(P_{0})-\tilde w^{0}(P_{0})}.
\end{equation}
\end{lem}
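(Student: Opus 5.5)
We treat the two cases of Lemma~\ref{lem-trivial} separately. In each case we exhibit a vector that is feasible for $\mathrm{RIOVSPT}_{\infty}(\ddot T)$ and whose cost is at most $C(\hat w)+M^{*}$. Lemma~\ref{lem-optval} says this is the optimal value, so such a vector is optimal.

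\textbf{Case $V(0)>D$.} Then $M^{*}>0$. The set $\{M\ge0\mid V(M)\le D\}$ is the closed ray $[M^{*},+\infty)$, and $V$ is continuous by Lemma~\ref{lem-Vprop}(2). Since $V(M)>D$ for $M<M^{*}$, continuity gives $V(M^{*})\ge D$. Since $M^{*}$ lies in the set, $V(M^{*})\le D$. Hence $V(M^{*})=D$.

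By Theorem~\ref{thm-SPITopt}, the canonical solution $\tilde w^{M^{*}}$ is feasible for $\mathbf{SPIT}_{\infty}^{0}(M^{*})$. So it satisfies the bounds $l\le\tilde w^{M^{*}}\le u$, the side-path constraints $\tilde w^{M^{*}}(P_k)\ge D$ for $t_k\in\ddot Y\setminus\{t_0\}$, and the cost bound $C(\tilde w^{M^{*}})\le C(\hat w)+M^{*}$. Moreover $\tilde w^{M^{*}}(P_0)=V(M^{*})=D$. These are exactly the constraints of $\mathrm{RIOVSPT}_{\infty}(\ddot T)$, so $\tilde w^{M^{*}}$ is feasible with cost at most the optimal value, and therefore it is optimal.

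\textbf{Case $V(0)\le D$.} Then $M^{*}=0$, and the optimal value is $C(\hat w)$. Apply the sufficiency argument of Lemma~\ref{lem-Vprop}(3) with $M=0$. Both $\hat w$, the pre-adjusted vector of (\ref{eq-free-down}), and $\tilde w^{0}$ are feasible for $\mathbf{SPIT}_{\infty}^{0}(0)$.

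The pre-adjustment keeps $\hat w$ within the truncated bounds at level $C(\hat w)$ and does not affect the side paths. The reason is that the edges of $\chi_{t_0}$ lie on no $P_k$ with $k\neq0$, since $\chi_{t_0}$ is the terminal chain ending at the leaf $t_0$. Hence $\hat w(P_k)\ge D$ for every side path, and $\hat w(P_0)>D$ by the case condition $\Delta\hat l(\chi_{t_0})>D^{0}$.

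It follows that $\alpha$ in (\ref{eq-walpha}) is well defined and lies in $[0,1)$. The denominator is positive because $\hat w(P_0)>D\ge \tilde w^{0}(P_0)$. The convex combination then satisfies:
\begin{itemize}
\item $\tilde w_\alpha(P_0)=D$ by the choice of $\alpha$;
\item $\tilde w_\alpha(P_k)\ge D$ for every side path, by convexity;
\item the bounds $l\le \tilde w_\alpha\le u$, by convexity;
\item $C(\tilde w_\alpha)\le C(\hat w)$, by convexity of $|\cdot|$.
\end{itemize}
By Lemma~\ref{lem-lowbd-hard}, no feasible solution costs less than $C(\hat w)$, so $\tilde w_\alpha$ is optimal.

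The only delicate step is the first case: we must show that $V(M^{*})=D$ exactly rather than $V(M^{*})<D$. This is what lets $\tilde w^{M^{*}}$ itself, with no convex combination, meet the equality constraint on $P_0$. The argument uses continuity of $V$ together with minimality of $M^{*}$, and needs $M^{*}>0$ so that there are points $M<M^{*}$ in $[0,+\infty)$ to approach from.
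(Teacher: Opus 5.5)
Your proof is correct and follows the paper's argument. For $V(0)>D$ you force $V(M^{*})=D$ from continuity and minimality; you use a left-limit where the paper uses an $\varepsilon$-contradiction, and you state explicitly that $M^{*}>0$, which the paper leaves implicit. For $V(0)\le D$ you use the convex-combination construction of Lemma~\ref{lem-Vprop}(3) at $M=0$ together with the lower bound of Lemma~\ref{lem-lowbd-hard}, exactly as the paper does.
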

\begin{proof}
We treat the two cases separately.
 
 If $V(0)>D$, we claim that $V(M^{*})=D$. First,
$V(M^{*})\le D$. This is because $M^{*}$ belongs to the set of
(\ref{eq-Mstar}). Second, $V(M^{*})$ cannot be strictly smaller than $D$.
Indeed, suppose $V(M^{*})<D$. By the continuity of $V$, there exists
$\varepsilon\in(0,M^{*})$ such that $V(M^{*}-\varepsilon)<D$. Then
$M^{*}-\varepsilon$ also belongs to the set of (\ref{eq-Mstar}). This
contradicts the minimality of $M^{*}$. Hence $V(M^{*})=D$.
Consequently, $\tilde w^{M^{*}}(P_{0})=D$. The other constraints of
$\mathrm{RIOVSPT}_{\infty}(\ddot T)$ hold as well and thus $\tilde w^{M^{*}}$
is feasible for $\mathrm{RIOVSPT}_{\infty}(\ddot T)$. Its cost is 
$C(\hat w)+M^{*}$. By Lemma~\ref{lem-optval}, this is the optimal value.
Therefore $\tilde w^{M^{*}}$ is optimal.

If $V(0)\le D$, then $0$ belongs to the set 
(\ref{eq-Mstar}). Hence $M^{*}=0$. By Lemma~\ref{lem-optval}, the optimal
value equals $C(\hat w)+M^{*}=C(\hat w)$. It remains to find a feasible solution of cost at most $C(\hat w)$. Then we apply Lemma~\ref{lem-Vprop}(3) at $M=0$. Its premise is
$V(0)\le D$, which is precisely the standing assumption of this case. The
lemma then yields a feasible solution, namely the convex combination
(\ref{eq-walpha}). It remains to check its cost. On the one hand,
Lemma~\ref{lem-Vprop}(3) gives $C(\tilde w_{\alpha})\le C(\hat w)$. On
the other hand, $\tilde w_{\alpha}$ is feasible for
$\mathrm{RIOVSPT}_{\infty}(\ddot T)$, so Lemma \ref{lem-lowbd-hard} gives
$C(\tilde w_{\alpha})\ge C(\hat w)$. Hence $C(\tilde w_{\alpha})=C(\hat
w)$. Therefore $\tilde w_{\alpha}$ is optimal.\hb
\end{proof}

\subsubsection {Attainability of a budget: the margin criterion.}\label{sss-margins}
It remains to find the smallest $M$ with $V(M)\le D$.  We certify budget attainability through truncated-bound margins. The optimal value is the smallest attainable budget.

From now on we assume $V(0)>D$, then $V(M^{*})=D$ by Lemma~\ref{lem-trivial}. It remains
to determine $M^{*}$ exactly. We test the condition $V(M)\le D$ directly:
by Lemma~\ref{lem-Vprop}(3), it holds if and only if the budget $B:=C(\hat w)+M$ is \textbf{attainable}.
To make this test operational, we express attainability as numerical
conditions on $B$. By definition, the budget $B$ is \textbf{attainable} if some
vector $\tilde w$ meets three groups of requirements of the problem \textbf{RIOVSPT$_{\infty}(\ddot T)$}: the original bounds
$l\le\tilde w\le u$, the budget $c(e)\big|\tilde w(e)-w(e)\big|\le B$ on
every edge, and the path constraints $\tilde w(P_k)\ge D(t_k\in \ddot Y\backslash\{t_0\})$ together with
$\tilde w(P_0)=D$. The first two groups concern single edges only, and
Lemma \ref{lem-budget} fuses them into one inequality
$l_B(e)\le\tilde w(e)\le u_B(e)$: the budget is not dropped but folded
into the bounds, so that a larger $B$ means a wider interval
$[l_B(e),u_B(e)]$. From then on $B$ no longer appears in the constraint
system, and attainability becomes the plain feasibility of a  \textbf{RIOVSPT$_{\infty}(\ddot T)$}
instance whose bounds are $(l_B,u_B)$. Such feasibility is characterized
by Theorem \ref{thm-fea=RIOVSPT_infty}. Translating its two conditions edge by edge yields exactly
the following \textbf{margins}.
 For a budget
$B\ge 0$, define
\begin{eqnarray}
g_{0}(B)&:=&D-l_{B}(P_{0}),\nonumber\\
g_{k}^{(1)}(B)&:=&u_{B}(P_{k})-D,\qquad\qquad\qquad\ t_{k}\in\ddot Y,\label{eq-margins}\\
g_{k}^{(2)}(B)&:=&u_{B}(P_{k}\setminus P_{0})-l_{B}(P_{0}\setminus P_{k}),\qquad t_{k}\in\ddot Y\setminus\{t_{0}\}.\nonumber
\end{eqnarray}
Denote by $\mathcal G$ the set consisting of all these margins.
\begin{thm}\label{thm-Bfea}
Let $B\ge 0$. The budget $B$ is attainable if and only if $g(B)\ge 0$ for every $g\in\mathcal{G}$.
\end{thm}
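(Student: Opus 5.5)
By Lemma~\ref{lem-budget}, $B$ is attainable exactly when the instance of \textbf{RIOVSPT$_{\infty}(\ddot T)$} with bounds $(l_B,u_B)$ in place of $(l,u)$ is feasible. So the plan is to apply Theorem~\ref{thm-fea=RIOVSPT_infty} to the subtree $\ddot T$ with these truncated bounds and show that its two conditions are equivalent, term by term, to the nonnegativity of the margins in $\mathcal G$.

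\textbf{Step 1 (reduction).} Fix $B\ge0$. A vector $\tilde w$ satisfies $l\le\tilde w\le u$ and $c(e)|\tilde w(e)-w(e)|\le B$ on every edge of $\ddot T$ if and only if $l_B\le\tilde w\le u_B$, by Lemma~\ref{lem-budget}. The path constraints $\tilde w(P_k)\ge D$ for $t_k\in\ddot Y\setminus\{t_0\}$ and $\tilde w(P_0)=D$ do not involve $B$. Hence $B$ is attainable if and only if the RIOVSPT instance on $\ddot T$ with data $(l_B,u_B)$ is feasible. This instance is legitimate input for Theorem~\ref{thm-fea=RIOVSPT_infty}, since (\ref{eq-sandwich}) gives $l_B(e)\le w(e)\le u_B(e)$ and all quantities are finite.

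\textbf{Step 2 (translation).} Theorem~\ref{thm-fea=RIOVSPT_infty} has two conditions, which we treat separately.

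Condition (1) reads $l_B(P_0)\le D\le\min_{t_k\in\ddot Y}u_B(P_k)$. Its left inequality is $g_0(B)\ge0$. Its right inequality is $g_k^{(1)}(B)\ge0$ for every $t_k\in\ddot Y$, including $t_0$.

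Condition (2) reads
\[
l_B(P_0)\le\min_{t_k\in\ddot Y}\bigl(l_B(P_k\cap P_0)+u_B(P_k\setminus P_0)\bigr).
\]
Use the decomposition $l_B(P_0)=l_B(P_0\cap P_k)+l_B(P_0\setminus P_k)$. Then the inequality for the index $k$ becomes $u_B(P_k\setminus P_0)-l_B(P_0\setminus P_k)\ge0$, which is $g_k^{(2)}(B)\ge0$. For $k=0$ both $P_0\setminus P_0$ and $P_0\setminus P_0$ are empty, so that term is trivially $0\ge0$. This is why $g^{(2)}$ is only indexed over $\ddot Y\setminus\{t_0\}$.

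Together the two conditions are exactly $g(B)\ge0$ for all $g\in\mathcal G$, which proves the equivalence.

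\textbf{Main obstacle.} The delicate point is that Theorem~\ref{thm-fea=RIOVSPT_infty} is stated for the whole tree $T$ with its original bounds. We must make sure it transfers verbatim to the rooted subtree $\ddot T$, which has root $v_1$, gate edge $\ddot e$ and leaf set $\ddot Y$, with arbitrary finite bounds satisfying $l_B\le w\le u_B$.

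The cited proof is measurement-independent and uses only the tree structure and the bounds, so it should apply as is. If we want a self-contained check, sufficiency can be shown directly. Start from $u_B$ everywhere, then lower the edges of $P_0$ sequentially from the root side toward $t_0$ down to $l_B$ until $P_0$ has length $D$, using the same cut-at-$e_b$ construction as in (\ref{eq-construct}). Each side path $P_k$ then keeps length at least $\min\{u_B(P_k),\,D+u_B(P_k\setminus P_0)-l_B(P_0\setminus P_k)\}\ge D$ by the margins. Necessity follows immediately from summing the bounds along the paths, exactly as in the lower-bound argument for $H(B)$ in Lemma~\ref{lem-Vprop}(2).
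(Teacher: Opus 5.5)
Your proof is correct and is essentially the paper's own: Lemma~\ref{lem-budget} turns attainability into feasibility of the $(l_B,u_B)$-instance on $\ddot T$, and the two conditions of Theorem~\ref{thm-fea=RIOVSPT_infty} translate term by term into the margins, with the $t_0$ term of condition (2) trivial. The only slip is in your optional self-contained check: the edges of $P_0$ must be lowered starting at the $t_0$ end (equivalently, raised from $l_B$ starting at the root), as in (\ref{eq-construct}), because lowering root-side edges first can break a side path even when all margins are nonnegative (take $P_0=e_1e_2$ with $[l_B,u_B]=[0,10]$ on both edges, $D=10$, and a side leaf attached at the lower endpoint of $e_1$ by an edge with $l_B=u_B=0$; lowering $e_1$ first leaves that side path with length $0$).
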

\begin{proof}
By Lemma~\ref{lem-budget}, the budget $B$ is attainable if and only if
the following truncated instance is feasible:
\textbf{RIOVSPT$_{\infty}(\ddot T)$} with the bound vectors $(l,u)$
replaced by $(l_{B},u_{B})$. 
It remains to characterize the feasibility of the truncated instance.
By Theorem~\ref{thm-fea=RIOVSPT_infty}, it is feasible if and only if
\begin{align}
l_{B}(P_{0})&\;\le\;D\;\le\;\min_{t_{k}\in\ddot Y}\,u_{B}(P_{k}),
\label{eq:feas-cond1}\\
l_{B}(P_{0})&\;\le\;\min_{t_{k}\in\ddot Y}\Bigl(l_{B}(P_{k}\cap P_{0})
+u_{B}(P_{k}\setminus P_{0})\Bigr).
\label{eq:feas-cond2}
\end{align}
Condition (\ref{eq:feas-cond1}) is exactly $g_{0}(B)\ge 0$ together with
$g_{k}^{(1)}(B)\ge 0$ for every $t_{k}\in\ddot Y$. For
(\ref{eq:feas-cond2}), separate $t_{0}$ from the other leaves. If
$t_{k}=t_{0}$, the right-hand side equals $l_{B}(P_{0})$ and the
inequality is trivial. If $t_{k}\in\ddot Y\setminus\{t_{0}\}$, then
$l_{B}(P_{0}\setminus P_{k})=l_{B}(P_{0})-l_{B}(P_{k}\cap P_{0})$, and
the inequality rewrites as
$l_{B}(P_{0}\setminus P_{k})\le u_{B}(P_{k}\setminus P_{0})$,
which is precisely $g_{k}^{(2)}(B)\ge 0$.\hb
\end{proof}

Define the breakpoint set
\begin{equation}\label{eq-Bset}
\mathcal B:=\{0\}\cup\big\{c(e)\big(w(e)-l(e)\big),\ c(e)\big(u(e)-w(e)\big)\,\big|\,e\in\ddot T\big\}. 
\end{equation}
Let
\begin{equation}\label{eq-Bstar}
B^{*}:=\min\big\{B\ge 0 \big| g(B)\ge 0 \text{~for all~ }g\in\mathcal{G}\big\}.
\end{equation}

\begin{lem}\label{lem-monotone}
Every margin in \emph{(\ref{eq-margins})} is a nondecreasing, continuous
and piecewise affine function of $B$ on $[0,+\infty)$. Each of its
breakpoints belongs to the set $\mathcal B$ in \emph{(\ref{eq-Bset})}.
Consequently, the set of attainable budgets is a closed ray
$[B^{*},+\infty)$. In particular, the minimum in \emph{(\ref{eq-Bstar})}
is attained.
\end{lem}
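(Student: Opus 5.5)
The plan is to work edge by edge and then sum. By Remark~\ref{rem:kinks}, for each edge $e$ the function $B\mapsto u_B(e)=\min\{u(e),w(e)+B/c(e)\}$ is the minimum of a constant and an increasing affine function. It is therefore continuous, nondecreasing and piecewise affine, with its only kink at $c(e)(u(e)-w(e))\in\mathcal B$.

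Symmetrically, $B\mapsto l_B(e)=\max\{l(e),w(e)-B/c(e)\}$ is continuous, nonincreasing and piecewise affine, with its only kink at $c(e)(w(e)-l(e))\in\mathcal B$. Consequently $-l_B(e)$ is nondecreasing.

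Each margin in (\ref{eq-margins}) is a constant plus a finite sum of terms of the form $u_B(e)$ or $-l_B(e)$, always with coefficient $+1$:
\begin{itemize}
\item $g_0$ is built from $-l_B(P_0)$;
\item $g^{(1)}_k$ is built from $u_B(P_k)$;
\item $g^{(2)}_k$ is built from $u_B(P_k\setminus P_0)-l_B(P_0\setminus P_k)$.
\end{itemize}
A finite sum of continuous, nondecreasing, piecewise affine functions has all three properties. Its breakpoints are contained in the union of the summands' breakpoints, hence in $\mathcal B$. Adding $0$ to $\mathcal B$ accounts for the left endpoint of the domain.

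Next I characterise the set $A:=\{B\ge0\mid g(B)\ge0\ \forall g\in\mathcal G\}$, which by Theorem~\ref{thm-Bfea} is exactly the set of attainable budgets.
\begin{itemize}
\item \textbf{Upward closed.} If $B\in A$ and $B'\ge B$, then monotonicity gives $g(B')\ge g(B)\ge0$ for every $g$.
\item \textbf{Closed.} $A=\bigcap_{g\in\mathcal G}g^{-1}([0,\infty))\cap[0,\infty)$ is a finite intersection of preimages of closed sets under continuous maps.
\item \textbf{Nonempty.} Take $M$ large as in Lemma~\ref{lem-Vprop}(4). Then $V(M)\le D$, so by Lemma~\ref{lem-Vprop}(3) there is a feasible solution of cost at most $C(\hat w)+M$. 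That solution certifies that $B=C(\hat w)+M$ is attainable.
\end{itemize}

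A nonempty, closed, upward-closed subset of $[0,\infty)$ has a least element $B^*$, since its infimum belongs to the set by closedness. It therefore equals $[B^*,+\infty)$, so the minimum in (\ref{eq-Bstar}) is attained.

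The only delicate point is the nonemptiness step. It relies on the standing feasibility of $\mathrm{RIOVSPT}_\infty(\ddot T)$, which holds because the overall problem is assumed feasible by Theorem~\ref{thm-fea=RIOVSPT_infty}. The alternative is to note directly that for $B\ge\max\mathcal B$ the truncated bounds coincide with $(l,u)$, so attainability reduces to the original feasibility conditions. I would present the second route, since it is self-contained: beyond $\max\mathcal B$ every margin is constant and equals its untruncated value, which is nonnegative by Theorem~\ref{thm-fea=RIOVSPT_infty}.
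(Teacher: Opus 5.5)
Your proof is correct and takes essentially the same approach as the paper. You get the edgewise monotonicity and single kinks of $l_B(e)$ and $u_B(e)$, sum them into the margins, and show the attainable set is closed and upward closed. You then obtain nonemptiness at $B_{\max}=\max\mathcal B$, where the truncated bounds revert to $(l,u)$ and the original feasibility conditions, restricted to $\ddot Y$, make every margin nonnegative.
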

\begin{proof}
We show the properties of the margins. By Remark~\ref{rem:kinks}, each
$l_{B}(e)$ enters every margin with a single kink, nonincreasingly, and
each $u_{B}(e)$ nondecreasingly. Hence every margin $g\in\mathcal{G}$ is
nondecreasing, continuous and piecewise affine, and its breakpoints lie
in $\mathcal{B}$ of (\ref{eq-Bset}).

It remains to examine the structure of the attainable budgets. By
Theorem~\ref{thm-Bfea}, the set of attainable budgets is
$\{B\ge 0:\ g(B)\ge 0\ \text{for all }g\in\mathcal{G}\}.$
Each defining inequality is closed (its left side is continuous) and
upward closed (its left side is nondecreasing), so the set itself is
closed and upward closed. It remains to show it is nonempty. Let
$B_{\max}:=\max\mathcal{B}$. By the definition of $\mathcal{B}$,
$B_{\max}$ is no smaller than any kink location in
Remark~\ref{rem:kinks}, so $l_{B_{\max}}(e)=l(e)$ and
$u_{B_{\max}}(e)=u(e)$ for every edge $e$. Hence the margins at
$B_{\max}$ coincide with the two feasibility conditions of
Theorem~\ref{thm-fea=RIOVSPT_infty}, which hold by the standing
feasibility assumption. Thus $B_{\max}$ is attainable and the set is
nonempty. A nonempty, closed and upward closed subset of $[0,+\infty)$
is a closed ray $[B^{*},+\infty)$ containing its left endpoint. Hence
the minimum in (\ref{eq-Bstar}) is attained.\hb
\end{proof}

\begin{thm}\label{thm-optval}
The optimal value of the problem \textbf{RIOVSPT$\bm{_{\infty}(\ddot T)}$} equals $B^*$ in \emph{(\ref{eq-Bstar})}. Moreover, $B^*\geq C(\hat w)$.
\end{thm}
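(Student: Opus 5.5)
The plan is to identify the optimal value of \textbf{RIOVSPT$_{\infty}(\ddot T)$} with the smallest attainable budget, and then read off both claims from the results already established.

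First, the optimal value equals $\min\{B\ge 0\mid B \text{ attainable}\}$. This is immediate from the definition of attainability. A budget $B$ is attainable exactly when some vector $\tilde w$ satisfies $l\le\tilde w\le u$, the per-edge budget $c(e)|\tilde w(e)-w(e)|\le B$, and the path constraints of \textbf{RIOVSPT$_{\infty}(\ddot T)$}. Equivalently, some feasible $\tilde w$ of \textbf{RIOVSPT$_{\infty}(\ddot T)$} has $C(\tilde w)\le B$. We prove the two inequalities as follows.
\begin{itemize}
\item[] \emph{Lower bound.} Any feasible $\tilde w$ makes $B=C(\tilde w)$ attainable. By Lemma~\ref{lem-monotone}, the attainable set is $[B^*,+\infty)$, so $C(\tilde w)\ge B^*$.
\item[] \emph{Upper bound.} Lemma~\ref{lem-monotone} says the minimum in (\ref{eq-Bstar}) is attained, so $B^*$ is itself attainable. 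By Theorem~\ref{thm-Bfea}, $g(B^*)\ge0$ for all $g\in\mathcal G$, hence there is a feasible $\tilde w$ with $C(\tilde w)\le B^*$.
\end{itemize}
Together these give optimal value $=B^*$.

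Second, $B^*\ge C(\hat w)$. By Lemma~\ref{lem-lowbd-hard}, every feasible solution costs at least $C(\hat w)$. Note that here $\hat w$ is the MCSPIT$_\infty$ solution on $\ddot T\setminus\chi_{t_0}$, extended by $w$ on the chain $\chi_{t_0}$. The lemma was stated for the global $\hat w$, but its proof (via Lemma~\ref{lem-lift}) applies verbatim on $\ddot T$: lifting a feasible $\tilde w$ gives a feasible solution of MCSPIT$_\infty(\ddot T)$. Its restriction to $\ddot T\setminus\chi_{t_0}$ is then feasible for MCSPIT$_\infty(\ddot T\setminus\chi_{t_0})$, because those root--leaf paths avoid $\chi_{t_0}$ except the leaf $t_0$ itself; $t_0$ is not a constraint there since its chain is removed. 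Therefore the cost is at least $C(\hat w)$, and applying this to an optimal solution of cost $B^*$ yields $B^*\ge C(\hat w)$. As a cross-check, Lemma~\ref{lem-optval} gives optimal value $C(\hat w)+M^*$ with $M^*\ge0$. Consistency with $B^*=C(\hat w)+M^*$ follows from Lemma~\ref{lem-Vprop}(3).

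The only delicate step is the justification of the lower bound $C(\hat w)$ for the subtree version with the chain removed; that is, making sure Lemma~\ref{lem-lowbd-hard} transfers to the current $\hat w$. If the restriction argument needs care, I will instead invoke Lemma~\ref{lem-optval} directly. It already states that the optimal value equals $C(\hat w)+M^*\ge C(\hat w)$, and together with the first part this closes the proof.
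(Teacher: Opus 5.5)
Your proposal is correct and follows the paper's proof. The lower bound holds because $C(\tilde w)$ is an attainable budget for any feasible $\tilde w$. The upper bound holds because $B^*$ is itself attainable by Lemma~\ref{lem-monotone}. Finally, $B^*\ge C(\hat w)$ follows from Lemma~\ref{lem-lowbd-hard} applied to a feasible solution of cost at most $B^*$. Your extra check that Lemma~\ref{lem-lowbd-hard} transfers to the subtree vector $\hat w$ is sound: you lift, then restrict to $\ddot T\setminus\chi_{t_0}$, whose side root--leaf paths avoid $\chi_{t_0}$. This makes explicit a step the paper uses without comment.
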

\begin{proof}
Consider any feasible solution $\tilde w$ of
\textbf{RIOVSPT$_{\infty}(\ddot T)$}. Its cost $C(\tilde w)$ is an
attainable budget, witnessed by $\tilde w$ itself. Hence
$C(\tilde w)\ge B^{*}$. Conversely, $B^{*}$ is attainable by
Lemma~\ref{lem-monotone}. So some feasible solution $\tilde w'$ costs
at most $B^{*}$. Therefore the optimal value equals $B^{*}$. Finally,
Lemma~\ref{lem-lowbd-hard} shows $C(\hat w)\le C(\tilde w')\le B^{*}$.\hb
\end{proof}

Theorem~\ref{thm-optval} locates the optimum through the attainable
budgets. Lemma~\ref{lem-optval} locates it through the value function
$V$. The two thresholds $B^{*}$ and $M^{*}$ must therefore agree up to
the shift $C(\hat w)$. The following corollary makes this precise by
rewriting the predicate $V(M)\le D$ in the language of margins.

\begin{corollary}\label{cor-Vlink}
For every $M\ge 0$, $V(M)\le D$ holds if and only if
$g\bigl(C(\hat w)+M\bigr)\ge 0$ for all $g\in\mathcal{G}$. Consequently,
$B^{*}=C(\hat w)+M^{*}$.
\end{corollary}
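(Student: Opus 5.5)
The plan is to chain two equivalences already established. Fix $M\ge 0$ and put $B:=C(\hat w)+M$. By Lemma~\ref{lem-Vprop}(3), $V(M)\le D$ holds if and only if \textbf{RIOVSPT$\bm{_{\infty}(\ddot T)}$} admits a feasible solution $\tilde w$ with $C(\tilde w)\le B$.

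We then identify this with attainability of $B$ in the sense of Section~\ref{sss-margins}. By definition, $B$ is attainable exactly when some $\tilde w$ satisfies three conditions: the bounds $l\le\tilde w\le u$, the per-edge budget $c(e)|\tilde w(e)-w(e)|\le B$ on every $e\in\ddot T$, and the path constraints of \textbf{RIOVSPT$\bm{_{\infty}(\ddot T)}$}. Requiring the per-edge budget on every edge is the same as requiring $C(\tilde w)=\max_{e}c(e)|\tilde w(e)-w(e)|\le B$, so the two notions coincide. Applying Theorem~\ref{thm-Bfea}, attainability of $B$ is equivalent to $g(B)\ge 0$ for all $g\in\mathcal G$. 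This proves the first assertion.

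For the consequence, observe that the set $\{M\ge 0\mid V(M)\le D\}$ equals $\{M\ge 0\mid g(C(\hat w)+M)\ge 0\ \forall g\in\mathcal G\}$. Its image under the shift $M\mapsto C(\hat w)+M$ is the set of attainable budgets $B\ge C(\hat w)$. By Lemma~\ref{lem-monotone}, the attainable budgets form the ray $[B^*,+\infty)$, and by Theorem~\ref{thm-optval}, $B^*\ge C(\hat w)$. Hence the shifted set is exactly $[B^*,+\infty)$, its minimum is $B^*$, and so $C(\hat w)+M^*=B^*$. The minimum defining $M^*$ is attained by Lemma~\ref{lem-optval}. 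As a cross-check, the same identity also follows from comparing the two optimal-value formulas in Lemma~\ref{lem-optval} and Theorem~\ref{thm-optval}.

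The only delicate point is making sure that the restriction $B\ge C(\hat w)$ does not cut off part of the ray. This is handled by $B^*\ge C(\hat w)$. Everything else is bookkeeping of definitions.
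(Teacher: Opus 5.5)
Your proposal is correct and follows essentially the same route as the paper. You chain Lemma~\ref{lem-Vprop}(3) with Theorem~\ref{thm-Bfea}, noting that a feasible solution of cost at most $C(\hat w)+M$ is exactly a witness that $B=C(\hat w)+M$ is attainable, and then transfer the minimum through the shift, using $B^{*}\ge C(\hat w)$ from Theorem~\ref{thm-optval} so that the restriction $B\ge C(\hat w)$ loses no attainable budget. Your cross-check is also valid, since Lemma~\ref{lem-optval} and Theorem~\ref{thm-optval} give the same optimal value without using this corollary, so equating them yields $B^{*}=C(\hat w)+M^{*}$ directly.
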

\begin{proof}
By Lemma~\ref{lem-Vprop}(3), $V(M)\le D$ holds if and only if
$\mathrm{RIOVSPT}_{\infty}(\ddot T)$ admits a feasible solution of cost
at most $C(\hat w)+M$. By Theorem~\ref{thm-Bfea}, this is equivalent to
the nonnegativity of all margins at $B=C(\hat w)+M$. Hence
\[
\{M\ge 0:\ V(M)\le D\}\;=\;\{B-C(\hat w):\ B\text{ attainable}\}.
\]
The right-hand set consists of nonnegative numbers: every attainable
budget is at least $B^{*}$ by the definition of $B^{*}$, and
$B^{*}\ge C(\hat w)$ by Theorem~\ref{thm-optval}. Both minima exist, and
the affine change of variables $B=C(\hat w)+M$ gives
$M^{*}=B^{*}-C(\hat w)$.\hb
\end{proof}

\subsubsection{Locating the optimal budget and constructing the solution}\label{sss-final}
By Corollary~\ref{cor-Vlink}, computing $M^{*}$ amounts to computing
$B^{*}$, which Theorem~\ref{thm-optval} identifies with the optimal
value itself. It remains to determine $B^{*}$ exactly. The attainability
test of Theorem~\ref{thm-Bfea} is monotone in $B$ by
Lemma~\ref{lem-monotone}. Hence a binary search over the sorted values
of $\mathcal{B}$ locates two consecutive breakpoints $a<b$, with $a$
unattainable and $b$ attainable. Consequently, $B^{*}\in(a,b]$. Between
two consecutive breakpoints every margin is affine. The following lemma
gives $B^{*}$ in closed form.

\begin{lem}\label{lem-affine}
Let $a<b$ be two consecutive values of $\mathcal B$ such that $a$ is
unattainable and $b$ is attainable. Then every margin $g\in\mathcal G$ is
affine on $[a,b]$, and
\begin{equation}\label{eq-closedform}
B^{*}=\max_{\substack{g\in\mathcal G\\ g(a)<0}}
\left(a+(b-a)\,\frac{-g(a)}{g(b)-g(a)}\right),
\end{equation}
where every denominator satisfies $g(b)-g(a)>0$.
\end{lem}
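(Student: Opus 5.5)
We will prove the lemma in three steps: affinity of every margin on $[a,b]$, computation of the root of each margin that is negative at $a$, and identification of $B^{*}$ with the largest such root.

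\textbf{Step 1: affinity.} By Lemma~\ref{lem-monotone}, every margin $g\in\mathcal G$ is continuous and piecewise affine, and all its breakpoints lie in $\mathcal B$. Since $a<b$ are consecutive elements of $\mathcal B$, no breakpoint lies in the open interval $(a,b)$. Continuity at the endpoints then makes $g$ a single affine function on the closed interval $[a,b]$. Concretely, for each edge $e$ the truncated bounds $l_B(e)$ and $u_B(e)$ keep a fixed active branch of the $\max$ or $\min$ in (\ref{eq-trunc}) throughout $[a,b]$. This is because their kinks $c(e)(w(e)-l(e))$ and $c(e)(u(e)-w(e))$ belong to $\mathcal B$.

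\textbf{Step 2: the roots.} Let $\mathcal G^-:=\{g\in\mathcal G: g(a)<0\}$. This set is nonempty, since $a$ is unattainable and Theorem~\ref{thm-Bfea} applies. Since $b$ is attainable, every $g$ satisfies $g(b)\ge 0$. Hence for $g\in\mathcal G^-$ we get $g(b)-g(a)\ge -g(a)>0$, which is the claimed positivity of the denominators. By affinity, $g(B)=g(a)+\frac{B-a}{b-a}(g(b)-g(a))$ on $[a,b]$. So $g(B)\ge 0$ holds exactly when $B\ge \beta_g:=a+(b-a)\frac{-g(a)}{g(b)-g(a)}$, and $\beta_g\in(a,b]$.

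\textbf{Step 3: identifying $B^{*}$.} For margins with $g(a)\ge 0$, monotonicity (Lemma~\ref{lem-monotone}) gives $g(B)\ge 0$ for all $B\ge a$. Hence for $B\in[a,b]$, all margins are nonnegative if and only if $B\ge\max_{g\in\mathcal G^-}\beta_g$. By Lemma~\ref{lem-monotone}, the attainable set is the ray $[B^{*},+\infty)$. Since $a$ is unattainable and $b$ is attainable, $B^{*}\in(a,b]$. Therefore $B^{*}$ is the smallest $B$ in $[a,b]$ passing the test, which is exactly the maximum in (\ref{eq-closedform}).

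The only delicate point is Step 1. We must justify that "consecutive in $\mathcal B$" really excludes every kink of every margin inside $(a,b)$. This reduces to checking that each term of each margin is a signed sum of $l_B(e)$ and $u_B(e)$ with $e\in\ddot T$, whose kinks all lie in $\mathcal B$ by Remark~\ref{rem:kinks}. The rest is routine.
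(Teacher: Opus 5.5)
Your proposal is correct and follows essentially the same route as the paper. Both arguments get affinity on $[a,b]$ from the fact that all kinks of the margins lie in $\mathcal B$, take the unique root $a+(b-a)\frac{-g(a)}{g(b)-g(a)}\in(a,b]$ of each margin with $g(a)<0\le g(b)$, and identify $B^{*}$ as the largest such root using monotonicity and the attainability of $b$. The only cosmetic difference is that you place $B^{*}$ in $(a,b]$ by quoting the ray structure of Lemma~\ref{lem-monotone}, whereas the paper argues directly that no budget below $a$ is attainable.
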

\begin{proof}
The breakpoints of every margin lie in $\mathcal{B}$. Since $a$ and $b$
are consecutive elements of $\mathcal{B}$, no margin has a kink in
$(a,b)$. Hence every $g\in\mathcal{G}$ is affine on $[a,b]$.

Fix $g\in\mathcal{G}$. If $g(a)\ge0$, then $g(B)\ge0$ throughout
$[a,b]$ by monotonicity (Lemma~\ref{lem-monotone}). Such a margin
imposes no constraint. If $g(a)<0$, then $g(b)\ge0$, because $b$ is
attainable (Theorem~\ref{thm-Bfea}). Thus $g$ increases strictly, and
its unique zero in $(a,b]$ is
\[
B_g:=a+(b-a)\,\frac{-g(a)}{g(b)-g(a)}.
\]
Moreover, $g(B)\ge0$ on $[a,b]$ if and only if $B\ge B_g$.

Finally, $a$ is unattainable, so the family
$\{g\in\mathcal{G}:g(a)<0\}$ is nonempty (Theorem~\ref{thm-Bfea}). Take
any $B\in[a,b]$. All margins are nonnegative at $B$ if and only if
$B\ge B_g$ for every $g$ in this family. Hence the smallest attainable
budget in $[a,b]$ equals the maximum in (\ref{eq-closedform}). No
budget below $a$ is attainable by monotonicity. And $b$ itself is
attainable. Therefore this maximum is exactly $B^{*}$.\hb
\end{proof}

The breakpoint search of Lemma~\ref{lem-affine} gives the optimal budget
$B^{*}$. The canonical solver by Algorithm \ref{alg-SPIT0} then returns the canonical solution
$\tilde w^{M^{*}}$ at this budget. The next theorem shows that this
solution is optimal for \textbf{RIOVSPT$_{\infty}(\ddot T)$}.
 \begin{thm}\label{thm-final}
Suppose $V(0)>D$. Let $B^{*}$ be the value of (\ref{eq-Bstar}) and
$M^{*}:=B^{*}-C(\hat w)$. Then the canonical solution
$\tilde w^{M^{*}}$ returned by Algorithm~\ref{alg-SPIT0} is optimal for
\textbf{RIOVSPT$_{\infty}(\ddot T)$}, and
$C(\tilde w^{M^{*}})=C(\hat w)+M^{*}=B^{*}$.
\end{thm}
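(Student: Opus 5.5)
The plan is to chain together results already established: Corollary~\ref{cor-Vlink}, Lemma~\ref{lem-trivial}, and Theorem~\ref{thm-SPITopt}. First, I translate the budget threshold into the value-function threshold. By Corollary~\ref{cor-Vlink}, $B^{*}=C(\hat w)+M^{*}$, where $M^{*}$ is defined in (\ref{eq-Mstar}). Since $B^{*}\ge C(\hat w)$ by Theorem~\ref{thm-optval}, we get $M^{*}=B^{*}-C(\hat w)\ge 0$, so $\mathbf{SPIT}_{\infty}^{0}(M^{*})$ is well defined and Algorithm~\ref{alg-SPIT0} can be run at this budget.

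Second, I show that $\tilde w^{M^{*}}$ is feasible for \textbf{RIOVSPT$_{\infty}(\ddot T)$}. The standing assumption $V(0)>D$ puts us in the first case of Lemma~\ref{lem-trivial}. There, continuity of $V$ (Lemma~\ref{lem-Vprop}(2)) together with the minimality of $M^{*}$ gives $V(M^{*})=D$. By Theorem~\ref{thm-SPITopt}, $\tilde w^{M^{*}}$ is feasible for $\mathbf{SPIT}_{\infty}^{0}(M^{*})$ and satisfies $\tilde w^{M^{*}}(P_0)=V(M^{*})=D$. Feasibility for $\mathbf{SPIT}_{\infty}^{0}(M^{*})$ already supplies the bounds $l\le\tilde w^{M^{*}}\le u$ and the side-path constraints $\tilde w^{M^{*}}(P_k)\ge D$ for $t_k\in\ddot Y\setminus\{t_0\}$. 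Adding the equality on $P_0$, we obtain exactly the constraint system of \textbf{RIOVSPT$_{\infty}(\ddot T)$}.

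Third, I pin down the cost. The budget constraint of $\mathbf{SPIT}_{\infty}^{0}(M^{*})$ gives $C(\tilde w^{M^{*}})\le C(\hat w)+M^{*}=B^{*}$. For the reverse inequality, $\tilde w^{M^{*}}$ is feasible for \textbf{RIOVSPT$_{\infty}(\ddot T)$}, so by Theorem~\ref{thm-optval} its cost is at least the optimal value $B^{*}$ (equivalently, this follows from Lemma~\ref{lem-optval}). Hence $C(\tilde w^{M^{*}})=B^{*}=C(\hat w)+M^{*}$, and the solution is optimal.

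I expect the only delicate step to be the claim $V(M^{*})=D$ rather than merely $V(M^{*})\le D$. If $V(M^{*})<D$ held, the canonical solution would violate the equality constraint on $P_0$, and we would have to fall back on the convex combination of Lemma~\ref{lem-Vprop}(3). I handle this by invoking the argument of Lemma~\ref{lem-trivial}. Note that $M^{*}>0$ here, since $V(0)>D$ rules out $M^{*}=0$. Then, if $V(M^{*})<D$, continuity would give some $\varepsilon\in(0,M^{*})$ with $V(M^{*}-\varepsilon)<D$, contradicting the minimality of $M^{*}$.
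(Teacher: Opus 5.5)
Your proposal is correct and follows the paper's proof essentially step for step. Lemma~\ref{lem-trivial} gives $V(M^{*})=D$. Theorem~\ref{thm-SPITopt} then gives feasibility together with $C(\tilde w^{M^{*}})\le C(\hat w)+M^{*}=B^{*}$ (via Corollary~\ref{cor-Vlink}), and Theorem~\ref{thm-optval} supplies the matching lower bound. Your explicit checks that $M^{*}\ge 0$ and that the two definitions of $M^{*}$ coincide simply make visible what the paper uses implicitly.
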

\begin{proof}
Since $V(0)>D$, Lemma~\ref{lem-trivial} yields $V(M^{*})=D$, that is,
$\tilde w^{M^{*}}(P_{0})=D$. By Theorem~\ref{thm-SPITopt},
$\tilde w^{M^{*}}$ is feasible for
$\mathbf{SPIT}_{\infty}^{0}(M^{*})$, so its cost is at most
$C(\hat w)+M^{*}$ and its   paths on $\ddot T\backslash P_0$ meet the bound $D$. Both
properties carry over to \textbf{RIOVSPT$_{\infty}(\ddot T)$}. Hence $\tilde w^{M^{*}}$ is
feasible for \textbf{RIOVSPT$_{\infty}(\ddot T)$}.

Its cost satisfies $C(\tilde w^{M^{*}})\le C(\hat w)+M^{*}=B^{*}$ by Corollary~\ref{cor-Vlink}. On the other hand, every
feasible solution of \textbf{RIOVSPT$_{\infty}(\ddot T)$} costs at
least $B^{*}$ by Theorem~\ref{thm-optval}. Therefore
$C(\tilde w^{M^{*}})=B^{*}$  and $\tilde w^{M^{*}}$ is optimal.\hb
\end{proof}

The whole procedure for the hard case is summarized in the \textbf{ELSE} branch of Algorithm \ref{alg-IOVSPTinfty}, which also collects the other two cases for completeness.

\begin{breakablealgorithm}
	\renewcommand{\thealgorithm}{2}
	\caption{$[\tilde  w,C(\tilde   w)]:=\textbf{RIOVSPT}_\infty(T, Y, w, l, u, c, D,t_0)$}
	\label{alg-IOVSPTinfty}
	\begin{algorithmic}[1]

		\REQUIRE  The tree $T(V,E)$ rooted at $v_1$, the set $Y$ of leaf nodes; three weight vectors $w,l,u$ and the cost vector $c$; the given leaf node $t_0$ and $D$.
		\ENSURE  An optimal solution $\tilde w$  and the optimal value $C(\tilde w)$.
        \STATE Compute $P_0$, its first edge $\ddot e$, the gate subtree $\ddot T$, $\ddot Y$, and the   chain $\chi_{t_0}$. 
		\IF {$D<l(P_0)$ or $ D>\min_{t_k\in Y}u(P_k)$ or $l(P_0)>\min_{t_k\in Y}\big(l(P_k\cap P_0)+u(P_k\backslash P_0)\big)$}
		\RETURN ``The problem is infeasible." 
		\ENDIF
		\STATE Call $[\hat w^0, C(\hat w^0)]:=MCSPIT_{\infty}(T\backslash \ddot T, Y\backslash\ddot Y, w, u, c, D)$. %
		\STATE Call $[\hat w,C(\hat w)]:=MCSPIT_\infty(\ddot T\backslash\chi_{t_0},\ddot Y\backslash\{t_0\},w,u,c,D)$.
\STATE Set $\hat w(e):=w(e)$ on $\chi_{t_0}$. Let  $D^0:=D-\hat w(P_0)$.
         \STATE Compute $\Delta\hat l(e)$$:=\max\{l(e)-w(e),-C(\hat w)/c(e)\}$ and $\Delta\hat u(e):=\min\{u(e)-w(e),C(\hat w)/c(e)\}$ on $\chi_{t_0}$.

		\IF {$\Delta \hat l(\chi_{t_0})\leq D^0\leq \Delta \hat u(\chi_{t_0})$} %
			\IF {$D^0\leq 0$}
				\STATE Determine $\tilde w$ on $\ddot T$ by (\ref{tilde w-case-1}).
			\ELSE
				\STATE Determine $\tilde w$ on $\ddot T$ by (\ref{tilde w-case-2}).
			\ENDIF
			\STATE Let $C(\tilde w):=C(\hat w)$.

		\ELSIF {$D^0>\Delta \hat u(\chi_{t_0})$} %
			\STATE Sort the distinct values of
$\{c(e)\Delta u(e)\mid e\in P_{0},  c(e)\Delta u(e)>C(\hat w)\}$
in ascending order as %
$
\lambda_{1}:=c(e_{\alpha_{1}})\Delta u(e_{\alpha_{1}})<\cdots
<\lambda_{m}:=c(e_{\alpha_{m}})\Delta u(e_{\alpha_{m}}).
$
\\
Set $\lambda_{0}:=C(\hat w)$.
\STATE Let $a:= 0$ and $b:= m$.
\WHILE{$b-a>1$}
    \STATE $k:=\lfloor\frac{a+b}{2}\rfloor$ and compute $L(\lambda_{k})$ by (\ref{eq-Laffine}).
    \IF{$L(\lambda_{k})<D$}
        \STATE $a:= k$.
    \ELSE
        \STATE $b:=k$.
    \ENDIF
\ENDWHILE
\STATE  set $E_{<}^{*}:=\{e\in P_{0}\mid c(e)\varDelta u(e)\le\lambda_{a}\}$ and $E_{\ge}^{*}:=\{e\in P_{0}\mid c(e)\varDelta u(e)>\lambda_{a}\}$.
\STATE Compute $\lambda^{*}$ by (\ref{eq-lamstar-closed}) and determine $\tilde{w}$ on $\ddot{T}$ by (\ref{eq-wstar-up}).
\STATE Let $C(\tilde{w}):=\lambda^{*}$.
			
		\ELSE %
			\STATE Mark the nodes of $P_0$ and compute the merge node  $v^*(t_k)$  for every leaf $t_k\in\ddot Y\backslash\{t_0\}$ by one depth-first traversal. 
			\STATE Pre-adjust $\hat w$ on $\chi_{t_0}$ by (\ref{eq-free-down}).
			\STATE Call $[\tilde w^0,\tilde w^0(P_0)]:=\textbf{SPIT}_\infty^0(M)(\ddot T, \ddot Y, w,\hat w, l, u, c, D,0,t_0)$.

			\IF {$\tilde w^0(P_0)\leq D$}
				\STATE Determine $\tilde w$ on $\ddot T$ by the convex combination $\tilde w_\alpha$  as (\ref{eq-walpha})  and let $C(\tilde w):=C(\hat w)$.
			\ELSE

\STATE Sort and deduplicate the breakpoint set $\mathcal{B}$ of (\ref{eq-Bset}) as $\beta_{0}:=0<\beta_{1}<\cdots<\beta_{q}$. 

\STATE $a:=0$ and $b:=q$. 
\WHILE{$b-a>1$}
    \STATE $k:=\lfloor\frac{a+b}{2}\rfloor$. Evaluate all margins of (\ref{eq-margins}) at $B=\beta_{k}$ by one depth-first traversal.
    \IF{$g(\beta_k)\ge 0$ for all $g\in \mathcal{G}$} 
        \STATE $b:=k$.
    \ELSE
        \STATE $a:=k$.
    \ENDIF
\ENDWHILE
\STATE Set the budget values $a:=\beta_{a}$ and $b:=\beta_{b}$. Then $a$ is unattainable, $b$ is attainable, and no edge breakpoint lies in $(a,b)$.
                \STATE Evaluate the margins at these budget values $a,b$, compute $B^*$ by (\ref{eq-closedform}), and let $M^*:=B^*-C(\hat w)$.
 
				\STATE Call $[\tilde w,\tilde w(P_0)]:=\textbf{SPIT}_\infty^0(M)(\ddot T, \ddot Y, w,\hat w, l, u, c, D,M^*,t_0)$.\\
Let $C(\tilde w):=C(\hat w)+M^*$. 
			\ENDIF
		\ENDIF

		\STATE Extend $\tilde w$ to the whole tree by $\tilde w(e):=\hat w^0(e)$ for $e\in T\backslash \ddot T$. 
		\RETURN $\Big(\tilde  w,\ \max\big\{C(\hat w^0), C(\tilde w)\big\}\Big)$

	\end{algorithmic}
\end{breakablealgorithm}

It remains to analyze the running time of the overall procedure.

\begin{thm}\label{thm-hardtime}
In the hard case $\Delta\hat l(\chi_{t_0})>D^0$, the subproblem \emph{\textbf{RIOVSPT$\bm{_\infty}$}$\bm{(\ddot T)}$} is solved in $O(n\log n)$ time.
\end{thm}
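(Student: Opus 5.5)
The bound follows by walking through the \textbf{ELSE} branch of Algorithm~\ref{alg-IOVSPTinfty} and charging each line; correctness is already settled by Theorem~\ref{thm-final} and Lemma~\ref{lem-trivial}. The preliminary work costs as follows:
\begin{itemize}
\item Computing $\hat w$ on $\ddot T\setminus\chi_{t_0}$ by Algorithm~1 of \cite{ZhangL2025} takes $O(n\log n)$.
\item The merge nodes $v^*(t_k)$ and depths $d_0$ come from one depth-first traversal, in $O(n)$.
\item The pre-adjustment (\ref{eq-free-down}) is $O(n)$.
\end{itemize}
The substantive items are one call of Algorithm~\ref{alg-SPIT0} at $M=0$, the breakpoint binary search, the closed-form evaluation (\ref{eq-closedform}), and a final call of Algorithm~\ref{alg-SPIT0} at $M^*$.

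\textbf{Step 1: Algorithm~\ref{alg-SPIT0} runs in $O(n\log n)$.} Building $\tilde w^1$ and the floors $\underline w$ is $O(n)$. Precompute prefix sums of $\hat w-\underline w$ along $P_0$ so that every $R(P_{v,v'})$ is $O(1)$. The subpaths $P_{v^{\tau-1},v^\tau}$ are edge-disjoint, so the crossing edges $e_b$ are found by a linear scan with total cost $O(n)$.

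The main obstacle is maintaining $D^\tau=\min_{t\in Y^\tau}(\tilde w^\tau(P_t)-D)$ and the active set. Each iteration shortens every active leaf's path by the same amount, since all active leaves contain the processed subpath. I would therefore keep a single global offset $\sigma$ and store the keys $\tilde w^1(P_t)$ in a min-heap. The value $\tilde w^1(P_t)$ is computed for all leaves in $O(n)$ by one depth-first traversal.

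Deactivation is handled by sorting leaves by $d_0(v^*(t))$ once, in $O(n\log n)$, and deleting them in that order as $v^\tau$ advances. Alternatively, deactivated heap minima can be discarded lazily. Each leaf is inserted and removed once, so the loop costs $O(n\log n)$.

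\textbf{Step 2: evaluating all margins at a given $B$ costs $O(n)$.} The breakpoint set $\mathcal B$ has at most $2n+1$ values and is sorted once in $O(n\log n)$. For a fixed $B$:
\begin{itemize}
\item Compute $l_B(e)$ and $u_B(e)$ for all edges in $O(n)$.
\item Compute $u_B(P_k)$ for every leaf by one depth-first traversal accumulating root-to-node sums.
\item Obtain $g^{(2)}_k(B)$ from the decompositions $u_B(P_k\setminus P_0)=u_B(P_k)-u_B(P_{v_1,v^*(t_k)})$ and $l_B(P_0\setminus P_k)=l_B(P_0)-l_B(P_{v_1,v^*(t_k)})$. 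Both use prefix sums along $P_0$, so each is $O(1)$ per leaf.
\end{itemize}
All $|\mathcal G|=O(n)$ margins are thus evaluated in $O(n)$. The binary search takes $O(\log n)$ rounds, for $O(n\log n)$ in total.

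\textbf{Step 3: the remaining work is $O(n\log n)$.} Formula (\ref{eq-closedform}) needs the margins at $a$ and $b$ only, so it costs $O(n)$. The convex-combination branch (\ref{eq-walpha}) is $O(n)$. Adding the two calls of Algorithm~\ref{alg-SPIT0} and the sort gives $O(n\log n)$ overall.
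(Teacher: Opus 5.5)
Your proposal is correct and follows essentially the same route as the paper: your global offset $\sigma$ is the paper's $\xi^{\tau}$, the min-heap keyed by $\tilde w^{1}(P_t)$ with lazy discarding of leaves whose merge node is no deeper than the processed endpoint is the paper's device, and the rest matches the paper's accounting (the $O(n)$ margin test via root-to-node prefix sums and merge nodes, the $O(\log n)$-round breakpoint search, and two calls of Algorithm~\ref{alg-SPIT0}). One small precision: state the uniform-shortening invariant for the leaves of $Y^{\tau}$ at the start of an iteration, all of which contain $P_{v_1,v^{\tau-1}}$; a leaf whose merge node lies strictly inside the current subpath $P_{v^{\tau-1},v^{\tau}}$ is shortened by less, but it is deactivated right after that iteration, so the offset argument is unaffected.
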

\begin{proof}
(1) One call of Algorithm~\ref{alg-SPIT0}. Constructing
$\tilde w^{1}$ by (\ref{tildew1}) takes $O(n)$ time. All root--leaf
lengths $\tilde w^{1}(P_t)$ and the merge node of every leaf can be
precomputed by one depth-first traversal, again in $O(n)$ time.

The key observation is uniform shortening. At the beginning of
iteration $\tau$, every leaf of $Y^{\tau}$ contains the whole processed
subpath $P_{v_1,v^{\tau-1}}$ of $P_0$. Only edges of this subpath have
been downgraded. Hence the current length of every $t\in Y^{\tau}$
equals $\tilde w^{1}(P_t)-\xi^{\tau}$, where $\xi^{\tau}$ is the total
amount downgraded so far, the same for all active leaves. Therefore, we have 
$
t^{\tau}=\arg\min_{t\in Y^{\tau}}\tilde w^{1}(P_t), 
D^{\tau}=\min_{t\in Y^{\tau}}\tilde w^{1}(P_t)-\xi^{\tau}-D.
$
Thus $t^{\tau}$ and $D^{\tau}$ come from the heap minimum and  no leaf is
rescanned.

The active set is never rebuilt explicitly. Instead, all leaves of $\ddot Y\setminus\{t_{0}\}$ stay in a priority
queue implemented as a min-heap: its extract-min returns the leaf with
the smallest initial length.  The key of
leaf $t$ is its initial length $\tilde w^{1}(P_t)$, and ties are
broken by a fixed rule.
Keep the depth of the processed endpoint. Before each extraction,
discard the heap root while its merge node is no deeper than that
endpoint; such a leaf is inactive. Every leaf is inserted once and
physically removed at most once. All heap operations cost $O(n\log n)$
in total. One iteration may discard several leaves, so no
$O(\log n)$ worst-case cost per iteration is claimed.

The remaining costs are linear. Precomputing prefix sums on $P_0$ gives each
$R\bigl(P_{v^{\tau-1},v^{\tau}}\bigr)$ in $O(1)$ time: it is the
difference of two prefix values. The pointer locating the partial edge moves only
downward. Each iteration removes at least the selected leaf, so there
are at most $|\ddot Y|-1$ iterations. The notation $\tilde w^{\tau}$ is analytical: the algorithm updates
one shared array in place. Since the processed prefix moves only
toward $t_0$, the subpaths of different iterations are disjoint, and
each edge is modified at most once. A single iteration may touch
$O(n)$ edges, but the total edge work is $O(|P_0|)$. A final depth-first traversal recomputes the root--leaf
lengths and classifies every side leaf in $O(n)$ time. Consequently,
one call takes $O(n\log n)$ time.

(2) The search for $M^{*}$. First, mark the nodes of $P_{0}$
and run one depth-first traversal of $\ddot T$. It records, for every
node $v$, its merge node $v^{*}(v)$, the deepest node of $P_{0}$ on
$P_{v_1,v}$, and the depth $d_0(v)$. This preprocessing takes $O(n)$
time and is done once. For a candidate budget $B$, one further
depth-first traversal accumulates the prefix sums
$\ell_B(v):=l_B(P_{v_1,v})$ and $\mu_B(v):=u_B(P_{v_1,v})$. Then the
truncated bounds (\ref{eq-trunc}) of all edges and all the margins
(\ref{eq-margins}) are obtained in $O(n)$ time: each margin is a
difference of two such values, since
$l_B(P_0)=\ell_B(t_0)$,
$u_B(P_k\setminus P_0)=\mu_B(t_k)-\mu_B(v^{*}(t_k))$, and
$l_B(P_0\setminus P_k)=\ell_B(t_0)-\ell_B(v^{*}(t_k))$. Hence one
attainability test of Theorem~\ref{thm-Bfea} costs $O(n)$ time.

The breakpoint set $\mathcal{B}$ of (\ref{eq-Bset}) has at most
$2|E(\ddot T)|+1=O(n)$ elements. It is constructed and sorted in
$O(n\log n)$ time. The binary search performs $O(\log n)$
attainability tests, costing $O(n\log n)$ time in total. The closed
form (\ref{eq-closedform}) is evaluated from the margins at two
consecutive breakpoints in $O(n)$ time. Algorithm~\ref{alg-SPIT0} is
called twice, at $M=0$ and at $M=M^{*}$, costing $O(n\log n)$ time by
part (1). Altogether, the hard case costs $O(n\log n)$ time.\hb
\end{proof}

We conclude this subsection by summarizing the three cases.
\begin{thm}\label{thm-RIOVSPTinfty}
The problem \emph{\textbf{(RIOVSPT$\bm{_\infty}$)}} on a tree with $n\geq2$ nodes can be solved in $O(n\log n)$ arithmetic operations.
\end{thm}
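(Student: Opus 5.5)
The plan is to walk through Algorithm~\ref{alg-IOVSPTinfty} line by line. For each line we bound its cost, and correctness of every branch is taken from the earlier theorems. The overall bound is then the sum of a constant number of $O(n\log n)$ and $O(n)$ terms.

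\textbf{Preprocessing and feasibility.} Computing $P_0$, the gate edge $\ddot e$, the subtree $\ddot T$, the leaf set $\ddot Y$ and the chain $\chi_{t_0}$ takes one depth-first traversal, so $O(n)$ time.

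The feasibility test of Corollary~\ref{cor-fea=RIOVSPT1} is also linear. One traversal accumulates prefix sums of $l$ and $u$ from the root. Marking $P_0$ gives every leaf $t_k$ its deepest common node with $P_0$, so both $l(P_k\cap P_0)$ and $u(P_k\setminus P_0)$ are differences of two prefix values. Hence $\min_{t_k\in Y}u(P_k)$ and the minimum in condition (2) of Theorem~\ref{thm-fea=RIOVSPT_infty} cost $O(n)$. By that theorem, the infeasible output is correct.

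\textbf{The two MCSPIT calls and the decomposition.} The algorithm calls \textbf{MCSPIT$_\infty$} twice, on $T\setminus\ddot T$ and on $\ddot T\setminus\chi_{t_0}$. These are edge-disjoint subtrees of total size $O(n)$, and each call runs in $O(n\log n)$ by the algorithm of \cite{ZhangL2025}. Theorem~\ref{thm-construct} justifies solving $\ddot T$ separately and returning the maximum of the two costs.

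The degenerate case $\hat w(P_0)=D$ needs no separate treatment: it gives $D^0=0$, so it lies in the first case by Theorem~\ref{thm-case-mid} (consistently with Theorem~\ref{thm-P0D}). Computing $D^0$, $\Delta\hat l$ and $\Delta\hat u$ on $\chi_{t_0}$ is $O(n)$.

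\textbf{The three cases.} The three cases are exhaustive, since they compare $D^0$ with the interval $[\Delta\hat l(\chi_{t_0}),\Delta\hat u(\chi_{t_0})]$.

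\begin{itemize}
\item Case 1 (Theorem~\ref{thm-case-mid}). Finding the edge $e_b$ is one linear scan of prefix sums along $\chi_{t_0}$, so this case is $O(n)$.
\item Case 2 (Theorem~\ref{thm-opt-up}). Sorting the values $\lambda_j$ costs $O(n\log n)$. The binary search makes $O(\log n)$ evaluations of $L$ via (\ref{eq-Laffine}), each $O(|P_0|)$. Then (\ref{eq-lamstar-closed}) and (\ref{eq-wstar-up}) are linear. The total is $O(n\log n)$.
\item Case 3 (hard case). This is exactly Theorem~\ref{thm-hardtime}, with correctness from Lemma~\ref{lem-trivial} and Theorem~\ref{thm-final}.
\end{itemize}

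\textbf{Assembly.} The final extension to $T\setminus\ddot T$ is $O(n)$. The main care point is to confirm that no step silently exceeds $O(n\log n)$. The place to check is the heap-based loop of Algorithm~\ref{alg-SPIT0}, which Theorem~\ref{thm-hardtime} already bounds by amortization. Every other step is either a sort of $O(n)$ numbers or $O(\log n)$ repetitions of an $O(n)$ traversal.

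Since $n\ge2$, the tree has at least one edge and $P_0$ and $\ddot e$ are well defined. Summing the bounds above gives $O(n\log n)$ arithmetic operations.
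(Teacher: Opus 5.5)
Your proposal is correct and follows the paper's proof essentially step for step. Both use the decomposition of Theorem~\ref{thm-construct}, an $O(n)$ prefix-sum feasibility test, two \textbf{MCSPIT}$_\infty$ calls at $O(n\log n)$, the two closed-form cases, and Theorem~\ref{thm-hardtime} for the hard case. The only difference is cosmetic: you charge $O(|P_0|)$ per evaluation of $L$ in the upgrading case where the paper claims $O(1)$, and this still gives $O(n\log n)$ overall.
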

\begin{proof}
By Theorem~\ref{thm-construct}, the problem
\textbf{RIOVSPT$\bm{_\infty}$}$\bm{(T)}$ decomposes into two
independent subproblems. The subproblem on $T\backslash\ddot T$ is
solved by one call of \textbf{MCSPIT$\bm{_\infty}$} at the same target
$D$ (Theorem~\ref{thm-construct}). It remains to bound the subproblem
\textbf{RIOVSPT$\bm{_\infty}$}$\bm{(\ddot T)}$.

Three cases are distinguished. The middle case
$\Delta\hat l(\chi_{t_0})\leq D^0\leq\Delta\hat u(\chi_{t_0})$ and the
upgrading case $\Delta\hat u(\chi_{t_0})<D^0$ are solved in closed form
(Theorems~\ref{thm-case-mid} and~\ref{thm-opt-up}), each in $O(n\log n)$
time. The hard case $\Delta\hat l(\chi_{t_0})>D^0$ takes $O(n\log n)$
time by Theorem~\ref{thm-hardtime}. Algorithm~\ref{alg-IOVSPTinfty}
combines the three cases.

The total cost is dominated by three parts. The feasibility test costs
$O(n)$ time by prefix sums. The two calls of
\textbf{MCSPIT$\bm{_\infty}$} cost $O(n\log n)$ arithmetic operations.
Among the three cases, the middle one is a closed-form evaluation in
$O(n)$ time, the upgrading one sorts $O(n)$ values and binary-searches
with $O(1)$-time evaluations, and the final extension costs $O(n)$
time. Altogether, the problem is solved in $O(n\log n)$ arithmetic
operations.\hb
\end{proof}

\subsection{A direct budget construction valid in all cases}\label{subsec-uniform}
The characterization in Theorem~\ref{thm-Bfea} depends only on the data
$(\ddot T,w,l,u,c,D,t_{0})$: it uses neither the initial vector
$\hat w$ nor the case division. Together with the closed form
(\ref{eq-closedform}), it determines the optimal value $B^{*}$ in
$O(n\log n)$ time in every case. We nevertheless kept the three-case
development of the previous subsections, because those cases yield
explicit, closed-form optimal solutions and display the dependence on
the problem \textbf{$MCSPIT_{\infty}$}: the optimal value stays
$C(\hat w)$ in the first case and equals the critical value
$\lambda^{*}$ in the second, while in the hard case it equals
$B^{*}=C(\hat w)+M^{*}\ge C(\hat w)$, with an optimal solution produced
by Algorithm~\ref{alg-SPIT0}. In this section we push the same
observation further: applied to the full tree $T$, the margin test
yields a single algorithm that covers all cases at once.

We first use the full tree $T$ and all leaves $Y$ in (\ref{eq-trunc}) and (\ref{eq-margins}), and denote the resulting margin family by $\mathcal G_T$. Define
\[
\mathcal B_T=\{0\}\cup\{c(e)(w(e)-l(e)),\ c(e)(u(e)-w(e)):e\in E\}.
\]
Test the original feasibility conditions of Theorem~\ref{thm-fea=RIOVSPT_infty} first. For a feasible instance, if all margins are nonnegative at zero then $w$ itself is optimal. Otherwise zero is unattainable and $\max\mathcal B_T$ is attainable. Thus there are consecutive distinct breakpoints $a<b$ with $a$ unattainable and $b$ attainable, and Lemma~\ref{lem-affine} gives the full-tree optimum
\[
B_T^*=\max_{g\in\mathcal G_T:\,g(a)<0}
\left(a+(b-a)\frac{-g(a)}{g(b)-g(a)}\right).
\]
Once $B_{T}^{*}$ is computed, applying the constructive part of
Theorem~\ref{thm-fea=RIOVSPT_infty} with $(l_{B_{T}^{*}},u_{B_{T}^{*}})$
in place of $(l,u)$ yields an optimal vector.
Algorithm~\ref{alg-uniform} below implements this construction directly.

\begin{breakablealgorithm}
\renewcommand{\thealgorithm}{3}
\caption{Direct solver.$[\tilde  w,C(\tilde   w)]:=\textbf{RIOVSPT}_\infty(T, Y, w, l, u, c, D,t_0)$}
\label{alg-uniform}
\begin{algorithmic}[1]
\REQUIRE  The tree $T(V,E)$ rooted at $v_1$, the set $Y$ of leaf nodes; three weight vectors $w,l,u$ and the cost vector $c$; the given leaf node $t_0$ and $D$.
		\ENSURE  An optimal solution $\tilde w$  and the optimal value $C(\tilde w)$.
\STATE Mark $P_0$, compute the merge node $v^{*}(t_{k})$ of every
$t_{k}\in Y$.
\IF {$D<l(P_0)$ or $ D>\min_{t_k\in Y}u(P_k)$ or $l(P_0)>\min_{t_k\in Y}\big(l(P_k\cap P_0)+u(P_k\backslash P_0)\big)$}
		\RETURN ``The problem is infeasible." 
		\ENDIF
\STATE Evaluate every margin in $\mathcal G_T$ at $B=0$.
\IF{all these margins are nonnegative}
    \RETURN $(w,0)$.
\ENDIF
\STATE Sort and deduplicate the breakpoint set $\mathcal{B}_{T}$ as
$\beta_{0}:=0<\beta_{1}<\cdots<\beta_{q}$. 
\STATE Let $a:=0$ and $b:=q$. 
\WHILE{$b-a>1$}
    \STATE $k:=\lfloor(a+b)/2\rfloor$ and evaluate all margins
in $\mathcal G_T$ at $B=\beta_k$ by a depth-first traversal. 
    \IF{all these margins are nonnegative}
        \STATE $b:=k$.
    \ELSE
        \STATE $a:=k$.
    \ENDIF
\ENDWHILE
\STATE Evaluate all margins at $\beta_{a}$ and $\beta_{b}$, and compute
$B_{T}^{*}$ by the closed form (\ref{eq-closedform}). 
\STATE Set $x(e):=u_{B_{T}^{*}}(e)$ for $e\notin P_{0}$, and
$x(e):=l_{B_{T}^{*}}(e)$ for $e\in P_{0}$.
\STATE $\Delta:=D-l_{B_{T}^{*}}(P_{0})$.
\FOR{each edge $e\in P_{0}$ from the root to $t_{0}$}
    \STATE $\delta:=\min\bigl\{\Delta,\,u_{B_{T}^{*}}(e)-l_{B_{T}^{*}}(e)\bigr\}$;
    set $x(e):=l_{B_{T}^{*}}(e)+\delta$ and $\Delta:=\Delta-\delta$.
\ENDFOR
\RETURN $(x,\,B_{T}^{*})$.

\end{algorithmic}
\end{breakablealgorithm}
\begin{lem}\label{lem-construct}
Let $B\ge 0$ be such that all margins in $\mathcal{G}_{T}$ are
nonnegative. Then the vector $x$ produced by Lines 19--24 of
Algorithm~\ref{alg-uniform} with $B_{T}^{*}$ replaced by $B$ satisfies
\begin{equation}\label{eq-construct-range}
l_{B}(e)\le x(e)\le u_{B}(e)\quad(e\in E),
\end{equation}
$x(P_{0})=D$, and $x(P_{k})\ge D$ for every $t_{k}\in Y\setminus\{t_{0}\}$.
\end{lem}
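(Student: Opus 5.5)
The plan is to verify the three claims directly from the construction in Lines 19--24, using the margins at $B$: $g_0(B)\ge0$, $g^{(1)}_k(B)\ge0$, and $g^{(2)}_k(B)\ge0$, now over the full tree $T$ and all leaves $Y$. Write $\Delta_0:=D-l_B(P_0)$, which is nonnegative because $g_0(B)\ge0$.

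The first step is to show that the greedy fill along $P_0$ exhausts $\Delta_0$. Taking $t_k=t_0$ in $g^{(1)}_k(B)\ge0$ gives $u_B(P_0)\ge D$, so the total capacity $u_B(P_0)-l_B(P_0)$ is at least $\Delta_0$. Hence, after the loop, the residual $\Delta$ equals $0$.

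The bounds (\ref{eq-construct-range}) then follow at once. Off $P_0$ we have $x=u_B$. On $P_0$ we have $x=l_B+\delta$ with $0\le\delta\le u_B-l_B$, and $\delta\ge0$ because $\Delta$ stays nonnegative. The loop adds a total of $\Delta_0$, so $x(P_0)=l_B(P_0)+\Delta_0=D$.

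The structure of $x$ on $P_0$ is the useful part. The fill proceeds from the root, so there is an edge $e_b=(v_a,v_b)$ with $x=u_B$ on $P_{v_1,v_a}$, an intermediate value on $e_b$, and $x=l_B$ on $P_{v_b,t_0}$. The consequence I will use is that for every node $v$ on $P_0$,
\[
x(P_{v_1,v})=\min\bigl\{u_B(P_{v_1,v}),\,D-l_B(P_{v,t_0})\bigr\}.
\]
Equivalently, the prefix equals $u_B(P_{v_1,v})$ whenever $v$ lies no deeper than $v_a$. Otherwise the suffix equals $l_B(P_{v,t_0})$ and $x(P_{v,t_0})=l_B(P_{v,t_0})$, so $x(P_{v_1,v})=D-l_B(P_{v,t_0})$. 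I will state and check this min formula explicitly, since it is just the greedy prefix/suffix structure, in the same spirit as the proof of Lemma~\ref{lem-Vprop}(2).

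For a side leaf $t_k\ne t_0$ with merge node $v^*$, write
\[
x(P_k)=x(P_{v_1,v^*})+u_B(P_k\setminus P_0).
\]
Suppose first that $x(P_{v_1,v^*})=u_B(P_{v_1,v^*})$. Then $x(P_k)=u_B(P_k)\ge D$ by $g^{(1)}_k(B)\ge0$. Otherwise $x(P_{v_1,v^*})=D-l_B(P_{v^*,t_0})=D-l_B(P_0\setminus P_k)$. In that case $x(P_k)=D+u_B(P_k\setminus P_0)-l_B(P_0\setminus P_k)\ge D$ by $g^{(2)}_k(B)\ge0$. Together these cover every side leaf.

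The main obstacle is stating the prefix formula cleanly at the partial edge $e_b$. If $v^*$ lies strictly inside $P_{v_a,v_b}$, which cannot happen because $v^*$ is a node, or if $v^*=v_a$ or $v^*=v_b$, the formula must still hold. I will handle this by noting that $x$ restricted to $P_0$ is the lexicographically front-loaded vector, so that each prefix sum equals the minimum of the two expressions above. This can be checked separately for $v$ at or above $v_a$ and for $v$ at or below $v_b$.
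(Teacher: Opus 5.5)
Your proposal is correct and follows essentially the paper's proof. Your prefix formula $x(P_{v_1,v})=\min\{u_B(P_{v_1,v}),\,D-l_B(P_{v,t_0})\}$ is algebraically the same as the paper's identity $x(Q)=L(Q)+\min\{\Delta_0,U(Q)-L(Q)\}$, and your case split (saturated prefix handled by $g_k^{(1)}$, otherwise by $g_k^{(2)}$) is the paper's; the paper simply states the identity without naming a partial edge $e_b$, which covers the degenerate cases (zero filling, full saturation, zero-width edges) automatically.
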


\begin{proof}
Write $\Delta:=D-l_{B}(P_{0})$ and $L:=l_{B}$, $U:=u_{B}$ for short.
Nonnegativity of $g_{0}$ gives $\Delta\ge 0$, and nonnegativity of
$g_{t_{0}}^{(1)}$ gives $\Delta\le U(P_{0})-L(P_{0})$. Hence the filling
along $P_0$ never gets stuck: each edge receives at most its width
$U(e)-L(e)$, and the loop ends with $\Delta$ exactly exhausted, i.e.,
$x(P_{0})=D$. Also (\ref{eq-construct-range}) holds by construction.

It remains to check the side paths. Fix $t_{k}\in Y\setminus\{t_{0}\}$
and let $v^{*}$ be its merge node. Since edges outside $P_{0}$ are set
to their upper bounds,
$x(P_{k})=x(P_{v_{1},v^{*}})+U(P_{k}\setminus P_{0})$.
Put $Q=P_{v_1,v^*}$ and let $\Delta_0=D-L(P_0)$ be the initial
amount to be filled. Root-first filling gives the prefix identity
\[
x(Q)=L(Q)+\min\{\Delta_0,U(Q)-L(Q)\}.
\]
If $\Delta_0\ge U(Q)-L(Q)$, then $x(Q)=U(Q)$ and
$g_k^{(1)}(B)\ge0$ gives $x(P_k)\ge D$.
If $\Delta_0\le U(Q)-L(Q)$, then
\[
x(Q)=L(Q)+\Delta_0=D-L(P_0\setminus P_k)
\ge D-U(P_k\setminus P_0)
\]
by $g_k^{(2)}(B)\ge0$. Thus the side-path constraint holds in both
cases, including equality, zero-width edges, and zero total filling;
no partially filled edge needs to exist.\hb
\end{proof}
\begin{thm}\label{thm-uniform}
Algorithm~\ref{alg-uniform} solves $\mathrm{RIOVSPT}_{\infty}$ exactly in 
  $O(n\log n)$ time. 
\end{thm}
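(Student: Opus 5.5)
The proof splits into correctness and running time, and correctness in turn into the value $B_T^*$ and the vector $x$.

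\textbf{Correctness of the value.} The margin characterization of Theorem~\ref{thm-Bfea} and its supporting lemmas were stated on $\ddot T$, but they carry over verbatim to the full tree $T$ with the family $\mathcal G_T$ and breakpoints $\mathcal B_T$. Lemma~\ref{lem-budget} is edgewise. Theorem~\ref{thm-fea=RIOVSPT_infty} is stated for an arbitrary tree. The monotonicity, continuity and piecewise-affinity argument of Lemma~\ref{lem-monotone} uses only Remark~\ref{rem:kinks}. Hence, on $T$:
\begin{enumerate}
\item a budget $B$ is attainable iff $g(B)\ge0$ for every $g\in\mathcal G_T$;
\item the attainable budgets form a closed ray $[B_T^*,+\infty)$;
\item the optimal value equals $B_T^*$, by the argument of Theorem~\ref{thm-optval}, which never used $\hat w$.
\end{enumerate}

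The algorithm then covers three situations.
\begin{itemize}
\item If the feasibility test of Lines 2--4 fails, Corollary~\ref{cor-fea=RIOVSPT1} justifies the infeasibility report.
\item If all margins are nonnegative at $B=0$, then $0$ is attainable. At $B=0$ we have $l_0=u_0=w$, so the only vector allowed by the truncated bounds is $w$ itself. Thus $w$ is feasible with cost $0$ and returning $(w,0)$ is optimal.
\item Otherwise $\beta_0=0$ is unattainable. Since $\max\mathcal B_T$ is attainable (argued as in Lemma~\ref{lem-monotone}), the binary search maintains the invariant ``$\beta_a$ unattainable, $\beta_b$ attainable.'' It ends with consecutive breakpoints, and Lemma~\ref{lem-affine} (again valid on $T$) gives $B_T^*$ exactly.
\end{itemize}

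\textbf{Correctness of the vector.} Apply Lemma~\ref{lem-construct} with $B=B_T^*$; all margins are nonnegative there because $B_T^*$ is attainable. The returned $x$ satisfies $l_{B_T^*}\le x\le u_{B_T^*}$, $x(P_0)=D$, and $x(P_k)\ge D$ for every $t_k\ne t_0$. By Lemma~\ref{lem-budget}, $x$ respects the bounds $l,u$ and has $C(x)\le B_T^*$. So $x$ is feasible with cost at most the optimum, hence optimal with $C(x)=B_T^*$.

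\textbf{Running time.}
\begin{itemize}
\item One depth-first traversal marks $P_0$ and computes the merge nodes and depths in $O(n)$. The feasibility test costs $O(n)$ with prefix sums.
\item Building, sorting and deduplicating $\mathcal B_T$ ($|\mathcal B_T|\le 2n-1$) costs $O(n\log n)$.
\item Each margin evaluation at a fixed $B$ is one traversal accumulating $\ell_B(v)$ and $\mu_B(v)$. Every margin is then a difference of prefix values, exactly as in part (2) of Theorem~\ref{thm-hardtime}, so each test is $O(n)$. The $O(\log n)$ binary-search steps give $O(n\log n)$.
\item The closed form (\ref{eq-closedform}) and Lines 19--24 are $O(n)$.
\end{itemize}

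\textbf{Main obstacle.} The delicate point is the transfer from $\ddot T$ to $T$. The margin family on $T$ includes leaves outside $\ddot T$; for these, $P_k\cap P_0$ is just the root, so $v^*(t_k)=v_1$. I must check that $g^{(2)}_k$ and the prefix formulas remain correct with this degenerate merge node. I must also check that Lemma~\ref{lem-construct} already handles such leaves: there $Q$ is empty, so its case analysis reduces to $g_k^{(1)}\ge0$. I would verify these two points explicitly and then cite the earlier arguments.
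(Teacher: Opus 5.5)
Your proposal is correct and follows the paper's proof essentially step for step. You transfer Theorem~\ref{thm-Bfea}, Lemma~\ref{lem-monotone} and Lemma~\ref{lem-affine} to the full tree to identify $B_T^*$ as the smallest attainable budget. You then invoke Lemma~\ref{lem-construct} at $B=B_T^*$ to obtain a feasible vector of cost at most $B_T^*$, and bound the running time by sorting plus $O(\log n)$ linear-time margin tests. Your additional checks are valid details the paper leaves implicit: the zero-budget exit forcing $x=w$ because $l_0=u_0=w$, the binary-search invariant, and the degenerate merge node $v^*(t_k)=v_1$ for leaves outside $\ddot T$.
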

\begin{proof}
We first verify correctness. Theorem~\ref{thm-Bfea} applies verbatim to
the full tree, since its proof uses only
Theorem~\ref{thm-fea=RIOVSPT_infty}. Hence the infeasibility exit and
the zero-budget exit are valid, and for every remaining instance the
computed $B_{T}^{*}$ is the smallest attainable budget
(Lemmas~\ref{lem-monotone} and~\ref{lem-affine}, full-tree versions).
By Lemma~\ref{lem-construct} with $B=B_{T}^{*}$, the returned vector
$x$ respects the truncated bounds, meets every path constraint, and
satisfies $x(P_{0})=D$; its cost is therefore at most $B_{T}^{*}$.
Optimality follows: any smaller cost would be an attainable budget
below $B_{T}^{*}$, contradicting the minimality of $B_{T}^{*}$.

For the running time, sorting $O(n)$ breakpoints and running $O(\log n)$
linear-time margin tests take $O(n\log n)$ operations altogether, and
the endpoint evaluation and the final construction are both $O(n)$.
Hence the total is $O(n\log n)$.\hb
\end{proof}

 \section{Computational experiments}\label{sec:computational-study}
We implemented Algorithms~1--3 and tested them against two
general-purpose linear programming (LP) solvers, the open-source solver
HiGHS and the commercial solver Gurobi. The experiments address two
questions: whether the algorithms return the correct optimal value, and
how their running time compares with that of the LP solvers. The code,
the instance generators and all raw results are publicly available at
\url{https://github.com/xiaoli2576/RIOVSPT-infty-code}.

\subsection{LP baseline and test instances}\label{subsec:instances}
\smallskip\noindent\emph{LP baseline.} Since $\tilde w(P_k)$ is the distance from the
root $v_1$ to the leaf $t_k$, the problem \textbf{RIOVSPT$\bm{_\infty}(T)$}
in (\ref{eq-RIOVSPTinfty}) can be written as an LP with a distance
variable $\pi(v)$ for every node $v$. For each edge $e=(p_e,v_e)$, where
$p_e$ is the end closer to the root,
\begin{eqnarray}
&\min& z \nonumber\\
&s.t.& \pi(v_1)=0,\quad \pi(t_0)=D,\quad \pi(t_k)\ge D,\ t_k\in Y\setminus\{t_0\}, \nonumber\\
&& \pi(v_e)-\pi(p_e)=\tilde w(e),\quad l(e)\le\tilde w(e)\le u(e),\ e\in E, \label{eq-epiLP}\\
&& -z\le c(e)\big(\tilde w(e)-w(e)\big)\le z,\ e\in E. \nonumber
\end{eqnarray}
The LP has $O(n)$ variables and constraints and uses none of the results
of Section~2, so it provides an independent check. We accept the output of
an algorithm when both the algorithm and the LP solvers report
infeasibility, or when its solution satisfies all constraints of
(\ref{eq-RIOVSPTinfty}) and its objective value agrees with the LP optimum,
both up to $2\times10^{-6}$.

\smallskip\noindent\emph{Test trees.} We use five tree families
(Fig.~\ref{fig:tree-families}). A \emph{chain} is a single path. In a
\emph{star}, every leaf is a child of the root. A \emph{balanced} tree is a
complete binary tree. A \emph{comb} consists of a long path, the spine,
with one leaf attached to each spine node. A \emph{random} tree is grown by
attaching each new node to an existing node chosen uniformly at random. The
families range from shallow (star, balanced) to deep (chain, comb) and from
one leaf (chain) to $n-1$ leaves (star).

\begin{figure}[t]\centering
\includegraphics[width=\linewidth]{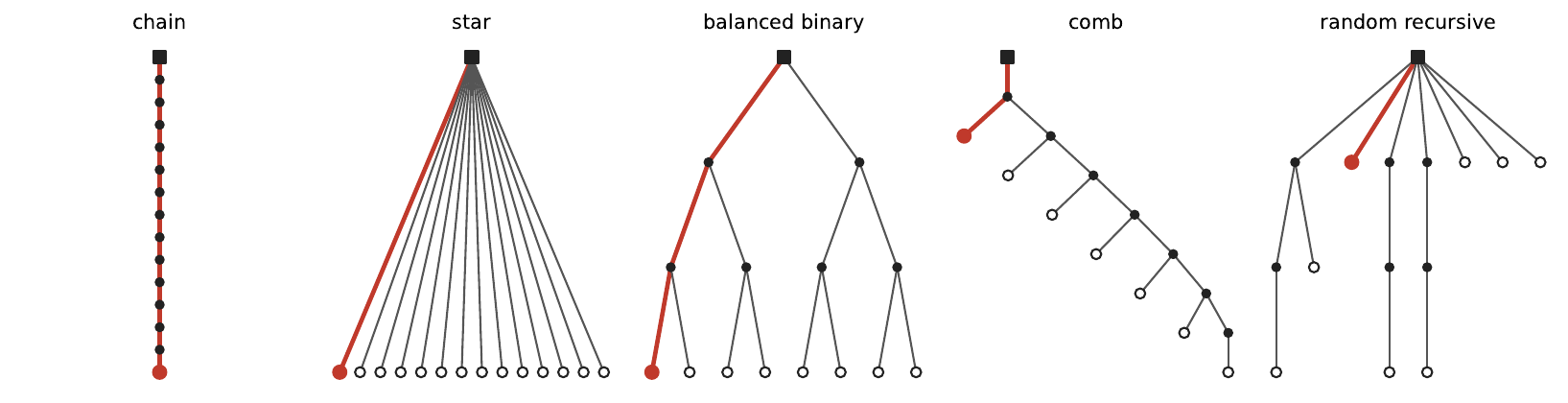}
\caption{The five tree families, shown with 14 edges. The square is the
root, open circles are leaves, and the thick path leads to the target leaf
$t_0$ used in the timing experiments.}\label{fig:tree-families}
\end{figure}

\smallskip\noindent\emph{Edge data.} On each tree, the weights $w$, the bounds $l,u$, the
costs $c$, the target leaf $t_0$ and the length $D$ are drawn at random.
The \emph{correctness} instances are not screened, so both feasible and
infeasible instances occur. The \emph{timing} instances are drawn so that
the problem is feasible, with $t_0$ the shallowest leaf and $D$ close to
$w(P_0)$; both LP solvers confirmed feasibility. The generators and all
instances are included with the code.

\subsection{Correctness}
The correctness set contains 400 random instances (the five families with
1, 5, 20 and 100 edges, 20 instances each) and 13 hand-made instances
covering corner cases such as a single-node tree, edges with $l(e)=u(e)$,
and every branch of Algorithm~2. Of these 413 instances, 90 are feasible
and 323 are infeasible. Algorithms~2 and~3 agreed with both LP solvers on
all of them. They also agreed on all 186 timing instances of the next
subsections (on the 15 largest ones, only Gurobi was used). Over all 599
instances, the largest difference between an algorithm's optimal value and
the LP optimum was $3.1\times10^{-13}$. Algorithm~1 was checked separately
on 24 trees, each at four budgets $M$; its value agreed with the
corresponding LP in all 96 cases.

\subsection{Running time}
The main timing set has 75 instances: the five families with 100, 1,000
and 10,000 edges and five instances each. A further 15 instances have
100,000 edges. All methods use one thread on an Intel Core i9-14900HX, and
each instance is solved three times and the median time is reported. The
times of the LP solvers include building the model.
Table~\ref{tab:times} and Fig.~\ref{fig:runtime} summarize the results.

\begin{table}[t]
\centering\small
\caption{Running time in seconds (median over instances). HiGHS was not run
on the 100,000-edge trees, and Gurobi was run once per instance there.}
\label{tab:times}
\begin{tabular}{lrrrrr}\hline
Family & Edges & Algorithm 2 & Algorithm 3 & HiGHS & Gurobi \\\hline
chain    & 10,000  & 0.100 & 0.093 & 5.37  & 0.714 \\
star     & 10,000  & 0.067 & 0.121 & 0.313 & 0.299 \\
balanced & 10,000  & 0.119 & 0.109 & 0.617 & 0.477 \\
comb     & 10,000  & 0.115 & 0.104 & 6.34  & 0.704 \\
random   & 10,000  & 0.030 & 0.105 & 0.451 & 0.386 \\\hline
chain    & 100,000 & 0.834 & 0.813 & --    & 43.6  \\
star     & 100,000 & 0.477 & 1.01  & --    & 2.15  \\
balanced & 100,000 & 0.881 & 0.880 & --    & 4.59  \\
comb     & 100,000 & 0.978 & 0.851 & --    & 284   \\
random   & 100,000 & 0.460 & 0.917 & --    & 6.17  \\
\hline\end{tabular}
\end{table}

\begin{figure}[t]\centering
\includegraphics[width=\linewidth]{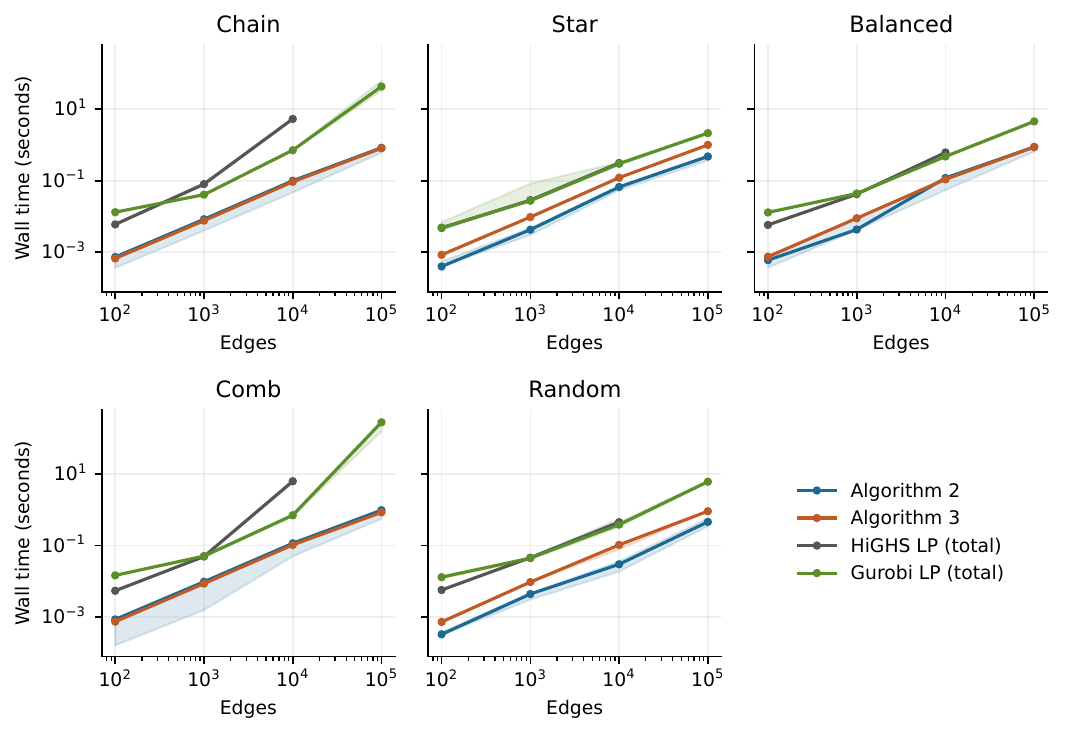}
\caption{Running time against the number of edges (log--log scale). Lines
show medians and shaded bands the interquartile range over
instances.}\label{fig:runtime}
\end{figure}

Both algorithms were faster than both LP solvers on all 90 instances. On
the 75 main instances, Gurobi took a median of 10.6 times as long as
Algorithm~2 and 5.9 times as long as Algorithm~3; HiGHS was similar. The gap
is largest on deep trees: with 100,000 edges, Gurobi needed 43.6 seconds on
chains and 284 seconds on combs, whereas neither algorithm took more than
1.2 seconds on any tree. The running times of both algorithms grow almost
linearly with $n$ on every family (Fig.~\ref{fig:runtime}), as expected from
the $O(n\log n)$ bound. Part of the gap comes from building the LP model; if
only Gurobi's solve time is counted, Gurobi was faster than Algorithm~3 on
11 star trees with at least 1,000 edges and faster than Algorithm~2 on one
star tree with 100,000 edges. 

Algorithm~2 was faster than Algorithm~3 on 60 of the 90 instances. The
next experiment shows that the difference depends on the branch of
Algorithm~2 that the instance enters.

\subsection{Sensitivity and branch-specific behavior}
\smallskip\noindent\emph{Sensitivity.} On random trees with 1,000 edges, we varied one
factor at a time: the spread of the costs $c$, the width of the bounds
$[l,u]$, and the depth of the target leaf $t_0$ (60 instances in total,
Fig.~\ref{fig:sensitivity}). None of these factors changed the running
time much. Only equal costs made both algorithms somewhat faster. Gurobi
remained between 2.9 and 12 times slower than either algorithm.

\begin{figure}[t]\centering
\includegraphics[width=\linewidth]{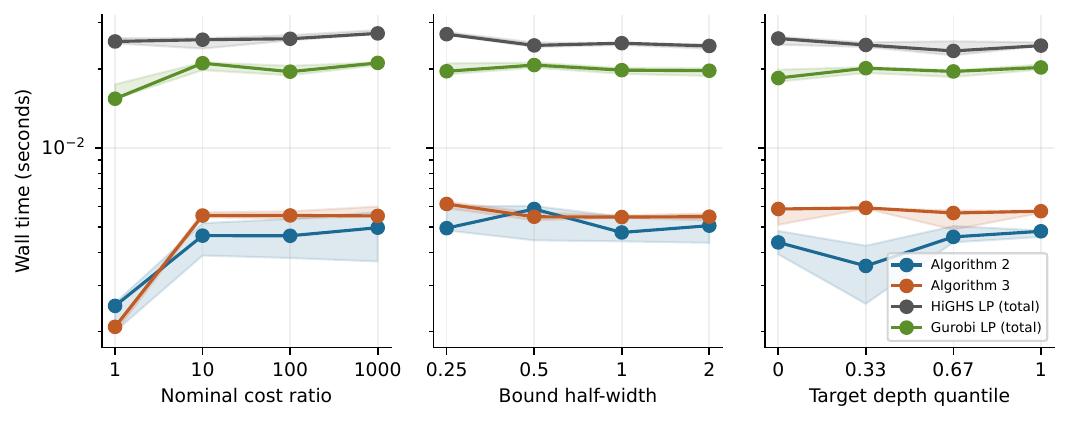}
\caption{Running time on random trees with 1,000 edges when one factor is
varied. Lines show medians and shaded bands the interquartile range over
five instances.}\label{fig:sensitivity}
\end{figure}

\smallskip\noindent\emph{Branches of Algorithm~2.} We built nine instances (three sizes
up to 10,000 edges) for each branch of Algorithm~2, using a small template
that forces the branch. Every instance entered the intended branch, and
both algorithms agreed with the LP solvers. Table~\ref{tab:branches}
compares the two algorithms. Algorithm~2 was faster in the middle, upward
and hard-zero cases, where it stops before the parametric search. In the
hard case with $V(0)>D$, Algorithm~2 also needs a budget search, in
addition to its own preprocessing, and Algorithm~3 was faster on all nine
instances.

\begin{table}[t]
\centering\small
\caption{Algorithm~2 versus Algorithm~3 on instances that force each branch
of Algorithm~2 (nine instances per branch); $t_2$ and $t_3$ are the
running times of Algorithms~2 and~3.}\label{tab:branches}
\begin{tabular}{lcc}\hline
Branch & Faster & Median $t_3/t_2$ \\\hline
middle ($\Delta\hat l(\chi_{t_0})\le D^0\le\Delta\hat u(\chi_{t_0})$) & Alg.~2 on 9/9 & 1.92 \\
upward ($\Delta\hat u(\chi_{t_0})<D^0$) & Alg.~2 on 9/9 & 1.88 \\
hard, $V(0)\le D$ & Alg.~2 on 9/9 & 1.62 \\
hard, $V(0)>D$ & Alg.~3 on 9/9 & 0.87 \\
\hline\end{tabular}
\end{table}

\section{Conclusion}\label{sec:conclusion}
We studied the restricted inverse optimal value problem on shortest
root--leaf paths under the weighted $l_\infty$ norm on trees. The prescribed
path must have length $D$ and remain a shortest root--leaf path, while the
maximum weighted edge adjustment is minimized. The feasibility conditions
lead to two $O(n\log n)$ algorithms. The first uses a decomposition and a
three-case analysis relative to an auxiliary interdiction solution; its
hard case is solved through a parametric greedy subproblem. The second
applies the budget-dependent feasibility margins directly to the full
tree. Their continuity, monotonicity, and affine behavior between
edge-bound breakpoints allow the optimal budget to be found by binary
search followed by an exact threshold calculation.

In the experiments, both algorithms returned the same optimal value as
the LP solvers HiGHS and Gurobi on all 599 test instances, and they were
faster than both solvers on all timing instances, typically by a factor
of 5 to 10 and by more than an order of magnitude on large deep trees. Neither algorithm dominates the other:
Algorithm~2 is faster when the instance is settled before the hard case,
and Algorithm~3 is faster in the hard case with $V(0)>D$.

Several questions remain open. First, this work is confined to trees. It would be interesting to find wider graph classes that still admit polynomial-time algorithms, such as series-parallel graphs or bounded-treewidth networks. Second, both of our algorithms run in $O(n\log n)$ time. Whether a linear-time algorithm exists remains open.  Third, the model admits several variants. For example, one may relax the exact length requirement on $P_0$ to an interval, prescribe several paths at once, or consider other underlying problems such as minimum spanning trees.

\section*{Statements and Declarations}

\noindent\textbf{Funding.} The work of Q.~Zhang was supported by the
National Natural Science Foundation of China (Grant No.~1230012046). The
work of X.~Guan was supported by the National Natural Science Foundation
of China (Grant No.~62573120). The work of P.M.~Pardalos was conducted
within the framework of the Basic Research Program at the National
Research University Higher School of Economics (HSE).

\smallskip\noindent\textbf{Competing interests.} The authors have no
competing interests to declare that are relevant to the content of this
article.

\smallskip\noindent\textbf{Data and code availability.} All test
instances are generated deterministically by the code accompanying this
paper. The implementation of Algorithms~1--3, the LP baseline, the
instance generators and the raw results of all experiments are publicly
available at \url{https://github.com/xiaoli2576/RIOVSPT-infty-code}.


\begin{thebibliography}{00}

\bibitem{BT-MP-ISP-1992}
Burton, D., Toint, P.L.: On an instance of the inverse shortest paths problem. Mathematical Programming \textbf{53}(1--3), 45--61 (1992)

\bibitem{BT-MP-ISP-1994}
Burton, D., Toint, P.L.: On the use of an inverse shortest paths algorithm for recovering linearly correlated costs. Mathematical Programming \textbf{63}(1), 1--22 (1994)

\bibitem{ZM-ZFOR-ISP-1995}
Zhang, J., Ma, Z., Yang, C.: A column generation method for inverse shortest path problems. ZOR -- Mathematical Methods of Operations Research \textbf{41}(3), 347--358 (1995)

\bibitem{XZ-JJIAM-WSP}
Xu, S., Zhang, J.: An inverse problem of the weighted shortest path problem. Japan Journal of Industrial and Applied Mathematics \textbf{12}(1), 47--59 (1995)

\bibitem{ZW-JHU-ISPH}
Zhang, B., Wang, Q.: Inverse shortest path problems under Hamming distance. Journal of Hohai University (Natural Sciences) \textbf{36}(4), 571--574 (2008)

\bibitem{CH-N-ISPL}
Cui, T., Hochbaum, D.S.: Complexity of some inverse shortest path lengths problems. Networks \textbf{56}(1), 20--29 (2010)

\bibitem{AG-MP-IOVP}
Ahmed, S., Guan, Y.: The inverse optimal value problem. Mathematical Programming \textbf{102}(1), 91--110 (2005)

\bibitem{Lv-JCAM-IOVP}
Lv, Y., Hu, T., Wan, Z.: A penalty function method for solving inverse optimal value problem. Journal of Computational and Applied Mathematics \textbf{220}(1--2), 175--180 (2008)

\bibitem{ZhangQ22}
Zhang, Q., Guan, X., Jia, J., Qian, X., Pardalos, P.M.: The restricted inverse optimal value problem on shortest path under $l_1$ norm on trees. Journal of Global Optimization \textbf{86}(1), 251--284 (2023)


\bibitem{ZGP-LBIOVMST-JGO}
Zhang, B., Guan, X., Pardalos, P.M., Wang, H., Zhang, Q., Liu, Y., Chen, S.: The lower bounded inverse optimal value problem on minimum spanning tree under unit $l_{\infty}$ norm. Journal of Global Optimization \textbf{79}(3), 757--777 (2021)

\bibitem{ZGZ-IOVMST-OL}
Zhang, B., Guan, X., Zhang, Q.: Inverse optimal value problem on minimum spanning tree under unit $l_{\infty}$ norm. Optimization Letters \textbf{14}(8), 2301--2322 (2020)

\bibitem{WGZ-CIOVMST-JOCO}
Wang, H., Guan, X., Zhang, Q., Zhang, B.: Capacitated inverse optimal value problem on minimum spanning tree under bottleneck Hamming distance. Journal of Combinatorial Optimization \textbf{41}(4), 861--887 (2021)

\bibitem{Jia2023}
Jia, J., Guan, X., Wang, H., Zhang, B., Pardalos, P.M.: Combinatorial algorithms for solving the restricted bounded inverse optimal value problem on minimum spanning tree under weighted $l_\infty$ norm. Journal of Computational and Applied Mathematics \textbf{419}, 114754 (2023)

\bibitem{Jia2024}
Jia, J., Guan, X., Zhang, B., Qian, X., Pardalos, P.M.: Combinatorial algorithms for restricted inverse optimal value problems on minimum spanning tree under weighted $l_1$ norm. Journal of Computational and Applied Mathematics \textbf{451}, 116110 (2024)

\bibitem{Jia2026}
Jia, J., Guan, X., Pardalos, P.M.: Restricted inverse optimal value problem on linear programming under weighted $l_1$ norm. Journal of Computational and Applied Mathematics \textbf{486}, 117687 (2026)

\bibitem{PT-ANV-BT}
Paleologo, G., Takriti, S.: Bandwidth trading: a new market looking for help from the OR community. AIRO News \textbf{VI}(3), 1--4 (2001)

\bibitem{ZhangQ20}
Zhang, Q., Guan, X., Pardalos, P.M.: Maximum shortest path interdiction problem by upgrading edges on trees under weighted $l_1$ norm. Journal of Global Optimization \textbf{79}(4), 959--987 (2021)

\bibitem{ZhangQ21}
Zhang, Q., Guan, X., Wang, H., Pardalos, P.M.: Maximum shortest path interdiction problem by upgrading edges on trees under Hamming distance. Optimization Letters \textbf{15}(8), 2661--2680 (2021)

\bibitem{YiL22}
Lei, Y., Shao, H., Wu, T., Liu, P.: An accelerating algorithm for maximum shortest path interdiction problem by upgrading edges on trees under unit Hamming distance. Optimization Letters \textbf{17}(2), 453--469 (2023)

\bibitem{ZhangL2025}
Zhang, Q., Li, X.: Minimum-cost shortest-path interdiction problem involving upgrading edges on trees with weighted $l_\infty$ norm. Mathematics \textbf{13}(19), 3219 (2025)

\bibitem{ZGJQ22}
Zhang, Q., Guan, X., Jia, J., Qian, X.: The sum of root-leaf distance interdiction problem by upgrading edges/nodes on trees. Journal of Combinatorial Optimization \textbf{44}(1), 74--93 (2022)

\bibitem{Li2024}
Li, X., Guan, X., Zhang, Q., Yin, X., Pardalos, P.M.: The sum of root-leaf distance interdiction problem with cardinality constraint by upgrading edges on trees. Journal of Combinatorial Optimization \textbf{48}(5), 39 (2024)

\bibitem{Li2025}
Li, X., Guan, X., Jia, J., Pardalos, P.M.: Double interdiction problem on trees on the sum of root-leaf distances by upgrading edges. Journal of Global Optimization \textbf{92}(4), 951--972 (2025)

\bibitem{Xiao2025}
Guan, X., Pardalos, P.M., Zhang, B.: Sum of root-leaf distance interdiction problems on trees. In: Inverse Combinatorial Optimization Problems. Springer Optimization and Its Applications, vol. 225, pp. 171--201. Springer, Cham (2025). \url{https://doi.org/10.1007/978-3-031-91175-0_7}


\end{thebibliography}
    \end{document}